\documentclass[11pt,reqno]{amsart}

\usepackage{amsmath,amsfonts,amssymb,mathrsfs,stackengine}
\usepackage{amssymb,mathrsfs,graphicx,extpfeil}
\usepackage{amsmath,amsfonts,amssymb,amscd,amsthm,bbm}
\usepackage{epsfig}

\usepackage{graphicx,colortbl,here}
\usepackage{extpfeil}
\usepackage{subcaption}
\usepackage{caption}
\usepackage{kotex}
\usepackage{hyperref}
 \usepackage {cite}
\newcommand{\bbr}{\mathbb R}

\newcommand*\di{\mathop{}\!\mathrm{d}}

\newcommand{\opnorm}[1]{{ \vert\kern-0.25ex \vert\kern-0.25ex \vert #1 
   \vert\kern-0.25ex \vert\kern-0.25ex \vert}}

\newtheorem{theorem}{Theorem}[section]
\newtheorem{lemma}{Lemma}[section]

\newtheorem{proposition}{Proposition}[section]
\newtheorem{remark}{Remark}[section]
\newtheorem{definition}{Definition}[section]

\hypersetup{
	hyperindex=true
}
\hypersetup{hidelinks}
\allowdisplaybreaks[4]
\begin{document}

\title[Stability and random mean-field limit for KCS model]{Weak stability and random mean-field limit of the phase-spatially extended  kinetic Cucker--Smale model} 

\author[Ha]{Seung-Yeal Ha}
\address[Seung-Yeal Ha]{\newline Department of Mathematical Sciences and Research Institute of Mathematics, \newline
	Seoul National University, Seoul, 08826, Republic of Korea}
\email{syha@snu.ac.kr}

\author[Wang]{Xinyu Wang$^{*}$}
\address[Xinyu Wang]{\newline Department of Mathematical Sciences, \newline
	Seoul National University, Seoul, 08826, Republic of Korea
\newline School of Mathematics \newline Harbin Institute of Technology, Harbin  150001, People's Republic of China}
\email{wangxinyu97@snu.ac.kr}

\thanks{\textbf{Acknowledgment.} 
	The work of S.-Y. Ha is supported by National Research Foundation (NRF) grant funded by the Korea government(MIST) (RS-2025-00514472), and the work of X. Wang is supported by the Natural Science Foundation of China (grants 123B2003), the China Postdoctoral Science Foundation (grants 2025M774290), and Heilongjiang Province Postdoctoral Funding (grants LBH-Z24167). $^{*}$Corresponding author.}

\begin{abstract}
We study the weak stability of the kinetic Cucker--Smale (in short, KCS) model in a phase-spatially extended setting, which can be formally derived from the infinite Cucker--Smale model in the mean-field limit. For a bounded Lipschitz communication weight function, we derive finite-time Osgood-type weak stability for measure-valued solutions with exponential velocity tails and finite spatial second moments. Unlike the phase-spatially confined setting, the solution operator to the KCS model is not Lipschitz continuous with respect to initial data. This is due to the unbounded velocity tail and the corresponding absence of a uniform Lipschitz bound for the alignment force. As an application of weak stability,  we obtain an i.i.d. sampling consequence: empirical measures generated from independent initial samples converge to the measure-value solution for the corresponding kinetic model in any finite time interval, in expectation. 
\end{abstract}

\keywords{Cucker--Smale model, kinetic Cucker--Smale equation, mean-field limit, phase-spatially extended setting, weak stability}

\subjclass[2020]{34D05, 70K20, 76D07}

\maketitle


\section{Introduction}\label{sec:1}
\setcounter{equation}{0}

The dynamics of a large Cucker--Smale (in short, CS) ensemble \cite{Cucker,cucker2} can be effectively approximated by the corresponding mean-field kinetic model; see, for instance, \cite{kinetic4,k4,k2,kinetic2,ks2,kinetic5,w4,Bolley2011,k6,ks1}. The resulting kinetic equation belongs to the class of the Vlasov--McKean type models, namely, nonlinear transport equations on the phase space with a nonlocal forcing. In most of the literature, the supports of relevant physical observables are assumed to be confined in a bounded region of the phase space $\mathbb R^d_x\times \mathbb R^d_v,$
so that the kinetic density has a compact support in both position and velocity variables \cite{k4,kinetic4,a2}. Under the compact-support assumption, the well-posedness, stability, mean-field limit, and flocking dynamics of the kinetic Cucker--Smale (in short, KCS) model have been extensively studied; see \cite{k4,k2,k1,w4} and the references therein.

Recently, finite-in-time stability and mean-field limit in spatially extended settings have been considered in which the spatial support of the kinetic density is allowed to be unbounded, while the velocity support is still assumed to be bounded; see, for instance, \cite{w6,w2,HKPZ}. Thus, most previous stability and mean-field theories for the KCS model still rely on at least one compact support assumption, either compact phase-space support or compact velocity support. In this paper, we remove both restrictions and study stability and random mean-field approximation for the KCS model in a fully noncompact phase space (or phase-spatially extended setting). 

To fix the idea, we begin with the CS model. Let \(x_i(t)\) and \(v_i(t)\) denote the position and velocity of the \(i\)-th particle in \(\mathbb R^d\) at time $t$. Then the dynamics of $(x_i, v_i)$ is governed by the Cauchy problem for the  CS model with a system size $N$:
\begin{equation}\label{A-1}
	\begin{cases}
		\displaystyle \dot x_i=v_i,\quad t>0,\quad i \in [N] := \{1,\dots,N \}, \\[2mm]
		\displaystyle \dot v_i=\frac{\kappa}{N}\sum_{j=1}^N\phi(|x_i-x_j|)(v_j-v_i), \\[2mm]
		\displaystyle (x_i, v_i) \Big|_{t = 0+} = (x_{i,0}, v_{i,0}),
	\end{cases}
\end{equation}
where \(\kappa>0\) and $| \cdot |$ are the coupling strength and the standard $\ell^2$-norm in the Euclidean space $\bbr^d$, $\bbr_+ := [0, \infty)$, respectively, and \(\phi\) is a nonnegative communication weight satisfying Lipschitz continuity, positivity, and uniform boundedness:
\[
\phi\in W^{1,\infty}(\bbr_+),\quad 0\le \phi(r) \le \|\phi\|_{L^\infty}, \quad r \geq 0.
\]
In the mean-field limit $(N \to \infty)$, the corresponding Cauchy problem for the KCS  equation reads as 
\begin{equation} 
	\begin{cases} \label{A-2}
		\displaystyle \partial_tf+ \nabla_x \cdot (v f) + \nabla_v \cdot (L[f]f)=0, \quad t > 0,~~(x, v) \in {\mathbb R}^{2d},  \vspace{6pt}\\
		\displaystyle L[f](t,x,v)=-\kappa \int_{\mathbb{R}^{2d}} \phi( |x-x_*|) \left(v-v_*\right)  f(t, x_*, v_*) \di x_* \di v_*, \\
		\displaystyle f \Big|_{t = 0}  =f_0,
	\end{cases}
\end{equation}
where $f_0$ is the initial datum satisfying positivity, boundedness, and integrability conditions:
\begin{equation} \label{A-3}
	f_0 \in(L^1 \cap L_+^\infty)(\mathbb{R}^{2d}) \quad \mbox{and} \quad  ( |x|^2 + |v|^2)f_0 \in L^1(\mathbb{R}^{2d}).
\end{equation}
Note that the global existence of weak solutions to \eqref{A-2} has already been studied in \cite{kinetic3} without uniqueness. In \cite{HWKCS2026}, the authors established the uniqueness of weak solution  under the additional requirement that initial kinetic density decays exponentially fast in the velocity variable, i.e., there exists a positive constant $\alpha>0$ such that
\begin{align*}
	\int_{\mathbb{R}^{2d}} e^{\alpha |v|} {f_0(x, v) \di x \di v}<\infty.\end{align*}
However, the stability results of the KCS model \eqref{A-2} in a fully phase-spatially extended setting are not known yet. For compactly supported initial data, the standard Dobrushin argument yields finite-in-time Lipschitz stability in Wasserstein distance \cite{kinetic4,k4,k2,k1}. Indeed, if the position and velocity supports remain to be bounded on a finite time interval, then the force field \(L[f]\) is Lipschitz on the relevant part of phase space, and usual Gr\"onwall type argument can be applied to derive a finite-time stability estimate \cite{w6,HWGKJK2026,HWRKCS2025,NP}. However, the situation is different, when the velocity support of kinetic density is unbounded. Although the communication weight is bounded and Lipschitz, the \(x\)-Lipschitz bound of the force contains a factor comparable to \(|v|+|v_*|\). Thus, no uniform Lipschitz constant is available on the full phase space. For notational simplicity, we use the abbreviated notation:
\[ z = (x,  v), \quad   \di z = \di x \di v.  \]
Before we move on further, we recall the concept of ``{\it weak solution}" to the Cauchy problem \eqref{A-2}-\eqref{A-3} and weak stability as follows. 
\begin{definition}\label{D1.1}
\emph{(Weak solution)}
For $T  \in (0,\infty]$, let $f\in {\mathcal C}([0, T); L_+^1(\mathbb{R}^{2d}))$ be a weak solution to  \eqref{A-2}  if the following relations hold:
	\begin{enumerate}
		\item
		$f$ is weakly continuous in $t$:~$\forall~\psi \in {\mathcal C}_c^1(\mathbb{R}^{2d})$, 
		\[
		t \quad \mapsto \quad \int_{\mathbb{R}^{2d}}\psi f \di z \quad\text{is continuous}.
		\]
		\item	
		$f$ satisfies \eqref{A-2} in weak sense:~$ \forall~\zeta \in {\mathcal C}_c^1([0, T) \times\mathbb{R}^{2d})$ and $\tau \in (0, T)$, 
\[ \int_{\mathbb{R}^{2d}}\zeta(t, z) f(t, z) \di z =  \int_{\mathbb{R}^{2d}}\zeta(0,z) f_0(z) \di z + \int_0^{\tau} \int_{\mathbb{R}^{2d}}(\partial_s\zeta+v\cdot \nabla_x \zeta +\nabla_v\zeta \cdot L[f])f(s, z) \di z \di s.\]
	\end{enumerate}	
\end{definition}
For given weak solutions $f$ and $g$, we introduce measure-valued solutions to \eqref{A-2} whose Radon-Nikodym derivatives are $f$ and $g$:
\[ \di \mu_t(z) = f(t, z) \di z, \quad  \di \nu_t(z) = g(t, z) \di z. \]
Next, we introduce a concept of weak stability of measure-valued solutions to \eqref{A-2} as follows.
\begin{definition} \label{D1.2}
\emph{(Weak stability \cite{Te})}
For $T \in (0, \infty]$, let \(\mu_t\) and \(\nu_t\) be two measure-valued solutions to \eqref{A-2} on \([0,T) \). The Cauchy problem for the KCS model \eqref{A-2} satisfies a finite(-in)-time weak stability in 1-Wasserstein metric if there exists a continuous and nonnegative function $(t, \zeta) \mapsto G_t(\zeta) = G(t, \zeta)$ such that 
\begin{equation} \label{A-4}
W_1(\mu_t, \nu_t) \leq G_T\Big(W_1(\mu_0, \nu_0) \Big), \quad t \in (0, T),
 \end{equation}
where $G_t(\zeta) \to 0$~~as $\zeta \to 0$ for fixed $t > 0$ and $t \mapsto G_{t}(\zeta)$ is increasing for a fixed $\zeta$. 
\end{definition}
\begin{remark} If the gauge function $G_t(\cdot)$ is independent of $t$ and it is not a linear function, then we call the \eqref{A-4} as the uniform(-in)-time weak stability estimate. In contrast, if the map $\zeta \mapsto G_T(\zeta)$ is linear, then we call \eqref{A-4} as the finite-time (strong) stability. Similarly, we can define uniform-time (strong) stability.  
\end{remark}
The weak and strong stabilities \eqref{A-4} have been studied in diverse disciplines, e.g., hyperbolic conservation, kinetic equations, and collective dynamics. More precisely, for the one-dimensional hyperbolic conservation laws, weak stability has been first studied by B. Temple \cite{Te} for the class of Glimm's weak entropy solutions. Later, Temple's weak stability has been further extended to $L^1$-stability in \cite{B-L-Y, L-Y}. Similarly, uniform-time weak stability in $L^1$-norm has been established for the Boltzmann equation by X. Lu \cite{Lu}, and Lu's result was also extended to the uniform $L^1$-stability near vacuum by Ha \cite{Ha}. Ha and his collaborators \cite{kinetic2} also investigated the uniform stability of the KCS models \eqref{A-2} in the context of phase-spatially confined setting: for any two measure-valued solutions $\mu_t$ and $\nu_t$ with compact supports, 
\begin{equation}\label{A-5}
\sup_{0 \leq t < \infty} W_1(\mu_t, \nu_t) \leq G W_1(\mu_0, \nu_0),
\end{equation}
for a positive constant $G$ independent of $t$. For phase-spatially extended setting, the authors in reference \cite{HWY} explicitly constructed a counterexample for the breakdown of uniform-time strong stability of KCS model \eqref{A-2} in 1-Wasserstein distance. The same counterexample can also be applied for the breakdown of uniform weak stability as well (see  Remark \ref{R3.1}). Thus, a natural question is as follows.
\vspace{0.1cm}
\begin{quote}
``Does the Cauchy problem \eqref{A-2} for the KCS model satisfy finite-time weak stability or not ?"
\end{quote}
\vspace{0.1cm}
In this paper, we address this question in an affirmative manner. In fact, our main results of this paper can be summarized as follows. \newline 

First, we derive the finite-time weak-stability \eqref{A-4} in $1$-Wasserstein metric in the sense of Definition \ref{D1.2}. More precisely, for \(T>0\),~\(M>0\),~\(\alpha>0\),  assume that initial data $f_0$ and $g_0$ satisfy the boundedness moment condition:
\begin{equation*}
	\max\left\{\int_{\mathbb R^{2d}}\bigl(|x|^2+e^{\alpha |v|}\bigr) f_0(z) \di z ,\quad \int_{\mathbb R^{2d}}\bigl(|x|^2+e^{\alpha |v|}\bigr) g_0(z) \di z \right\}\le M,
\end{equation*} and let $\mu_t$ and $\nu_t$ be two measure-valued solutions to \eqref{A-2} in the time interval \([0,T)\) with densities $f, g \in {\mathcal C}([0, T); L_+^1(\mathbb{R}^{2d})) \cap L_+^\infty([0, T) \times \mathbb{R}^{2d})$. Then, there exists a continuous and concave gauge function $G_T$ such that 
\begin{equation} \label{A-6}
\sup_{0\le t \le T}W_1(\mu_t,\nu_t) \le G_T \Big(W_1(\mu_0, \nu_0) \Big).
\end{equation}
For more details, we refer to Theorem \ref{T3.1}.\newline

Second, as a direct application of finite-time weak stability \eqref{A-6}, we derive a random mean-field limit in any finite-time interval $[0, T)$. For well-prepared initial datum $f_0$ and random initial configuration $Z_0 = \{ (x_{i,0}, v_{i,0}) \}$ with the common law $f_0$, let  $f$ and \(\mu^N_t\)  be the weak solution to \eqref{A-2} and the random empirical measure generated by \eqref{A-1} with initial configuration $Z_0$. Then,  for every finite $T \in (0, \infty)$, there exists positive constants $C_T$ and $\eta_T$ such that
\begin{equation} \label{A-7}
		\mathbb E\left[\sup_{0\le t\le T}W_1(\mu^N_t, \mu_t) \right]\le \frac{C_T}{N^{\eta_T}}.
\end{equation}
Note that as $N \to \infty$, the right-hand side of \eqref{A-7} tends to zero. For details on the fluctuation estimate \eqref{A-7}, we refer to Theorem \ref{T3.2}. \newline

The rest of this paper is organized as follows. In Section \ref{sec:2}, we study the basic properties of the particle and kinetic CS models \eqref{A-1} and \eqref{A-2}, and recall previous results on emergent dynamics and stability results. In Section \ref{sec:3}, we briefly summarize two main results on the finite-time weak stability and random mean-field limit in any finite-time interval with several comments. In Section \ref{sec:4}, we provide the detailed proof of finite-time weak stability of measure-valued solutions to \eqref{A-2}. In Section \ref{sec:5}, we present the proof of random mean-field limit in any finite-time interval, as a direct application of weak stability. Finally, Section \ref{sec:6} is devoted to a brief summary of main results and some remaining issues for  future work.

 \vspace{0.5cm} 
 
\noindent\textbf{Notation:} 
For $f$ and $g$, we say that $f \lesssim g$ if and only if there exists a positive constant $C$ such that $f \leq C g$, and throughout the paper, we use abbreviated notation:
\[ z = (x,  v), \quad  z_* = (x_*,  v_*), \quad   \di z = \di x \di v, \quad  \di z_* = \di x_* \di v_*.  \]

 \vspace{0.5cm} 
%
%
%

\section{Preliminaries}\label{sec:2}
\setcounter{equation}{0}
In this section, we first recall several basic estimates of the CS model, which will be crucially used in later sections. Second, we recall previous results on the measure-valued solution for the KCS model in phase-spatially confined and extended settings. 

\subsection{Particle and kinetic CS models} \label{sec:2.1}
In this subsection, we present elementary estimates for the particle CS model \eqref{A-1}, basic measure-theoretic formulation and forward particle trajectory associated with KCS model \eqref{A-2}.
\subsubsection{The CS model} \label{sec:2.1.1}
In this subsection, we study basic estimates for \eqref{A-1} regarding the conservation and energy dissipation. 
\begin{lemma} \label{L2.1}
For a constant $T \in (0, \infty]$, let $\{ (x_i, v_i) \}$ be a solution to \eqref{A-1}. Then, for $t \in [0, T)$, the following estimates hold.
\begin{align*}
\begin{aligned}
\noindent & (i)~\frac{\di }{\di t} \sum_{i=1}^{N} v_i(t) = 0, \quad \frac{\di }{\di t} \sum_{i=1}^{N} |v_i(t)|^2 =  -\frac{\kappa}{N} \sum_{i, j=1}^N  \phi(|x_i(t)-x_j(t)|) |v_j(t) - v_i(t)|^2 \leq 0.  \\
\noindent & (ii)~\frac{1}{N} \sum_{i=1}^N(|x_i(t)|+|v_i(t)|) \leq (1 + T)
\left[
\frac1N\sum_{i=1}^N|x_{i,0}|
+ \left(\frac1N\sum_{i=1}^N |v_{i,0}|^2\right)^{1/2}
\right].
\end{aligned}
\end{align*}
\end{lemma}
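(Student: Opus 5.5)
For part (i), I would differentiate directly using the velocity equation in \eqref{A-1}. Summing $\dot v_i$ over $i$ gives $\frac{\kappa}{N}\sum_{i,j}\phi(|x_i-x_j|)(v_j-v_i)$. Because $\phi(|x_i-x_j|)$ is symmetric in $(i,j)$ while $v_j-v_i$ is antisymmetric, exchanging the indices $i\leftrightarrow j$ shows this sum equals its own negative, so it vanishes.

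For the energy identity, I would compute
\[
\frac{\di}{\di t}\sum_i|v_i|^2=\frac{2\kappa}{N}\sum_{i,j}\phi(|x_i-x_j|)\,v_i\cdot(v_j-v_i).
\]
Symmetrizing in $(i,j)$ gives $2\,v_i\cdot(v_j-v_i)\mapsto v_i\cdot(v_j-v_i)+v_j\cdot(v_i-v_j)=-|v_j-v_i|^2$. This yields exactly the stated identity, and nonpositivity follows from $\phi\ge 0$. Global existence and uniqueness of the ODE solution are unproblematic, since the right-hand side is locally Lipschitz and, as the energy bound shows, velocities stay bounded. So the computation holds on $[0,T)$.

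For part (ii), I would first use (i) to get $\sum_i|v_i(t)|^2\le\sum_i|v_{i,0}|^2$. Then by Cauchy--Schwarz,
\[
\frac1N\sum_i|v_i(t)|\le\Big(\frac1N\sum_i|v_i(t)|^2\Big)^{1/2}\le\Big(\frac1N\sum_i|v_{i,0}|^2\Big)^{1/2}=:V_0.
\]
Next, from $x_i(t)=x_{i,0}+\int_0^t v_i(s)\di s$, averaging gives
\[
\frac1N\sum_i|x_i(t)|\le\frac1N\sum_i|x_{i,0}|+tV_0\le\frac1N\sum_i|x_{i,0}|+TV_0.
\]
Adding the two bounds produces $\frac1N\sum_i|x_{i,0}|+(1+T)V_0$, which is at most $(1+T)$ times the bracket in the statement since $1+T\ge1$.

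Nothing here is a real obstacle; the only care needed is the symmetrization step and noting that the bound in (ii) is meaningful for finite $T$ (for $T=\infty$ it is trivial).
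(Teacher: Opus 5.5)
Your proposal is correct and follows essentially the same route as the paper: index-exchange symmetrization for both the momentum and energy identities in (i), then energy dissipation with Cauchy--Schwarz for the velocity average and the integrated position equation $|x_i(t)|\le |x_{i,0}|+\int_0^t|v_i(s)|\,\di s$ for the spatial average in (ii). Your extra remarks on global solvability and the $T=\infty$ case are harmless additions that the paper does not spell out.
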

\begin{proof} 
(i) For the conservation of total momentum, we sum $\eqref{A-1}_2$ over all $i \in [N]$ and use the index exchange transformation $(i, j)~\longleftrightarrow~(j, i)$ to see 
\begin{align*}
\begin{aligned}
\frac{\di }{\di t} \sum_{i=1}^{N} v_i &=\frac{\kappa}{N}\sum_{i,j=1}^N\phi(|x_i-x_j|)(v_j-v_i)  \\
&= -\frac{\kappa}{N}\sum_{i,j=1}^N\phi(|x_i-x_j|)(v_j-v_i) = -\frac{\di }{\di t} \sum_{i=1}^{N} v_i.
\end{aligned}
\end{align*}
This yields
\[ 
\frac{\di }{\di t} \sum_{i=1}^{N} v_i= 0.
\]
For an energy dissipation estimate, we take an inner product $2v_i$ with $\eqref{A-1}_2$, sum the resulting relation over all $i \in [N]$  and use the index exchange transformation again to find 
\begin{align*}
\begin{aligned}
\frac{\di }{\di t} \sum_{i=1}^{N} |v_i|^2  &= \frac{2\kappa}{N} \sum_{i, j=1}^N \phi(|x_i-x_j|) v_i \cdot (v_j-v_i)  \\
&= -\frac{2\kappa}{N} \sum_{i, j=1}^N   \phi(|x_i-x_j|)  v_j \cdot (v_j - v_i) \\
& = -\frac{\kappa}{N} \sum_{i, j=1}^N  \phi(|x_i-x_j|) |v_j - v_i|^2 \leq 0,
\end{aligned}
\end{align*}
where the third relation follows from the addition of the first and the second relations and then divide the resulting relation by 2. \newline

\noindent (ii)~By the result of (i), one has 
\begin{equation}\label{B-0-0}
	\frac1N\sum_{i=1}^N |v_i(t)|^2
	\le
	\frac1N\sum_{i=1}^N |v_{i,0}|^2,
	\quad 0\le t\le T.
\end{equation}
We use $\eqref{A-1}_1$ to see
\begin{equation} \label{B-0-1}
|x_i(t)|
\le
|x_{i, 0}|+\int_0^t |v_i(s)| \di s.
\end{equation}
Now, we use the Cauchy--Schwarz inequality, \eqref{B-0-0} and \eqref{B-0-1} to get 
\begin{equation} \label{B-0-2}
\frac1N\sum_{i=1}^N |v_i(t)|
\le
\left(\frac1N\sum_{i=1}^N |v_i(t)|^2\right)^{1/2}
\le
\left(\frac1N\sum_{i=1}^N |v_{i,0}|^2\right)^{1/2},
\end{equation}
and
\begin{align}
\begin{aligned} \label{B-0-3}
	\frac1N\sum_{i=1}^N |x_i(t)|
	&\le
	\frac1N\sum_{i=1}^N |x_{i,0}|
	+
	\int_0^t
	\frac1N\sum_{i=1}^N |v_i(s)| \di s \\
	&\le
	\frac1N\sum_{i=1}^N |x_{i,0}|
	+
	T\left(\frac1N\sum_{i=1}^N |v_{i,0}|^2\right)^{1/2}, \quad t \in [0, T).
\end{aligned}
\end{align}
Finally, we add \eqref{B-0-2} and \eqref{B-0-3} to get the desired estimate. 
\end{proof}

\subsubsection{Measure-theoretic formulation} \label{sec:2.1.2}
Let $\mathcal{P}(\bbr^{2d})$ be the set of all probability measures on the phase space $\bbr^{2d}$, which can be understood as normalized nonnegative bounded linear functional on ${\mathcal C}_0(\bbr^{2d})$, and we also introduce a subspace ${\mathcal P}_p(\bbr^{2d})$ with finite $p$-th moments:~for $p \in [1, \infty)$, 
\[ {\mathcal P}_p(\bbr^{2d}) := \left\{ \mu \in {\mathcal P}(\bbr^{2d}):~\int_{\bbr^{2d}} (|x|^p + |v|^p) \di \mu(x,v) < \infty \right\}.  \]
For a probability measure $\mu\in\mathcal{P}(\bbr^{2d})$, we use a standard duality relation between measure and test function:
\[\langle\mu, \varphi \rangle :=\int_{\bbr^{2d}} \varphi(x,v) \di \mu(x,v),\quad \varphi \in {\mathcal C}_0(\bbr^{2d}).\]
Next, we introduce a metric in the space of ${\mathcal P}_p(\bbr^{2d})$. 
	\begin{definition}\label{D2.1}
		\emph{(Support of measure and $p$-Wasserstein distance \cite{k8})} 
For $p \in [1, \infty)$, let $\mu, \nu \in \mathcal{P}_p(\mathbb{R}^{2d})$.  
\begin{enumerate}
\item
\mbox{spt}($\mu$) (the support of a measure $\mu$) is the closure of the set
consisting of all points $z$ in $\bbr^{2d}$ such that 
\[ \mu(B_r(z)) > 0, \quad \forall~r > 0. \]
\item		
The Wasserstein distance of order $p$ between $\mu$ and $\nu$ (or $p$-Wasserstein distance between $\mu$ and $\nu$) is defined as follows:
	\begin{equation*}
		W^p_p(\mu, \nu):=\inf\left\{\int_{\mathbb{R}^{2d}} |z-z_*|^p \di \pi(z, z_*):\pi\in\Pi(\mu, \nu)\right\},
	\end{equation*}
	where $\Pi(\mu,\nu)$ is the set of all probability measures on $\mathbb{R}^{2d}$ with marginals $\mu$ and $\nu$ in ${\mathcal P}(\bbr^d)$, respectively. 	We call any element $\pi$ in $\Pi(\mu, \nu)$ as the coupling of $\mu$ and $\nu$. 
\end{enumerate}	
\end{definition}
Now, we are ready to recall the concept of measure-valued solution to \eqref{A-2} which is parallel to the concept of weak solution in Definition \ref{D1.1}.
\begin{definition} \label{D2.2}
\emph{ (Measure-valued solution \cite{k2})} For $T\in[0,\infty)$, let $\mu \in L^{\infty}([0,T);\mathcal{P}(\bbr^{2d}))$ be a measure-valued solution to \eqref{A-2} with the initial measure $\mu_0 \in\mathcal{P}(\bbr^{2d})$ if and only if the following three assertions hold:
\begin{enumerate}
\item Total mass is normalized: $\langle\mu_t,1\rangle=1$.
\item $\mu$ is weakly continuous in $t$:
\[ t \quad \mapsto \quad \langle\mu_t, \varphi \rangle~\mbox{is continuous} \quad \forall~\varphi \in {\mathcal C}_0^1(\bbr^{2d}). \]
\item $\mu$ satisfies \eqref{A-2} in weak sense: for any $\psi \in {\mathcal C}^1_0([0, T) \times \bbr^{2d})$ and $t \in (0, T)$, 
\begin{equation}\label{msol}
 \langle\mu_t, \psi(t,\cdot,\cdot)\rangle-\langle \mu_0, \psi(0,\cdot,\cdot)\rangle=
\int_0^t \Big \langle\mu_s,\partial_s \psi + v\cdot\nabla_x \psi + L[f]\cdot\nabla_v \psi  \Big \rangle \di s.
\end{equation}
\end{enumerate}
\end{definition}
\begin{remark} \label{R2.1} Below, we provide two comments to be used in later sections.
\begin{enumerate}
\item
For a weak solution $f$ to \eqref{A-2}, if we introduce a measure $\mu$ by  
\[ \di \mu_t(x, v) = f(t,x,v) \di x \di v. \]
Then, $\mu$ is also the measure-valued solution to \eqref{A-2}. 
\vspace{0.1cm}
\item
Consider the empirical measure $\mu^N$ generated by the solution to \eqref{A-1}:
\[
\mu^N_t:=\frac1N\sum_{i=1}^N\delta_{(x_i(t),v_i(t))}.
\]
Then it is a measure-valued solution to \eqref{A-2}. For any
\(\psi \in C_c^1([0, T) \times \mathbb R^{2d})\), we use \eqref{A-1}:
\[
\begin{cases}
\displaystyle \dot x_i = v_i, \quad t > 0, \vspace{6pt}\\
\displaystyle \dot v_i =
L[\mu^N_t](x_i,v_i),
\end{cases}
\]
 to see
\begin{align}
\begin{aligned} \label{B-0}
& \frac{\di}{\di t}\int_{\mathbb R^{2d}} \psi(t, x,v) \di \mu^N_t = \frac{1}{N} \sum_{i=1}^{N}  \frac{\di}{\di t} \psi(t, x_i,v_i)  \\
& \hspace{0.5cm} = 
\frac1N\sum_{i=1}^N
\left[ \partial_t \psi(t,x_i, v_i) + 
\nabla_x\psi(t,x_i,v_i)\cdot {\dot x}_i
+
\nabla_v\psi(t,x_i,v_i)\cdot \dot v_i
\right] \\
& \hspace{0.5cm}  = \frac1N\sum_{i=1}^N
\left[ \partial_t \psi(t,x_i, v_i) + 
\nabla_x\psi(t,x_i,v_i)\cdot v_i
+
\nabla_v\psi(t,x_i,v_i)\cdot L[\mu^N](x_i,v_i)
\right].
\end{aligned}
\end{align}
We integrate \eqref{B-0} from $0$ to $t$ to derive the defining relation \eqref{msol}.  Thus \(\mu_t^N\) satisfies the weak formulation of the Cauchy problem \eqref{A-2}. \end{enumerate}
\end{remark}
\vspace{0.2cm}

\subsubsection{Characteristic flow } \label{sec:2.1.3}
For a given $z = (x, v) \in {\mathbb R}^{2d}$ and $t > 0$, we define a forward bi-characteristics $ (X(t), V(t)) : = (X(t, 0,z), V(t,0,z))$ issued from $z$ at time $t = 0$ as the unique solution for the following Cauchy problem:
\begin{equation} \label{B-1}
	\begin{cases}
		\displaystyle \dot X(t)=V(t), \quad t > 0,  \vspace{4pt} \\
		\displaystyle \dot V(t)=L[f](t,X(t),V(t)), \vspace{4pt} \\
		\displaystyle (X(0), V(0)) = z,
	\end{cases}
\end{equation}
where the velocity alignment force along the particle trajectory is given by the following relation:
\begin{equation} \label{B-2}
 L[f](t,X(t),V(t))=-\kappa\int_{\mathbb{R}^{2d}} \phi( |X(t) -x_*|) \left(V(t)-v_*\right)  f(t, z_*) \di z_*.
\end{equation}
Note that $L[f]$ in \eqref{B-2} is continuous in $t$ and Lipschitz continuous in state variables $(X, V)$, thus if we have weak solution $f$ to \eqref{A-2}, the standard Cauchy-Lipschitz theory  provides a local well-posedness of  particle trajectories near $t = 0$. Moreover,
the vector field generated by the right-hand side of \eqref{B-1} is locally bounded in $X$ and sub-linear in $V$, hence particle trajectory is globally well-defined. Thus the particle trajectory map $( X(t,0,z), V(t,0,z))$ is a well-defined homeomorphism for each fixed time $t$ and a ${\mathcal C}^1$-function of time $t$.
We set 
\[ \Phi^t(z) = (X(t,0, z), V(t,0, z)). \]
The weak solution $\mu_t = f(t,z) \di z$ to \eqref{A-2} is also given by the push-forward of the initial measure $\di \mu_0(z) = f_0(z) \di z$:
\[ \mu (t)= \Phi^t_\#\mu_0. \]
This is equivalent to 
\begin{equation} \label{B-2-0}
	\int_{\mathbb{R}^{2d}}\varphi(z) f(t, z) \di z =\int_{\mathbb{R}^{2d}}\varphi(X(t,0,z),V(t,0,z))f_0(z) \di z, \quad \forall~\varphi \in {\mathcal C}_b^0(\mathbb{R}^{2d}).
\end{equation}
As a direct application of \eqref{B-2-0}, we have the following elementary estimates. 
\begin{lemma} \label{L2.2}
\emph{\cite{HWKCS2026}}
Let $f = f(t,z)$ be a weak solution to \eqref{A-2} with sufficiently fast decay at infinite in phase space, and let $(X(t), V(t))$ be an associated particle trajectory defined in \eqref{B-1}. Then, the following estimates hold. 
\begin{align*}
\begin{aligned}
& (i)~  \int_{\mathbb{R}^{2d}}  f(t,z) \di z = \int_{\mathbb{R}^{2d}}  f_0(z) \di z, \quad \int_{\mathbb{R}^{2d}}  v f(t,z) \di z = \int_{\mathbb{R}^{2d}} v f_0(z) \di z. \\
& (ii)~  \int_{\mathbb{R}^{2d}} v f(t,z) \di z = \int_{\mathbb{R}^{2d}} V(t) f_0(z) \di z. \\
& (iii)~  \int_{\mathbb{R}^{2d}} |v|^D f(t,z) \di z = \int_{\mathbb{R}^{2d}} |V(t)|^D f_0(z) \di z  \quad \mbox{for}~D \geq 1. \\
& (iv)~ \int_{\mathbb{R}^{2d}} | v- v_c(t) |^2f(t,z) \di z = \int_{\mathbb{R}^{2d}} |V(t)- v_{c}(0)|^2f_0(z) \di z.
\end{aligned}
\end{align*}
\end{lemma}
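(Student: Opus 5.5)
The plan is to read all four identities off the push-forward relation \eqref{B-2-0}, $\mu_t = \Phi^t_\#\mu_0$, and to use the weak formulation only for the conservation of momentum. The key step is to extend \eqref{B-2-0} from $\varphi \in \mathcal C^0_b(\mathbb R^{2d})$ to test functions of polynomial growth in $v$, namely $\varphi(z)=1$, $\varphi(z)=v$, $\varphi(z)=|v|^D$ and $\varphi(z)=|v-w|^2$ for a fixed vector $w$.

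\textbf{Extension of \eqref{B-2-0}.} Take the truncations $\varphi_R := \varphi\,\chi(\cdot/R)$, where $\chi$ is a smooth cutoff equal to $1$ on $B_1$ and supported in $B_2$. For $\varphi\ge 0$ we pass to the limit $R\to\infty$ on both sides by monotone convergence. This gives
\[
\int |v|^D f(t,z)\,\di z = \int |V(t,0,z)|^D f_0(z)\,\di z
\]
as an identity in $[0,\infty]$, which is (iii). For vector-valued $\varphi$ we split into positive and negative parts, or use dominated convergence, once finiteness is known. Finiteness comes from the assumed fast decay of $f$: for instance, exponential tails $\int e^{\alpha|v|}f\,\di z<\infty$ control every polynomial velocity moment. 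We can also argue on the trajectory side: since $|L[f]|\le \kappa\|\phi\|_{L^\infty}(|V|+\int|v_*|f\,\di z_*)$, Gr\"onwall gives $|V(t)|\lesssim (1+|v|)e^{Ct}$, which is integrable against $f_0$. Taking $\varphi=1$ and $\varphi=v$ then gives mass conservation and (ii) directly.

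\textbf{Momentum conservation in (i).} We differentiate $\int V(t,0,z)f_0(z)\,\di z$ in time and use \eqref{B-1} and \eqref{B-2-0} once more:
\[
\frac{\di}{\di t}\int v f\,\di z = -\kappa\iint \phi(|x-x_*|)(v-v_*)\,f(z)f(z_*)\,\di z\,\di z_*.
\]
This vanishes by antisymmetry under the exchange $z\leftrightarrow z_*$, which mirrors the proof of Lemma \ref{L2.1}(i). Differentiation under the integral is justified by the moment bounds from the previous step together with the Gr\"onwall bound on $|\dot V|$.

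\textbf{Identity (iv).} Apply the extended identity with $\varphi(z)=|v-w|^2$ and $w=v_c(t)$, where $v_c(t)=\int v f\,\di z$ for normalized mass. Since $v_c(t)=v_c(0)$ by (i), this gives
\[
\int |v-v_c(t)|^2 f\,\di z = \int |V(t)-v_c(0)|^2 f_0\,\di z.
\]

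\textbf{Main obstacle.} The only nontrivial point is the justification of unbounded test functions in \eqref{B-2-0}. It needs integrability of $|V(t,0,z)|^D$ against $f_0$. I would obtain this from the linear-growth Gr\"onwall bound on the characteristics combined with the assumed decay of $f_0$.
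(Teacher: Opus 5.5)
Your proposal is correct and follows the route the paper indicates. All four identities are read off the push-forward relation \eqref{B-2-0}; the paper itself just defers the details to [Lemma 2.2, \cite{HWKCS2026}]. Momentum conservation comes from the antisymmetry of the alignment term, and your truncation and monotone-convergence argument correctly extends \eqref{B-2-0} to test functions with polynomial growth in $v$.

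The only implicit input is $\sup_{0\le s\le t}\int_{\mathbb R^{2d}}|v|f(s,z)\,\di z<\infty$, which your Gr\"onwall bound on $|V(t)|$ needs. This is covered by the paper's standing assumption that $L[f]$ and the characteristics in \eqref{B-1} are well defined.
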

\begin{proof}
For details, we refer to [Lemma 2.2, \cite{HWKCS2026}].
\end{proof}
\subsection{Previous results} \label{sec:2.2}
In this subsection, we discuss previous results on the well-posedness and large-time behaviors of \eqref{A-2} in two different settings: 
\vspace{0.1cm}
\begin{quote}
`` Spatially confined and extended settings with compact velocity support \\
$ \mbox{v.s.} \quad$ Phase-spatially extended setting. "
\end{quote}
\vspace{0.1cm}
Note that the latter case corresponds to the case with unbounded supports in space and velocity. 
\begin{proposition} 
\emph{(Compact velocity support \cite{k4,k1,w2,w6}) } \label{P2.1}
For $T \in (0, \infty]$, we assume that the initial data $f_0,g_0\in (L^1 \cap L^{\infty}_+)(\mathbb{R}^{2d})$ have compact velocity support, i.e., there exists a positive constant $P_\infty$ such that 
\begin{equation*}
	P_\infty :=\max\left\{\sup\limits_{(x,v)\in{\rm spt}(f_0)} |v|,~~\sup\limits_{(x,v)\in{\rm spt}(g_0)} |v|\right\}<\infty.
\end{equation*}
Then, there exist unique weak solutions $f,g\in L_+^{\infty}([0, T); L^1(\mathbb{R}^{2d}))$ to \eqref{A-2} with the initial data $f_0$ and $g_0$, respectively, with the following properties:
\vspace{0.1cm}
\begin{enumerate}
	\item 
	(Propagation of compact velocity supports): 
	\begin{equation}\label{B-3}
		\sup\limits_{(x,v)\in{\rm spt}(f(t))} |v|\le P_\infty ,  \quad  \sup\limits_{(x,v)\in{\rm spt}(g(t))} |v|\le P_\infty, \quad \forall~ t \in [0, T).
	\end{equation} 
	\item	
	(Conservation of total mass and  momentum):
	\[ \frac{\di}{\di t}\int_{\mathbb{R}^{2d}}f(t, z) \di z=0, \quad  \frac{\di}{\di t}\int_{\mathbb{R}^{2d}}vf(t, z) \di z =0, \quad \forall~ t \in [0, T). 
	\]
	\item
	(Finite-time stability in $W_p$):~ there exists a positive constant $G_T = G(T, P_\infty, \beta, p)$ such that
	\begin{equation*}
	 \sup_{0 \leq t < T} W_p(f(t),g(t))\le G_T W_p(f_0,g_0),
	\end{equation*}
	where $\beta$ is the spatial decay of  Cucker--Smale's communication weight $\phi=1/(1+s)^{\beta}$.
	\vspace{0.1cm}
\end{enumerate}	
\end{proposition}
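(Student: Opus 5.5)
The plan is to treat the three assertions of Proposition \ref{P2.1} in order, since each builds on the previous one. All three rest on the characteristic flow \eqref{B-1} and the push-forward representation \eqref{B-2-0}. Existence and uniqueness of weak solutions in the compact-velocity class will be obtained by the standard fixed-point/characteristics scheme. On the set of velocities $|v|\le P_\infty$ (and $|v_*|\le P_\infty$), the force $L[f]$ is globally Lipschitz in $(x,v)$: its $v$-Lipschitz constant is $\kappa\|\phi\|_{L^\infty}$, and its $x$-Lipschitz constant is $2\kappa\|\phi'\|_{L^\infty}P_\infty$. Moreover, $L[f]$ is Lipschitz in $f$ with respect to $W_1$. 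Banach's fixed point theorem on $\mathcal C([0,T_0];\mathcal P_1)$ for small $T_0$, iterated using the a priori support bound below, gives a global solution. Uniqueness within this class follows from the same estimate as in (3) with $p=1$.

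\textbf{Step (1): propagation of the velocity support.} Along a characteristic, write
\[
\frac{\di}{\di t}\frac{|V|^2}{2}=-\kappa\int\phi(|X-x_*|)\bigl(|V|^2-V\cdot v_*\bigr)f\,\di z_*.
\]
Take the maximal speed $P(t)=\max_{z\in \mathrm{spt} f_0}|V(t,0,z)|$. At a point attaining this maximum we have $V\cdot v_*\le |V|\,P(t)=|V|^2$ for every $v_*$ in the support of $f(t)$, so the integrand is nonnegative and the right-hand side is $\le 0$. A Dini-derivative argument then gives $P(t)\le P(0)\le P_\infty$, which is \eqref{B-3}. Conservation (2) follows by testing \eqref{msol} with $\psi\equiv1$ and $\psi=v$; both are admissible after a cutoff, because the velocity support is bounded and the mass is finite. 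The momentum identity uses the antisymmetry of $\phi(|x-x_*|)(v_*-v)$ under the exchange $z\leftrightarrow z_*$.

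\textbf{Step (3): finite-time stability, a Dobrushin-type estimate.} Let $\Phi^t_f,\Phi^t_g$ be the two characteristic flows, and take an optimal coupling $\pi_0\in\Pi(f_0,g_0)$. Then $\pi_t:=(\Phi^t_f\times\Phi^t_g)_\#\pi_0$ is a coupling of $f(t),g(t)$, and
\[
W_p^p(f(t),g(t))\le D(t):=\int|\Phi^t_f(z)-\Phi^t_g(w)|^p\,\di\pi_0(z,w).
\]
I would differentiate $D$ and split the force difference as
\[
L[f](\Phi_f)-L[g](\Phi_g)=\bigl(L[f](\Phi_f)-L[f](\Phi_g)\bigr)+\bigl(L[f](\Phi_g)-L[g](\Phi_g)\bigr).
\]
The first term is controlled by the Lipschitz constant above, which is uniform because of Step (1); this is exactly where the bound on $P_\infty$ enters. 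For the second term, represent both $f(t)$ and $g(t)$ through $\pi_t$ and use the Lipschitz property of $(x_*,v_*)\mapsto\phi(|x-x_*|)(v-v_*)$. This bounds the term by $C(P_\infty,\phi)\int|z_*-w_*|\,\di\pi_t$, which is at most $C\,D(t)^{1/p}$ by Jensen's inequality. Combining the two pieces with Hölder's inequality gives $D'\le C_T D$. Gr\"onwall's inequality then yields $W_p(f(t),g(t))\le e^{C_Tt/p}W_p(f_0,g_0)$. For $\phi=(1+s)^{-\beta}$ the constants depend only on $\beta$ and $P_\infty$.

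\textbf{Main obstacle.} The delicate point is Step (1). The maximum over the support is not differentiable, and the support of $f(t)$ must be identified with the image of $\mathrm{spt} f_0$ under the flow. I would handle this via the homeomorphism property of $\Phi^t$ and an upper Dini derivative argument. Everything else is routine once a uniform velocity bound is available.
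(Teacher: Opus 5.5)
The paper gives no proof of Proposition \ref{P2.1}. It is quoted from \cite{k4,k1,w2,w6}, and Remark \ref{R4.1} only observes that with bounded velocities the flow-coupling argument closes as a standard Dobrushin estimate. Your architecture (characteristics, fixed point, support propagation, coupling of flows) is that standard argument, and your Step (3) is sound. Since $|V_g|\le P_\infty$ and $|v_*|\le P_\infty$ on the supports, your two pieces are bounded by $C|\Phi^t_f-\Phi^t_g|$ and $C\,D^{1/p}$ respectively, with $C=C(\kappa,\|\phi\|_{W^{1,\infty}},P_\infty)$, and H\"older then gives $D'\le C_pD$. For $p=1$ this is the paper's computation for $\Delta$ in Section \ref{sec:4}, with the factor $|V_g|$ now bounded by $P_\infty$, so the truncation of Case B.2 in Appendix \ref{App-A} is unnecessary.

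The step that does not work as written is Step (1). Proposition \ref{P2.1} is the spatially extended case, so $\mathrm{spt} f_0$ is closed but may be unbounded in $x$. Then $P(t)=\sup_{z\in\mathrm{spt} f_0}|V(t,0,z)|$ need not be attained, and there is no point at which to run your argument ``at a point attaining this maximum''. The upper Dini derivative you invoke handles non-differentiability of a maximum, not non-attainment. At a near-maximizer with $|V|=P(t)-\delta$ you only get $\frac{\di}{\di t}\frac{|V|^2}{2}\le\kappa\|\phi\|_{L^\infty}P(t)\,\delta$. Passing from this to $D^+P\le 0$ would still need time-Lipschitz bounds uniform in $z$, which you do not supply.

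The simplest repair avoids maximization altogether: build the bound into the fixed-point class. If the input path $\mu_t$ is concentrated on $\{|v|\le P_\infty\}$, each characteristic satisfies $\dot V=\kappa\bigl(b(t)-a(t)V\bigr)$. Here $a=\int\phi(|X-x_*|)\,\di\mu_t(z_*)\in[0,\|\phi\|_{L^\infty}]$ and $|b|=\bigl|\int\phi(|X-x_*|)v_*\,\di\mu_t(z_*)\bigr|\le aP_\infty$. Hence $\frac{\di}{\di t}|V|^2\le 2\kappa a|V|(P_\infty-|V|)$, so the ball $\{|v|\le P_\infty\}$ is forward invariant trajectory by trajectory. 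Then $\Phi^t_\#f_0$ is concentrated on this closed set, so its support lies in it, and the fixed point inherits \eqref{B-3}. This also removes the need to identify $\mathrm{spt} f(t)$ with $\Phi^t(\mathrm{spt} f_0)$, which you flagged as delicate.

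A related minor point: no spatial moment is assumed, so $f_0$ need not belong to $\mathcal P_1$, and $\mathcal C([0,T_0];\mathcal P_1)$ is the wrong space. Run the contraction instead in the bounded-Lipschitz distance, or on $\{\mu: W_1(\mu,\mu_0)<\infty\}$. This works because the kernel $z_*\mapsto\phi(|x-x_*|)(v-v_*)$ is bounded and Lipschitz on $\{|v_*|\le P_\infty\}$, where all the measures live.
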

Next,  we state the existence, uniqueness, and long-time behavior of a weak solution to \eqref{A-2} in a fully phase-spatially extended setting.
\begin{proposition}  \label{P2.2}
	\emph{(Phase-spatially extended setting \cite{kinetic3,HWKCS2026})} 
	Suppose that the initial probability density function \( f_0 \) satisfies
	\[
	f_0 \in (L^1 \cap L^{\infty}_+)(\mathbb{R}^{2d}), \quad |z|^2 f_0 \in L^1(\mathbb{R}^{2d}).
	\]
	Then the following assertions hold.
	\begin{enumerate}
		\item
		For $T \in (0, \infty)$, there exists a weak solution $f \in {\mathcal C}([0, T) ; L^1(\mathbb{R}^{2d})) \cap L_+^\infty([0, T) \times \mathbb{R}^{2d})$  to \eqref{A-2} such that
	\[
	|z|^2 f(t) \in L^1(\mathbb{R}^{2d}), \quad 	\int_{\mathbb{R}^{2d}}f(t, z) \di z =\int_{\mathbb{R}^{2d}}f_0(z) \di z, \quad t \in (0, T).
	\]
	If we assume that \( f_0 \) satisfies 
	\[ e^{\alpha|v|}f_0 \in L^1(\mathbb{R}^{2d}), \quad \alpha>0, \]
	 then, a weak solution to \eqref{A-2}  is unique.
	\vspace{0.2cm}
	\item If the initial density $f_0$ satisfies one of the following decay conditions: 
 \begin{align*}
 \begin{aligned}
&  \mbox{Either}~~{\mathcal M}_p(f_0, D_1, D_2) :=  \int_{\mathbb{R}^{2d}} (|x|^{D_1}+ |v|^{D_2}) f_0(z) \di z < \infty, \\
& \mbox{or}~~ {\mathcal M}_e(f_0, \alpha, \delta) := \int_{\mathbb{R}^{2d}} e^{\alpha \left(| x|+| v |^{\delta}\right)} f_0(z) \di z < \infty,
\end{aligned}
\end{align*}
	then the weak flocking emerges asymptotically under suitable conditions on parameters:
	
	 \begin{equation*}
		\begin{cases}
			\displaystyle \sup\limits_{0\le t<\infty}\int_{\mathbb{R}^{2d}} |x- x_c(0) - t v_c(0) |^2f(t, z) \di z < \infty, \vspace{6pt}\\
			\displaystyle  \lim_{t \to \infty} \int_{\mathbb{R}^{2d}} |v- v_c(0) |^2f(t, z)  \di z  = 0. 
		\end{cases}
	\end{equation*}
Here, $x_c(0)$ and $v_c(0)$ are the spatial and velocity averages of initial datum $f_0$:
\[ x_c(0) :=  \frac{\int_{\bbr^{2d}} x f_0(z) \di z}{\int_{\bbr^{2d}}  f_0(z) \di z}, \quad  v_c(0) :=  \frac{\int_{\bbr^{2d}} v f_0(z) \di z}{\int_{\bbr^{2d}}  f_0(z) \di z}.
\]	
	
	\end{enumerate}
\end{proposition}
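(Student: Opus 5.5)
The plan is to prove the three parts separately: existence by regularization and compactness, uniqueness by a Lagrangian Osgood argument, and flocking from the energy identity combined with a moment-based lower bound on communication.

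\emph{Existence.} Approximate $f_0$ by $f_0^n := f_0\,\mathbf 1_{\{|v|\le n\}}/\|f_0\mathbf 1_{\{|v|\le n\}}\|_{L^1}$, which has compact velocity support. Proposition \ref{P2.1} then gives unique weak solutions $f^n$, and by \eqref{B-2-0} $f^n=\Phi^t_{n\#}f^n_0$. The uniform bounds come from three sources.
\begin{itemize}
\item Along characteristics, $\frac{\di}{\di t}f^n(t,\Phi^t_n)=-(\nabla_v\cdot L[f^n])f^n=\kappa d(\phi*\rho^n)f^n\le \kappa d\|\phi\|_{L^\infty}f^n$. Hence $\|f^n(t)\|_{L^\infty}\le e^{\kappa d\|\phi\|_{L^\infty}T}\|f_0\|_{L^\infty}$.
\item Mass is conserved.
\item The kinetic analogue of Lemma \ref{L2.1}(i), i.e. dissipation of $\int|v|^2f^n$, together with $\frac{\di}{\di t}\int|x|^2f^n\le \int|x|^2f^n+\int|v|^2f^n$, bounds the second moments uniformly in $n$ on $[0,T]$.
\end{itemize}
These bounds give tightness and weak-$*$ compactness in $L^\infty$. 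The weak formulation bounds $\partial_t\langle f^n,\psi\rangle$, so Arzelà--Ascoli yields a subsequence converging in ${\mathcal C}([0,T];\mathcal P_1)$-weak. To pass to the limit in the nonlinear term, I would use that $L[f^n](t,x,v)=-\kappa\big(v(\phi*\rho^n)-\phi*(\rho^nu^n)\big)$ is built from convolutions of $\phi\in W^{1,\infty}$ with the moments $\rho^n,\rho^nu^n$. By the uniform second moments, these converge locally uniformly in $(t,x)$, so $L[f^n]\nabla_v\zeta\to L[f]\nabla_v\zeta$ uniformly on ${\rm spt}\,\zeta$. Lower semicontinuity keeps $|z|^2f(t)\in L^1$.

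\emph{Uniqueness.} Let $f,g$ be two solutions with the same $f_0$, and let $\Phi^t,\Psi^t$ be their characteristic flows. Set
\[
D(t):=\int_{\mathbb R^{2d}}\big(|X_f-X_g|+|V_f-V_g|\big)(t,z)f_0(z)\di z .
\]
Differentiating gives $|L[f](X_f,V_f)-L[g](X_g,V_g)|\lesssim |V_f-V_g| + (|V_f|+1)\big(|X_f-X_g|+D\big)+\ldots$. The cross terms are estimated by writing $f(t)=\Phi^t_\#f_0$, $g(t)=\Psi^t_\#f_0$ and using the same $z_*$-coupling. The troublesome term is $\int |V_f|\,|X_f-X_g|f_0$, where no uniform Lipschitz constant is available. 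I would split it on $\{|V_f|\le R\}$ and its complement, using the propagated exponential moment $\int e^{\alpha|V_f|}f_0\le C_T$ (obtained as in Lemma \ref{L2.2}(iii) with a Gr\"onwall bound on $\int e^{\alpha|v|}f$) together with the spatial second moments. This gives
\[
\int |V_f|\,|X_f-X_g|f_0\lesssim RD+C e^{-\alpha R/2},
\]
and choosing $R\sim\log(1/D)$ yields $D'\lesssim D\log(1/D)$. Since $D(0)=0$ and $\int_0 \frac{\di r}{r\log(1/r)}=\infty$, the Osgood lemma forces $D\equiv0$, hence $f=g$. I expect this log-Lipschitz closure to be the main technical obstacle; it is also exactly the mechanism later producing the concave gauge $G_T$.

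\emph{Flocking.} Galilean invariance allows assuming $x_c(0)=v_c(0)=0$. Set $E(t):=\int|v|^2f$ and $\mathcal X(t):=\int|x-tv_c|^2f$. The energy identity is
\[
E'=-\kappa\iint\phi(|x-x_*|)|v-v_*|^2ff_*.
\]
I would first get a sublinear bound $\mathcal X(t)\lesssim (1+t)^{\gamma}$ from the moment assumption, either ${\mathcal M}_p$ via propagated moments or ${\mathcal M}_e$ via exponential tails. Chebyshev's inequality then places at least half the mass in a ball $B_{R(t)}$ with $R(t)\sim \mathcal X(t)^{1/2}$. Restricting the double integral to pairs in this ball gives $E'\ge$-bound of the form $E'\le -c\,\phi(2R(t))\big(E-\text{tail}\big)$, where the tail is controlled by the velocity moments. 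Under the stated parameter conditions $\int^\infty\phi(2R(t))\di t=\infty$, which yields $E(t)\to0$. Feeding the resulting decay back into $\mathcal X'\le 2\mathcal X^{1/2}E^{1/2}$ gives $\sup_t\mathcal X<\infty$, closing a bootstrap between the two quantities.
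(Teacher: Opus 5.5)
The paper does not prove Proposition~\ref{P2.2}; it imports it from \cite{kinetic3} and \cite{HWKCS2026}.

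\textbf{Uniqueness.} Your argument is the mechanism the paper itself runs in Section~\ref{sec:4} and Appendix~\ref{App-A}. Your $D$ is $\Delta_\pi$ for the diagonal coupling $\pi_0=(\mathrm{id},\mathrm{id})_\# f_0$, and your split on $\{|V|\le R\}$ is Case~B. With $\Delta(0)=0$, the Osgood step behind \eqref{C-9-5-11} forces $\Delta\equiv0$. So uniqueness is essentially the case $f_0=g_0$ of Theorem~\ref{T3.1}.

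\textbf{Existence.} Your route (velocity truncation, the $L^\infty$ bound from $\nabla_v\cdot L[f]=-\kappa d\,\phi*\rho$, uniform second moments, convergence of the convolution terms) is sound and elementary. It works because $\phi\in W^{1,\infty}$ makes the nonlinear term compact on its own. It only gives continuity of $t\mapsto f(t)$ in the weak (or $W_1$) sense, though. The strong ${\mathcal C}([0,T);L^1)$ regularity needs one more step, e.g.\ identifying the limit as $\Phi^t_\#f_0$ for the locally Lipschitz limiting field.

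\textbf{Flocking: the gap is at the localization step.} No ``sublinear'' bound on $\mathcal X$ is available a priori. The moments give only $\frac{\di}{\di t}\mathcal X^{1/2}\le E^{1/2}\le E(0)^{1/2}$, i.e.\ ballistic growth $\mathcal X\lesssim(1+t)^2$; anything better presupposes the decay of $E$ you are trying to prove.

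More seriously, with $\int vf\,\di z=0$ one has
\[
\iint_{\{|x|,|x_*|<R\}}|v-v_*|^2f f_*\,\di z\,\di z_*\ \ge\ 2m_RE-2E_{\rm out},\qquad E_{\rm out}:=\int_{\{|x|\ge R\}}|v|^2f\,\di z.
\]
For $R\sim\mathcal X^{1/2}$, Chebyshev only gives $m_R\ge\frac12$, and nothing makes $E_{\rm out}$ small. A small fraction of fast particles beyond that radius can carry almost all of $E$. Then $E-2E_{\rm out}$ can be negative, and $E'\le-c\,\phi(2R)(E-\mathrm{tail})$ gives no decay.

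Velocity moments alone cannot fix this; the radius must outrun the spreading scale. Take, e.g., $R(t)=(1+t)^{1+\epsilon}$ or $K(t)\mathcal X^{1/2}$ with $K(t)\to\infty$. Then spatial tails ($\int|x|^{D_1}f\lesssim(1+t)^{D_1}$, or exponential moments) make $\int_{\{|x|\ge R\}}f$ decay. H\"older with a $|v|^{D_2}$ moment then makes $E_{\rm out}$ decay at a definite rate.

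This costs a factor in $\phi(2R(t))$. Balancing $\int^\infty\phi(2R(t))\,\di t=\infty$ (e.g.\ $\beta(1+\epsilon)\le1$ for $\phi\sim r^{-\beta}$) against the tail rate is exactly what the ``suitable conditions on parameters'' must encode. In particular they must include a heavy-tail lower bound on $\phi$, which the general class $\phi\in W^{1,\infty}$ does not give. For compactly supported $\phi$ the conclusion fails.

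Finally, $\sup_t\mathcal X<\infty$ does not follow from $E\to0$. Along your route, $\mathcal X^{1/2}(t)\le\mathcal X^{1/2}(0)+\int_0^tE^{1/2}\,\di s$, so you need $\int_0^\infty E^{1/2}\,\di s<\infty$. That requires a quantitative rate such as $E=O(t^{-2-\epsilon})$, extracted from the Duhamel formula for $E$ with the decaying tail as source term. The ``bootstrap'' you mention has to produce exactly this rate, and it is not carried out.
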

\vspace{0.2cm}
In particular, we have the following spatial and velocity moment estimates. 
\begin{lemma} \label{L2.3}
\emph{ \cite{HWKCS2026}}
	For $T \in (0, \infty], \alpha >0,~\delta \geq 1$, let $f \in {\mathcal C}([0, T) ; L^1(\mathbb{R}^{2d})) \cap L_+^\infty([0, T) \times \mathbb{R}^{2d})$ be a weak solution to \eqref{A-2} such that ${\mathcal M}_{e}(f_0, \alpha, \delta) < \infty$. Then, the following assertions hold.
\begin{enumerate}
\item	
The propagation of velocity moments hold.
	\begin{equation} \label{B-4}
		\begin{cases}
			\displaystyle \int_{\mathbb{R}^{2d}}e^{\alpha | v|}f(t, z) \di z \le	\int_{\mathbb{R}^{2d}} e^{\alpha | v|}f_0(z) \di z, \vspace{6pt}\\
			\displaystyle \int_{\mathbb{R}^{2d}}e^{\alpha | v|^{\delta}}f(t, z) \di z \le	\int_{\mathbb{R}^{2d}} e^{\alpha | v|^{\delta}}f_0(z) \di z.
		\end{cases}
	\end{equation}
\item	
For a nonnegative locally bounded and increasing function $t \mapsto R_v(t)$, we have  
		\begin{equation} \label{B-5}
		\iint_{|v| \geq R_v(t)}   f(t, z) \di z \le \mathcal{M}_{e}(f_0, \alpha, \delta) e^{-\alpha \left(R_v(t)\right)^{\delta}}.
	\end{equation}
\end{enumerate}	
\end{lemma}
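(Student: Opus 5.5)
We prove Lemma \ref{L2.3} along the particle trajectories \eqref{B-1}, using the push-forward identity \eqref{B-2-0}. By Lemma \ref{L2.2}(iii), and more generally by \eqref{B-2-0} applied to $\varphi(z)=\Psi(|v|)$ for a nondecreasing function $\Psi$ (approximated by bounded continuous truncations $\min\{\Psi,K\}$ and then $K\to\infty$ by monotone convergence), we get
\[ \int_{\mathbb{R}^{2d}} \Psi(|v|) f(t,z)\di z=\int_{\mathbb{R}^{2d}} \Psi(|V(t,0,z)|) f_0(z)\di z. \]
So (1) reduces to showing that $t\mapsto \Psi(|V(t,0,z)|)$ is nonincreasing for $\mu_0$-a.e.\ $z$. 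We will take $\Psi(r)=e^{\alpha r}$ and $\Psi(r)=e^{\alpha r^\delta}$.

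\textbf{Key step: no growth of the velocity modulus.} The naive computation gives
\[ \frac{\di}{\di t}\frac{|V|^2}{2} = -\kappa\int \phi(|X-x_*|)\,(|V|^2 - V\cdot v_*)\, f(t,z_*)\di z_*, \]
and this need not be $\le 0$ for an arbitrary trajectory, since $|v_*|$ can exceed $|V|$. We therefore use a maximum-principle-type argument in the spirit of the compact-support case \eqref{B-3}. The plan is a level-set comparison. Fix $R>0$ and compare $\mu_t(\{|v|\ge R\})$ with $\mu_0(\{|v|\ge R\})$. This is exactly what is needed, because
\[ \int \Psi(|v|)\di\mu_t=\Psi(0)+\int_0^\infty \Psi'(R)\,\mu_t(\{|v|\ge R\})\di R, \]
so both bounds in \eqref{B-4} follow from the monotonicity of the tail function $R\mapsto \mu_t(\{|v|\ge R\})$ in $t$.

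\textbf{Expected obstacle and fallback.} I expect the monotonicity of the tail mass to be the main obstacle; it is presumably where the structure of \cite{HWKCS2026} enters. The direct route is to note that the force $L[f]$ is affine in $V$ with a nonnegative damping coefficient $\kappa\int\phi f$. One would then show that the quantity $\sup$ over the support of $|V|$ is nonincreasing whenever the velocity support is bounded, and transfer this to the unbounded case by approximating $f_0$ with compactly supported data and using stability (Proposition \ref{P2.1}) together with lower semicontinuity of the moments. If only a weaker bound with a growth factor is obtainable, the inequalities in \eqref{B-4} would have to be read up to a constant; for the statement as given, I would pursue the approximation route and prove the inequality for the truncated data, where propagation of compact velocity support holds.

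\textbf{Part (2).} This is Chebyshev's inequality. On $\{|v|\ge R_v(t)\}$ we have $e^{\alpha|v|^\delta}\ge e^{\alpha R_v(t)^\delta}$, hence
\[ \iint_{|v|\ge R_v(t)} f(t,z)\di z\le e^{-\alpha R_v(t)^\delta}\int e^{\alpha|v|^\delta}f(t,z)\di z. \]
The last integral is at most $\int e^{\alpha|v|^\delta}f_0\di z\le \mathcal M_e(f_0,\alpha,\delta)$, by the second inequality in \eqref{B-4} and the trivial bound $e^{\alpha|v|^\delta}\le e^{\alpha(|x|+|v|^\delta)}$. This gives \eqref{B-5}.
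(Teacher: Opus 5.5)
Your part (2) is correct: it is Chebyshev's inequality combined with the second bound in \eqref{B-4} and $e^{\alpha|v|^\delta}\le e^{\alpha(|x|+|v|^\delta)}$. The gap is in part (1).

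Your layer-cake reduction replaces the moment inequality by a much stronger claim, namely that $t\mapsto\mu_t(\{|v|\ge R\})$ is nonincreasing for every $R>0$. That claim is false. Take $d=1$ and $\phi\equiv1$, which is bounded and Lipschitz. Let $f_0$ have unit mass, with mass $0.9$ supported in $\{|v|<0.1\}$ and mass $0.1$ supported in $\{|v-10|<0.1\}$. Then $L[f]=-\kappa(v-v_c)$ with conserved mean $v_c\in[0.9,1.1]$. Every characteristic therefore satisfies $V(t)=v_c+(V(0)-v_c)e^{-\kappa t}$. Consequently $\mu_t(\{|v|\ge 1/2\})$ equals $0.1$ at $t=0$ but equals $1$ for all $t>\kappa^{-1}\log 3$. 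Meanwhile $\int e^{\alpha|v|}\di\mu_t$ does decrease, toward $e^{\alpha v_c}$. So the moments are monotone while the tails are not, and no argument through level sets can succeed.

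Your fallback does not close the gap either. The maximum principle behind \eqref{B-3} (the force relaxes $V$ toward a local weighted average of velocities) only controls $\sup|v|$ on the support. That yields $\int e^{\alpha|v|}f(t)\,\di z\le e^{\alpha P_\infty}\int f_0\,\di z$, not a comparison with $\int e^{\alpha|v|}f_0\,\di z$. Even for compactly supported data you have no mechanism for \eqref{B-4}, so the approximation and lower-semicontinuity step has nothing to transfer.

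The missing idea is convexity combined with the $z\leftrightarrow z_*$ symmetrization, i.e.\ the kinetic analogue of the energy dissipation in Lemma \ref{L2.1}(i). For a convex $\Psi$ on $\bbr^d$, writing $f_*=f(t,z_*)$,
\[
\begin{aligned}
\frac{\di}{\di t}\int_{\bbr^{2d}}\Psi(v)f\,\di z
&=-\kappa\iint_{\bbr^{4d}}\phi(|x-x_*|)\,\nabla\Psi(v)\cdot(v-v_*)\,f f_*\,\di z\,\di z_*\\
&=-\frac{\kappa}{2}\iint_{\bbr^{4d}}\phi(|x-x_*|)\bigl(\nabla\Psi(v)-\nabla\Psi(v_*)\bigr)\cdot(v-v_*)\,f f_*\,\di z\,\di z_*\le0.
\end{aligned}
\]
The second equality uses the symmetry of $\phi(|x-x_*|)$ and the antisymmetry of $v-v_*$; the sign uses the monotonicity of the gradient of a convex function. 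Both $e^{\alpha|v|}$ and $e^{\alpha|v|^\delta}$ with $\delta\ge1$ are convex, being convex nondecreasing functions of $r\ge0$ composed with $|v|$. This is exactly where $\delta\ge1$ is used.

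To make this rigorous:
\begin{enumerate}
\item Replace $r\mapsto e^{\alpha r^\delta}$ by the convex nondecreasing $\Psi_K$ that coincides with it on $[0,K]$ and continues along its tangent line for $r>K$. Regularize $|v|$ by $\sqrt{|v|^2+\varepsilon^2}$. Then $\Psi_K$ of the regularized modulus has bounded gradient.
\item Run the computation along characteristics via \eqref{B-2-0}: write $\int\Psi_K(|V(t,z)|)f_0(z)\,\di z$ in integrated-in-time form and symmetrize with respect to $f_0(z)f_0(z_*)$. All terms are integrable thanks to the second velocity moment.
\item This gives $\int\Psi_K(|v|)f(t)\,\di z\le\int\Psi_K(|v|)f_0\,\di z$. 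Letting $\varepsilon\to0$ and then $K\to\infty$ by monotone convergence yields \eqref{B-4}.
\end{enumerate}
It is the pairing of $z$ with $z_*$, not any trajectory-wise or level-set monotonicity, that makes the argument work.
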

\begin{proof}
The proofs can be found in  Lemma 3.2 and Corollary 3.2 in reference \cite{HWKCS2026}.
\end{proof}
Next, we recall the propagation of spatial and velocity moments, and derive estimates on the mass of particles with sufficiently large velocities. 
\begin{lemma}\label{L2.4}
\emph{\cite{HWKCS2026}}
For $D\ge 2$ and $T \in (0, \infty]$, let $f \in {\mathcal C}([0, T);L^1(\mathbb{R}^{2d})) \cap L_+^\infty([0, T) \times \mathbb{R}^{2d})$ be a weak solution to \eqref{A-2} such that 
\[
{\mathcal M}_p(f_0, D, D) :=  \int_{\mathbb{R}^{2d}} (|x|^D+ |v|^D) f_0(z) \di z < \infty.
\]
Then, we have the following estimates: for $t \in [0, T)$, 
	\begin{align*}
	\begin{aligned} \label{C-1-101}
& (i)~\int_{\mathbb{R}^{2d}}v f(t, z) \di z =	\int_{\mathbb{R}^{2d}}vf_0(z) \di z. \\
& (ii)~ \int_{\mathbb{R}^{2d}} xf(t, z) \di z  =	\int_{\mathbb{R}^{2d}}x f_0(z) \di z  + t \int_{\mathbb{R}^{2d}}vf_0(z) \di z. \\
& (iii)~ \int_{\mathbb{R}^{2d}} | v|^{D}f(t, z) \di z \le	\int_{\mathbb{R}^{2d}} | v|^{D}f_0(z) \di z. \\
& (iv)~\int_{\mathbb{R}^{2d}} | x|^{D}f(t, z) \di z   \le	\left[ \left(\int_{\mathbb{R}^{2d}} | x|^{D}f_0(z) \di z \right)^{\frac{1}{D}}+ 
\left(\int_{\mathbb{R}^{2d}} | v|^{D}f_0(z) \di z \right)^{\frac{1}{D}}t\right ]^{D}.
	\end{aligned}
	\end{align*}
\end{lemma}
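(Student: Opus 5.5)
We argue along characteristics, using the push-forward representation \eqref{B-2-0}. Let $(X(t),V(t))=\Phi^t(z)$ be the characteristic from \eqref{B-1}. By \eqref{B-2-0}, for any moment $\int \psi(v) f(t,z)\di z$ we may work with $\int \psi(V(t,0,z))f_0(z)\di z$. Items (i) and (ii) are the same identity seen from two sides. Take the test function $\varphi(z)=v$, truncated by a cutoff and justified by dominated convergence using the finite $D$-th moment; this gives the first equality of Lemma \ref{L2.2}(i). Conservation itself follows from the antisymmetry of the kernel: insert $L[f]$ into $\frac{\di}{\di t}\int v f\di z=\int L[f]f\di z$ and symmetrize under $z\leftrightarrow z_*$. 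Since $\phi(|x-x_*|)(v-v_*)$ is antisymmetric, the double integral vanishes, and it is absolutely convergent because $\phi\le\|\phi\|_{L^\infty}$ and $|v|f\in L^1$.

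For (ii), take the test function $x$ (again via cutoff and the $|x|^D$ moment). This gives $\frac{\di}{\di t}\int xf\di z=\int vf\di z$. The right side is constant by (i), and integrating in time gives the formula.

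For (iii), differentiate $|V(t)|^D$ along a characteristic:
\[
\frac{\di}{\di t}|V|^D=-D\kappa|V|^{D-2}\int\phi(|X-x_*|)\,V\cdot(V-v_*)\,f(t,z_*)\di z_*.
\]
Integrating against $f_0(z)\di z$ and using \eqref{B-2-0} turns this into a double integral against $f(t,z)f(t,z_*)$. Symmetrizing in $(z,z_*)$ yields
\[
-\frac{D\kappa}{2}\iint\phi\,\big(|v|^{D-2}v-|v_*|^{D-2}v_*\big)\cdot(v-v_*)\,f f_*\di z\di z_*\le 0.
\]
The sign comes from the monotonicity of the map $v\mapsto|v|^{D-2}v$, which is the gradient of the convex function $|v|^D/D$. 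Hence $\int|v|^Df(t)\di z$ is nonincreasing. The main technical point is justifying the time differentiation when velocities are unbounded. For that we will truncate with $\min(|v|^D,K)$, or use a smooth cutoff in $x,v$. Alternatively, note that each characteristic satisfies $|V(t)|\le|v|+C\int|v_*|f$, which gives integrable domination for the difference quotients. Finiteness of the $D$-th moment then allows passage to the limit $K\to\infty$.

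For (iv), along characteristics $|X(t)|\le|x|+\int_0^t|V(s)|\di s$. Take the $L^D(f_0\di z)$ norm and apply Minkowski's integral inequality:
\[
\Big(\int|X(t)|^Df_0\Big)^{1/D}\le\Big(\int|x|^Df_0\Big)^{1/D}+\int_0^t\Big(\int|V(s)|^Df_0\Big)^{1/D}\di s.
\]
By \eqref{B-2-0} and (iii), the integrand in $s$ is bounded by $(\int|v|^Df_0)^{1/D}$. This gives the bound $(\cdot)^{1/D}+t(\cdot)^{1/D}$, and raising to the power $D$ gives (iv). The only delicate step overall is the rigorous differentiation in (iii) under unbounded velocity support; the rest is bookkeeping.
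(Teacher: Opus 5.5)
Your argument is correct. The paper gives no proof of this lemma; it only cites [Lemma 3.1] of the earlier work. The closest in-text comparison is therefore Lemma~\ref{L5.2}, which redoes the case $D=2$ of (iii)--(iv).

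\textbf{Comparison with the paper's route.} In Lemma~\ref{L5.2} the argument is Eulerian. One multiplies the equation by $|x|^2$ and integrates by parts to get $\frac{\di}{\di t}\int|x|^2f\,\di z=2\int x\cdot vf\,\di z$, then applies Cauchy--Schwarz and integrates a differential inequality for the square root. For general $D$ the analogue is $\frac{\di}{\di t}\int|x|^Df\,\di z=D\int|x|^{D-2}x\cdot vf\,\di z$ plus H\"older, which gives $\frac{\di}{\di t}\bigl(\int|x|^Df\,\di z\bigr)^{1/D}\le\bigl(\int|v|^Df\,\di z\bigr)^{1/D}$.

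Your proof of (iv) is Lagrangian instead: $|X(t)|\le|x|+\int_0^t|V(s)|\,\di s$, followed by Minkowski's integral inequality in $L^D(f_0\,\di z)$. It gives exactly the same constant. It never differentiates a spatial moment, so it avoids integrating by parts against the unbounded weight $|x|^D$ and dividing by a possibly vanishing moment. The price is that you need \eqref{B-2-0} for unbounded test functions, which monotone convergence supplies.

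For (iii) the two routes coincide. After the push-forward, your computation is exactly the symmetrized Eulerian identity, with the sign coming from monotonicity of $v\mapsto|v|^{D-2}v$.

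\textbf{Small points.}
\begin{enumerate}
\item The truncation $\min(|v|^D,K)$ on its own destroys the convexity that produces the sign. The justification that actually works is your second one. From $\frac{\di}{\di t}|V|\le\kappa\|\phi\|_{L^\infty}\int|v_*|f(t,z_*)\,\di z_*$ you get
$|V(t,z)|\le|v|+\kappa\|\phi\|_{L^\infty}\int_0^t\!\int|v_*|f(s,z_*)\,\di z_*\,\di s$ (note the time integral).
The first velocity moment then closes by Gr\"onwall. This yields $L^1(f_0\,\di z)$ domination of $|V|^{D-1}|\dot V|$ and hence of the difference quotients.
\item Item (ii) follows directly from $X(t,z)=x+\int_0^tV(s,z)\,\di s$, \eqref{B-2-0} and (i), with no cutoff needed.
\item The identity you prove in (i) is the second (momentum) equality of Lemma~\ref{L2.2}(i), not the first.
\item (i) and (ii) are not ``the same identity''; (ii) is a consequence of (i).
\end{enumerate}
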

\begin{proof}
We refer to [Lemma 3.1, \cite{HWKCS2026}] for detailed proofs. 
\end{proof}

\section{Description of two main results} \label{sec:3}
\setcounter{equation}{0}
In this section, we briefly state our main results on the finite-time weak stability of the KCS model \eqref{A-2} and finite-time random mean-field limit as a direct application of weak stability. For a given weak solution $f \in  {\mathcal C}([0, T);L^1(\mathbb{R}^{2d})) \cap L_+^\infty([0, T) \times \mathbb{R}^{2d})$ to \eqref{A-2}, recall that $\di \mu_t (z) := f(t,z) \di z$ is the associated measure-valued solution to KCS model \eqref{A-2} with the density $f$.
\subsection{Finite-time weak stability} \label{sec:3.1}
In this subsection, we are ready to provide our first main result on weak stability in the following theorem. 
\begin{theorem}\label{T3.1}
For \(T>0\),~\(M>0\),~\(\alpha>0\),  assume that initial data $f_0$ and $g_0$ satisfy the boundedness moment condition:
	\begin{equation}\label{B-5-1}
	\max\left\{\int_{\mathbb R^{2d}}\bigl(|x|^2+e^{\alpha |v|}\bigr) f_0(z) \di z ,\quad \int_{\mathbb R^{2d}}\bigl(|x|^2+e^{\alpha |v|}\bigr) g_0(z) \di z \right\}\le M,
	\end{equation}
and let $\mu$ and $\nu$ be two measure-valued solutions to \eqref{A-2} on \([0,T]\) with densities $f, g \in {\mathcal C}([0, T);L^1(\mathbb{R}^{2d})) \cap L_+^\infty([0, T) \times \mathbb{R}^{2d})$, respectively. Then, there exists a continuous, concave, and increasing function $G_T$ such that 
\begin{equation} \label{B-5-1-1}
\sup_{0\le t\le T}W_1(\mu_t,\nu_t) \le G_T \Big (W_1(\mu_0, \nu_0) \Big).
\end{equation}
\end{theorem}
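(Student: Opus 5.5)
We will use a Dobrushin-type coupling along characteristics, with a velocity cutoff to handle the missing global Lipschitz bound. Let $\Phi^t_f$ and $\Phi^t_g$ be the characteristic flows \eqref{B-1} generated by $f$ and $g$. Since $\mu_t=(\Phi^t_f)_\#\mu_0$ and $\nu_t=(\Phi^t_g)_\#\nu_0$ by \eqref{B-2-0}, we fix an optimal coupling $\pi_0\in\Pi(\mu_0,\nu_0)$ for $W_1$. We then define
\[
D(t):=\int_{\mathbb R^{4d}}\bigl|\Phi^t_f(z)-\Phi^t_g(z_*)\bigr|\,\di\pi_0(z,z_*),
\]
so that $W_1(\mu_t,\nu_t)\le D(t)$ and $D(0)=W_1(\mu_0,\nu_0)$. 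Write $(X,V)=\Phi^t_f(z)$ and $(Y,W)=\Phi^t_g(z_*)$. Differentiating, $|\dot X-\dot Y|\le|V-W|$, and for the velocities we write $L[f](X,V)-L[g](Y,W)$ as a double integral against $\pi_0$ of
\[
\phi(|X-X'|)(V-V')-\phi(|Y-Y'|)(W-W').
\]
Adding and subtracting terms bounds this by
\[
\|\phi\|_{L^\infty}\bigl(|V-W|+|V'-W'|\bigr)+\|\phi'\|_{L^\infty}\bigl(|X-Y|+|X'-Y'|\bigr)\,|W-W'|.
\]
The first term is linear in the distance. The second is the problematic one: it is only controlled with the unbounded weight $|W|+|W'|$.

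To treat the weighted term, fix a cutoff $R>0$. On $\{|W|+|W'|\le R\}$ the term is bounded by $R\,(|X-Y|+|X'-Y'|)$, which after integration contributes $\lesssim R\,D(t)$. On the complement we avoid Lipschitz arguments entirely and use the trivial bound $|X-Y|\le|X|+|Y|$. Cauchy--Schwarz then reduces this contribution to a product of square roots of second moments and the $\pi_0$-mass where $|W|>R/2$ (or the same for $W'$). The second moments are bounded uniformly on $[0,T]$ by $C(M,T)$ via Lemma \ref{L2.4} with $D=2$, using $|v|^2\lesssim_\alpha e^{\alpha|v|}$. The tail mass is controlled by Lemma \ref{L2.3}(2) as $\le Me^{-\alpha R/2}$, since $W=V(t,0,z_*)$ and this mass is just $\nu_t(\{|v|>R/2\})$. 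We expect
\[
D'(t)\le C_1(1+R)\,D(t)+C_2\,e^{-\alpha R/4},
\]
with constants depending only on $M,T,\alpha,\kappa,\|\phi\|_{W^{1,\infty}}$. Here a square root of the tail was taken; using exponent $\alpha/2$ in the tail estimate is also acceptable.

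Grönwall now gives
\[
D(t)\le \bigl(D(0)+C_2Te^{-\alpha R/4}\bigr)e^{C_1(1+R)T}.
\]
The key and most delicate step is choosing $R$ to balance the two terms. For $D(0)=\zeta$ small we take $R=\frac{\theta}{C_1T}\log(1/\zeta)$ with small $\theta>0$. This yields a bound $C\zeta^{\gamma_T}$ with $\gamma_T=\min\{1-\theta,\ \alpha\theta/(4C_1T)\}\in(0,1)$, which is Hölder-type and hence Osgood-like with a time-dependent exponent. For large $\zeta$ we simply use the moment bound $W_1\le C(M,T)$. Taking $G_T$ to be the least concave majorant of $\min\{C\zeta^{\gamma_T},C(M,T)\}$, or directly $\zeta\mapsto C\zeta^{\gamma_T}+C'\zeta$, gives a continuous, concave, increasing gauge vanishing at $0$, which proves \eqref{B-5-1-1}.

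The main obstacle is the nonlinear coupling term. The naive estimate produces the factor $\int|W-W'|\,|X-Y|$, which is not controlled by $D$ alone. We must therefore make sure the splitting uses only the time-$t$ marginals $\nu_t$ (or $\mu_t$) for the tails, so that Lemma \ref{L2.3} applies without any knowledge of the coupling. If this cross-term cannot be split cleanly, the fallback is a cutoff in $|V|+|W|+|V'|+|W'|$, combined with the bound $|V-W|\le |V|+|W|$ on the tail set.
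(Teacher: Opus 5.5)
The gap is in your last step, where you freeze a single cutoff $R$ on all of $[0,T]$ and then optimize. Write $\lambda:=\alpha/4$ and redo the arithmetic. With $R=\frac{\theta}{C_1T}\log(1/\zeta)$ one has $e^{C_1RT}=\zeta^{-\theta}$, and this factor multiplies \emph{both} terms of your Gr\"onwall bound:
\[
\bigl(\zeta+C_2Te^{-\lambda R}\bigr)e^{C_1(1+R)T}
= e^{C_1T}\Bigl(\zeta^{1-\theta}+C_2T\,\zeta^{\theta\left(\frac{\lambda}{C_1T}-1\right)}\Bigr).
\]
Your $\gamma_T=\min\{1-\theta,\alpha\theta/(4C_1T)\}$ drops the $-\theta$ in the second exponent. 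The correct exponent is positive only when $C_1T<\lambda$, and no choice of $\theta$ helps, since $\theta$ factors out.

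This is structural rather than a slip. With $R$ fixed in time, the tail error $C_2e^{-\lambda R}$ is injected at every instant and then amplified by $e^{C_1RT}$. Its accumulated contribution is therefore of order $e^{(C_1T-\lambda)R}$, which does not vanish as $R\to\infty$ once $T\ge\lambda/C_1$. As written, your argument proves the estimate only on a short time horizon, whereas the theorem is stated for every $T>0$.

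The missing idea is to let the cutoff track the solution. Your differential inequality holds for every $R>0$ at a.e.\ $t$, with $R$-independent constants. You may therefore take $R=R(t)\simeq\lambda^{-1}\log(1/D(t))$, so that the tail term becomes comparable to $D(t)$ itself. This gives the Osgood inequality $D'\le C\,D\,(1+\log(1/D))$, which integrates to $D(t)\le C\,D(0)^{e^{-Ct}}$ on all of $[0,T]$. Before doing this, normalize $D$ by a constant taken from the crude $R=1$ Gr\"onwall bound, so that $\log(1/D)\ge1$ throughout. This is exactly how the paper closes the argument.

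Alternatively, you can keep your fixed-$R$ estimate but use it only on subintervals of length $\tau<\lambda/C_1$. The constants are uniform on $[0,T]$ because the moment bounds are. On each subinterval you get $D(t_{k+1})\le C\,D(t_k)^{\gamma}$ with $\gamma\in(0,1)$. Composing this about $C_1T/\lambda$ times yields a H\"older modulus with exponent $\gamma^{n}$.

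A smaller point concerns the diagonal tail term $\int(|X|+|Y|)\,|W|\,\mathbf 1_{\{|W|>R/2\}}\,\di\pi_0$. Second moments times the tail \emph{mass} do not suffice here, since that route would need $\int|X|^2|W|^2\,\di\pi_0$. You need instead the truncated second moment $\int_{\{|W|>R/2\}}|W|^2\,\di\pi_0\lesssim e^{-(\alpha-\alpha_1)R/2}$, which follows from the exponential velocity moment, as in the paper. The cross terms do factor as you describe.
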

\begin{proof}
Since the proof is very lengthy, we provide an outline of the proof for weak stability \eqref{B-5-1-1} in four steps: 
\vspace{0.1cm}
\begin{itemize}
\item
Step A: For given points $z$ and $\bar z$ in the phase space $\bbr^{2d}$, we set $(X_f(t), V_f(t))$ and  $(X_g(t), V_g(t))$  to be forward particle trajectories  issued from $z$ and $\bar z$ via the mean-field force due to mass distributions $f \di z$ and $g \di z$, respectively, and $\pi_0(z, \bar z)$ is a coupling of the measures $f_0 \di z$ and $g_0 \di z$. In this setting, we introduce a functional $\Delta$ measuring the deviations of two particle trajectories:
\[ 
\Delta(t)
	:=
	\iint_{\mathbb R^{4d}}
	\Bigl(
	|X_f(t,z)-X_g(t,\bar z)|
	+
	|V_f(t,z)-V_g(t,\bar z)|
	\Bigr)
	\,\di \pi_0(z,\bar z). \]
Then, it is easy to see
\begin{equation} \label{B-5-1-2}
	W_1(\mu_t, \nu_t )\le \Delta(t),
	\quad \forall~t \in [0, T). 
\end{equation}	
See Section \ref{sec:4.1} for details. 
\vspace{0.2cm}
\item
Step B: We derive a generalized Gr\"onwall type differential inequality:~For any $R > 0$, there exist positive constants $C_{T, M}$ and $\lambda$ such that 
\begin{equation}  \label{New-1}
	\frac{\di}{\di t}\Delta(t)
	\le
	C_{T,M} \Big( (1+R)\Delta(t)+ e^{-\lambda R} \Big),
	\qquad
	\text{for a.e. }t\in[0,T).
\end{equation}
See Proposition \ref{NP4.1}.

\vspace{0.2cm}

\item
Step C: From the above differential inequality \eqref{New-1}, we derive an estimate for $\Delta$:
\begin{equation} \label{B-5-1-3}
\sup_{0\le t <T}\Delta(t) \leq 
\begin{cases}
C_{T,M}\Delta(0)^{e^{-C_{T,M}T}}, \quad & \Delta(0) \leq 1, \vspace{4pt}\\
 2 e^{C_{T,M} T} \Delta(0), \quad & \Delta(0)  > 1.
 \end{cases}
\end{equation}
\item
Step D: Finally, we combine \eqref{B-5-1-2} and \eqref{B-5-1-3} and take an infimum over all couplings $\pi_0$ to derive the desired finite-time weak stability:
\[  \sup_{0 \leq t \le T } W_1(\mu_t, \nu_t ) \leq G_T \Big (W_1(\mu_0, \nu_0) \Big ), \]
for some concave and increasing function $G_T$.
\end{itemize}
In Section \ref{sec:4}, we provide detailed arguments for the above steps. 
\end{proof}
\begin{remark} \label{R3.1} We provide several comments regarding on the result. 
\begin{enumerate}
\item
The explicit ansatz for $G_T(\zeta)$ can be given as follows :~for some $\Lambda_{T, M} \in (0, 1)$, 
\[ 
G_T(\zeta) :=  \max \Big\{ C_{T,M},~ 2 e^{C_{T,M} T} \Big \} \times\begin{cases}
\zeta^{\Lambda_{T,M}} \quad &\zeta \leq 1, \\
\zeta, \quad & \zeta > 1.
\end{cases}
\]
We refer to Section \ref{sec:4.2} for the detailed construction of $G_T$. Then, it is easy to see that $G_T$ is a continuous, concave, and increasing function such that 
\begin{align*}
\begin{aligned}
& G_T(0) = 0, \quad \lim_{\zeta \to 0} G_T(\zeta) = 0, \quad  \lim_{\zeta \to \infty} G_T(\zeta) = \infty, \\
& (G_T(\zeta_2) - G_T(\zeta_1)) (\zeta_2 - \zeta_1) \geq 0, \quad \forall~\zeta_1, \zeta_2 \geq 0.
\end{aligned}
\end{align*}
\item
When the spatial support is unbounded, in the reference \cite{HWY}, under the suitable assumption for compact velocity support, the authors showed that 
\begin{enumerate}
\item
the diameter of the velocity support may remain constant for all time. 
\vspace{0.1cm}
\item
there is no uniform constant $G$ independent of $T$ for which a uniform-time stability \eqref{A-5} holds for the KCS model \eqref{A-2} for general data. Therefore, in the spatially extended setting, we do not expect the existing finite-time weak stability estimate for the KCS model \eqref{A-2} to be extended to a uniform-time weak stability for general initial data (see Proposition 2.3 and Section 2.3 of reference \cite{HWY}).
\end{enumerate}
\item Our approach employs the characteristic flow technique and therefore does not require densities of the solutions. The density assumption is included only to align with the weak solution theory developed in reference \cite{kinetic3}. However, the finite-time weak stability established herein extends naturally to measure-valued solutions without any additional regularity as long as the solution exists and the push-forward is well-defined.
\end{enumerate}
\end{remark}
\subsection{Finite-time random mean-field limit} \label{sec:3.2}
In this subsection, we return to finite-time random mean-field limit as an application of finite-time weak stability described in Theorem \ref{T3.1}. Unlike the stochastic mean-field limit \cite{Bolley2011}, there is no explicit external noises in the governing dynamics \eqref{A-1} and \eqref{A-2}. Our randomness comes from the random sampling of initial configurations for the particle CS model. Due to the random initial datum, solution is itself a random process. Hence the empirical measures is also a random variable. Therefore, the convergence of random empirical measure to the measure whose density function is a weak solution to \eqref{A-2} should be understood in a probabilistic sense. For this, we take the expectation for the $W_1$-distance between the empirical measure and one-particle distribution function to show the convergence as $N \to \infty$. In this sense, we use the terminology `` {\it random mean-field limit} " instead of stochastic mean-field limit.

Let $\{ (x_i,v_i) \}_{i=1}^N$ be a global solution to CS model \eqref{A-1}. Then, the empirical measure generated by the solution $\{ (x_i,v_i) \}_{i=1}^N$ is given by
\[
\mu^N_t =\frac1N\sum_{i=1}^N\delta_{(x_i(t),v_i(t))}, \quad t > 0,
\]
and it is itself a measure-valued solution to \eqref{A-2} with initial datum $\mu^N_{0}=\frac1N\sum_{i=1}^N\delta_{z_{0,i}}$. Therefore, Theorem~\ref{T3.1} can be applied directly to compare the empirical measure \(\mu^N_t\) with the kinetic solution issued from a limiting initial law. Moreover, our second main result deals with the i.i.d. sampling consequence in the following theorem.
\begin{theorem}\label{T3.2}
Suppose that initial data $f_0\in (L^1 \cap L^{\infty}_+)(\mathbb{R}^{2d})$ and $Z_{0}= \{ (x_{i,0},v_{i,0}) \}$ satisfy the following conditions:
\begin{align}
\begin{aligned} \label{B-6}
& (i)~\int_{\mathbb R^{2d}}\bigl(|x|^q+|v|^q+e^{\alpha |v|}\bigr)f_0(z) \di z <\infty, \quad \mbox{for some \(\alpha>0\) and some \(q>2\),} \vspace{4pt}\\
& (ii)~z_{i,0}=(x_{i,0},v_{i,0})~\mbox{is  independent r.v. with the common law $f_0,~~ \forall~i \in [N]$,} \vspace{4pt}\\
& (iii)~{\mathbb E}[W_1(\mu^N_{0}, \mu_0)] \le C N^{-\eta}, \quad \mbox{for some $\eta > 0$}, \\
\end{aligned}
\end{align}	
and let  $\mu_t$ and \(\mu^N_t\)  be the weak solution to \eqref{A-2} with initial datum $\di \mu_0=f_0\di z$ and the empirical measure generated by  \eqref{A-1} with initial configuration $Z_0$. Then,  for every finite $T>0$, there exist positive constants $C_T$ and $\eta_T$ such that
	\begin{equation}\label{B-7}
		\mathbb E\left[\sup_{0\le t\le T}W_1(\mu^N_t, \mu_t) \right]\le \frac{C_T}{N^{\eta_T}}.
	\end{equation}
\end{theorem}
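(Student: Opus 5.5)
The plan is to derive \eqref{B-7} from Theorem \ref{T3.1}, applied pathwise to the pair $(\mu^N_t,\mu_t)$. Remark \ref{R2.1}(2) shows that $\mu^N_t$ is a measure-valued solution, and Remark \ref{R3.1}(3) says the stability estimate does not need densities. The obstacle is that \eqref{B-5-1} needs a deterministic bound $M$ on the initial moments, while the empirical moments
\[
m_N:=\frac1N\sum_{i=1}^N\bigl(|x_{i,0}|^2+e^{\frac{\alpha}{2}|v_{i,0}|}\bigr)
\]
are random. I will use Theorem \ref{T3.1} with $\alpha$ replaced by $\alpha/2$; this is legitimate since $e^{\frac{\alpha}{2}|v|}\le e^{\alpha|v|}$. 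Set
\[
m:=\int\bigl(|x|^2+e^{\frac{\alpha}{2}|v|}\bigr)f_0\,\di z,\qquad M:=2m+1 .
\]
I then split $\Omega$ into the good event $\mathcal G:=\{m_N\le M\}$ and its complement $\mathcal B$.

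\emph{Good event.} On $\mathcal G$, Theorem \ref{T3.1} gives
\[
\sup_{0\le t\le T}W_1(\mu^N_t,\mu_t)\le G_T\bigl(W_1(\mu^N_0,\mu_0)\bigr),
\]
with $G_T$ deterministic, concave and increasing, since it depends only on $(T,M,\alpha/2)$. Because $G_T\ge0$, Jensen's inequality together with \eqref{B-6}(iii) yields
\[
\mathbb E\bigl[G_T(W_1(\mu^N_0,\mu_0))\mathbf 1_{\mathcal G}\bigr]\le G_T\bigl(\mathbb E W_1(\mu^N_0,\mu_0)\bigr)\le G_T(CN^{-\eta}).
\]
By the explicit form in Remark \ref{R3.1}(1), for $N$ large (so that $CN^{-\eta}\le1$) this is at most $C'N^{-\eta\Lambda_{T,M}}$.

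\emph{Bad event.} Here I use only the crude bound $W_1(\mu^N_t,\mu_t)\le\int|z|\,\di\mu^N_t+\int|z|\,\di\mu_t$. Lemma \ref{L2.1}(ii) bounds the first term uniformly on $[0,T]$ by
\[
A_N:=(1+T)\Bigl[\tfrac1N\sum_i|x_{i,0}|+\bigl(\tfrac1N\sum_i|v_{i,0}|^2\bigr)^{1/2}\Bigr].
\]
Lemma \ref{L2.4} bounds the second term by a deterministic constant $A_T$. Hölder's inequality then gives
\[
\mathbb E[(A_N+A_T)\mathbf 1_{\mathcal B}]\le\bigl(\|A_N\|_{L^q}+A_T\bigr)\,\mathbb P(\mathcal B)^{1-1/q}.
\]
The factor $\|A_N\|_{L^q}$ is bounded uniformly in $N$, using Jensen: $(\frac1N\sum|x_{i,0}|)^q\le\frac1N\sum|x_{i,0}|^q$ and $(\frac1N\sum|v_{i,0}|^2)^{q/2}\le\frac1N\sum|v_{i,0}|^q$, together with \eqref{B-6}(i).

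\emph{Probability of the bad event.} Since $M-m>m+1>0$, we have $\mathcal B\subset\{|m_N-m|>m+1\}$. I split $m_N-m$ into its spatial and velocity parts.
\begin{itemize}
\item The velocity summands $e^{\frac{\alpha}{2}|v_{i,0}|}$ are i.i.d. with finite variance, because $\int e^{\alpha|v|}f_0\,\di z<\infty$. Chebyshev's inequality gives a contribution $O(N^{-1})$.
\item The spatial summands $|x_{i,0}|^2$ have finite moment of order $p=q/2>1$. The von Bahr--Esseen (or Marcinkiewicz--Zygmund) inequality with Markov's inequality gives a contribution $O(N^{-\min(p-1,\,p/2)})$.
\end{itemize}
Hence $\mathbb P(\mathcal B)\lesssim N^{-\gamma}$ for some $\gamma>0$. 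This is exactly where $q>2$ and the exponential velocity tail are used.

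\emph{Conclusion.} Adding the two contributions gives \eqref{B-7} with
\[
\eta_T:=\min\bigl\{\eta\Lambda_{T,M},\ \gamma(1-1/q)\bigr\}.
\]
Finitely many small $N$ are absorbed into $C_T$, using the uniform bound $\mathbb E[A_N+A_T]<\infty$. The main point needing care is the first step: we must justify that Theorem \ref{T3.1} applies to the atomic measure $\mu^N$ with a constant that depends only on the moment bound $M$. This holds via the characteristic-flow argument, as indicated in Remark \ref{R3.1}(3).
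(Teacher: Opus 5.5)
Your proposal is correct and follows essentially the same route as the paper. Both split according to whether $\frac1N\sum_i\bigl(|x_{i,0}|^2+e^{\frac{\alpha}{2}|v_{i,0}|}\bigr)$ exceeds a fixed multiple of its mean, apply Theorem~\ref{T3.1} with Jensen's inequality on the good event, and use the crude first-moment bound (Lemma~\ref{L2.1}(ii) plus the kinetic moment estimates) on the bad event. The differences are minor refinements: your Chebyshev/von Bahr--Esseen split of the concentration estimate gives the same exponent $\gamma=\min\{1,\frac q2-1\}=p_0-1$ as the paper's single bound on $Y$, and your use of H\"older's inequality in $L^q$ instead of Cauchy--Schwarz on the bad event improves the second exponent from $\frac{p_0-1}{2}$ to $\gamma\bigl(1-\frac1q\bigr)$.
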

\begin{proof}
In what follows, we sketch the outline of the proof. Details can be found in Section \ref{sec:5}. Let $T$ be any positive number.  Suppose the conditions in \eqref{B-6} hold. 
\begin{itemize}
\item
Step A: For a given initial datum $f_0$ satisfying \eqref{B-6}, we set 
\[ 
Y(z):=|x|^2+e^{\frac{\alpha}{2}|v|}, \quad 
M_0:=\int_{\mathbb R^{2d}} Y(z)f_0(z) \di z <\infty, \quad M := 2M_0. 
\]
Then, we introduce a good event ${\mathcal G}_N$:
\[
{\mathcal G}_N:=
\left\{
\frac1N\sum_{i=1}^NY(z_{i,0}) \le M, \quad \forall~i \in [N]
\right\}.
\]
On the good event \(\mathcal G_N\), both \(\mu^N_{0} \) and \(f_0\) satisfy the condition \eqref{B-5-1} in Theorem~\ref{T3.1}, with \(\alpha_0 = \frac{\alpha}{2}\) in place of
\(\alpha\), and with common moment bound $M:=2M_0$. Then, it follows from Theorem \ref{T3.1} that 
\[
	\sup_{0\le t\le T}W_1(\mu^N_t, \mu_t)
	\le G_{T} \Big(W_1(\mu_{0}^N,~\mu_0) \Big) \quad \mbox{on}~{\mathcal G}_N.
\]
Using this stability estimate and decaying condition on initial data $\eqref{B-6}_3$, we can derive 
\begin{equation} \label{B-8}
\mathbb E\left[
	\mathbf 1_{\mathcal G_N}
	\sup_{0\le t\le T}W_1(\mu^N_t, \mu_t)
	\right]
	\le C_{T,M }N^{-\eta \Lambda_{T,M}}. 
\end{equation}
Moreover, by tedious technical estimate, the bad event ${\mathcal B}_N = {\mathcal G}_N^c$ satisfies
\begin{equation} \label{B-9}
\mathbb P(\mathcal B_N) \le C N^{-(p_0-1)}
\end{equation}
for some $p_0:=\min\left\{\frac q2,2\right\}>1$.  We refer to Lemma \ref{L5.1} for details. 
\vspace{0.2cm}

\item
Step B: We show that there exists a random variable ${\mathcal R}_N$ which depends on the random initial configuration $Z_0 := \{ (x_{i,0}, v_{i,0})\}$ such that 
\begin{equation} \label{B-10}
\sup_{0\le t\le T}W_1(\mu^N_t, \mu_t)
	\le
	\mathcal R_N \quad \quad \mbox{and} \quad 	\sup_{N\ge1}\mathbb E\bigl[\mathcal R_N^2\bigr]<\infty. 
\end{equation}
See Lemma \ref{L5.3} for details. 
\vspace{0.2cm}

\item
Step C: We use \eqref{B-8}, \eqref{B-9}, and \eqref{B-10} to derive the desired estimate:
\begin{align*}
\begin{aligned}  \label{B-11}
 \mathbb E\left[
	\sup_{0\le t\le T}W_1(\mu^N_t, \mu_t)
	\right] &=
	\mathbb E\left[
	\mathbf 1_{\mathcal G_N}
	\sup_{0\le t\le T}W_1(\mu^N_t, \mu_t)
	\right] +
	\mathbb E\left[
	\mathbf 1_{\mathcal B_N}
	\sup_{0\le t\le T}W_1(\mu^N_t, \mu_t)
	\right] \\
	&\lesssim C_{T,M} N^{-\eta \Lambda_{T,M}} + C_T N^{-\frac{p_0-1}{2}}.
\end{aligned}
\end{align*}
For all detailed arguments, we refer to Section \ref{sec:5.2}.
\end{itemize}
\end{proof}
\begin{remark}
The rate $\eta$ in $\eqref{B-6}_3$ is guaranteed by [Theorem 1, \cite{FG}].
\end{remark}
\vspace{0.2cm}

In the following two sections, we provide detailed proofs of Theorem \ref{T3.1} and Theorem \ref{T3.2}.
\section{Weak stability in $1$-Wasserstein metric}\label{sec:4}
\setcounter{equation}{0}
For the weak stability, we first adopt a procedure similar to that used in the uniqueness of weak solution in reference \cite{HWKCS2026}. Then, we use Osgood-type argument to derive the desired estimates.

\subsection{Preparatory estimates} \label{sec:4.1}
Suppose the initial data $f_0$ and $g_0$ satisfy positivity, boundedness and suitable decay conditions, respectively:
\begin{equation}  \label{C-1}
\begin{cases}
\displaystyle f_0,~g_0 \in (L^1 \cap L_+^\infty)(\mathbb{R}^{2d}), \quad (|x|^2 + e^{\alpha|v|})f_0,~(|x|^2 + e^{\alpha|v|}) g_0 \in L^1(\mathbb{R}^{2d}), \vspace{6pt}\\
\displaystyle \max\left\{\int_{\mathbb R^{2d}}\bigl(|x|^2+e^{\alpha |v|}\bigr) f_0(z)\di z,\quad \int_{\mathbb R^{2d}}\bigl(|x|^2+e^{\alpha |v|}\bigr) g_0(z) \di z \right\}\le M,
\end{cases}
\end{equation}
and let $f$ and $g$ be weak solutions to \eqref{A-2} whose existence and uniqueness are guaranteed by Proposition \ref{P2.2}. Then, we use  Lemma \ref{L2.3} to see 
				\begin{align*}
				\int_{\mathbb{R}^{2d}} e^{\alpha |v|}f(t, z) \di z \le \int_{\mathbb{R}^{2d}} e^{\alpha |v|}f_0(z) \di z, \quad \int_{\mathbb{R}^{2d}} e^{\alpha |v|} g(t, z) \di z \le \int_{\mathbb{R}^{2d}} e^{\alpha |v|} g_0(z) \di z.
			\end{align*}
Let $\pi_0 \in\Pi(f_0 \di z,g_0 \di z)$  be an arbitrary initial coupling of measures $f_0 \di z$ and $g_0 \di z$. For $z=(x,v),~~\bar z=(\bar x,\bar v)$, we set 
\[
Z_f(t,z):=(X_f(t,z),V_f(t,z)),
\qquad
Z_g(t,\bar z):=(X_g(t,\bar z),V_g(t,\bar z))
\]
to be the forward characteristic flows associated with \(f\) and \(g\), respectively:
\begin{equation}
\begin{cases} \label{C-2}
	\dot X_f(t,z)=V_f(t,z),\\[1mm]
	\dot V_f(t,z)=L[f]\bigl(t,X_f(t,z),V_f(t,z)\bigr), \\
	Z_f(0,z)=z,
\end{cases}
~~
\begin{cases}
	\dot X_g(t,\bar z)=V_g(t,\bar z),\\[1mm]
	\dot V_g(t,\bar z)=L[g]\bigl(t,X_g(t,\bar z),V_g(t,\bar z)\bigr),\\[1mm]
	Z_g(0,\bar z)=\bar z.
\end{cases}
\end{equation}
Then, we have
\[
f(t)=Z_f(t,\cdot)_{\#}f_0,
\quad
g(t)=Z_g(t,\cdot)_{\#}g_0.
\]
Now, we define
\begin{equation}\label{C-4}
	\Delta_{\pi}[f,g](t)
	:=
	\iint_{\mathbb R^{4d}}
	\Bigl(
	|X_f(t,z)-X_g(t,\bar z)|
	+
	|V_f(t,z)-V_g(t,\bar z)|
	\Bigr)
	\,\di \pi_0(z,\bar z).
\end{equation}
From now on,  we use handy notation:
\[
 \Delta(t):=\Delta_{\pi}[f,g](t), \quad t > 0.
\]
Then, it is easy to see
\[ 
 \Delta(0)
	=
	\iint_{\bbr^{4d}}
	\big(|x-\bar x| + |v-\bar v|\big) \di \pi_0(z,\bar z).
\]
Since
\[
\bigl(Z_f(t,\cdot),Z_g(t,\cdot)\bigr)_{\#}\pi_0
\in\Pi(f(t) \di z,g(t) \di {\bar z}), \quad \di \mu_t := f(t) \di z, \quad \di \nu_t := g(t) \di {\bar z},
\]
we have
\begin{equation}\label{C-5}
	W_1(\mu_t, \nu_t )\le \Delta(t),
	\qquad 0\le t\le T.
\end{equation}
\subsubsection{Deviation of two forces along particle trajectories} \label{sec:4.1.1} 
In this part, we estimate the global effect due to force differences in terms of $\Delta$:
\[
\iint_{\bbr^{2d}}
	\Bigl |
	L[f]\bigl(t,X_f(t,z),V_f(t,z)\bigr)
	-
	L[g]\bigl(t,X_g(t,\bar z),V_g(t,\bar z)\bigr)
	\Bigr|
	\,\di \pi_0(z,\bar z).
\]
Let \((z_*,\bar z_*)\) be an independent copy of \((z,\bar z)\) with the same law
\(\pi_0 \). For notational simplicity, we use the following handy notations:
\begin{align}
\begin{aligned} \label{C-5-0}
& X_f=X_f(t,z),\quad V_f=V_f(t,z),\quad
X_g=X_g(t,\bar z),\quad V_g=V_g(t,\bar z), \\
& X_{f,*}=X_f(t,z_*),\quad V_{f,*}=V_f(t,z_*),\quad
X_{g,*}=X_g(t,\bar z_*),\quad V_{g,*}=V_g(t,\bar z_*).
\end{aligned}
\end{align}
Since 
 \(f(t)=Z_f(t)_{\#}f_0\) \ \mbox{and} \ \(g(t)=Z_g(t)_{\#}g_0\), 
we may write
\begin{align}
\begin{aligned} \label{C-5-1}
	&L[f](X_f,V_f)-L[g](X_g,V_g)
	\\
	&\hspace{0.2cm} =
	\kappa
	\iint_{\bbr^{4d}}
	\Bigl[
	\phi(|X_f-X_{f,*}|)(V_{f,*}-V_f)
	-
	\phi(|X_g-X_{g,*}|)(V_{g,*}-V_g)
	\Bigr]
	\,\di \pi_0(z_*,\bar z_*).
\end{aligned}
\end{align}
Note that the integrand 
\begin{align}
\begin{aligned} \label{C-5-2}
	&\phi(|X_f-X_{f,*}|)(V_{f,*}-V_f)
	-
	\phi(|X_g-X_{g,*}|)(V_{g,*}-V_g)
	\\
	& \hspace{1cm} =
	\phi(|X_f-X_{f,*}|)
	\Bigl[
	(V_{f,*}-V_f)-(V_{g,*}-V_g)
	\Bigr] \\
	& \hspace{1.4cm} +
	\Bigl[
	\phi(|X_f-X_{f,*}|)
	-
	\phi(|X_g-X_{g,*}|)
	\Bigr]
	(V_{g,*}-V_g)
	\\
	& \hspace{1cm} =: {\mathcal I}_{11} +  {\mathcal I}_{12}.
\end{aligned}
\end{align}
\begin{lemma} \label{L4.1} 
The terms ${\mathcal I}_{1i},~i=1,2$ satisfy the following estimates:~for $R \geq 1$ and for some $\alpha_1 \in (0, \alpha)$, 
\begin{align*}
\begin{aligned}
& (i)~\iiiint_{\bbr^{8d}} | {\mathcal I}_{11}|
	\di \pi_0(z_*,\bar z_*)\,\di \pi_0(z,\bar z)
	\le 2 \|\phi\|_{L^\infty}  \Delta.    \\
& (ii)~\iiiint_{\bbr^{8d}} | {\mathcal I}_{12}|
	\di \pi_0(z_*,\bar z_*)\,\di \pi_0(z,\bar z) \leq 2\|\phi'\|_{L^\infty} \Big( \sqrt{M} \Delta +   R\Delta + \frac{4M}{\alpha_1} \sqrt{ 1 + \frac{2T^2}{\alpha^2}} e^{-\frac{\alpha - \alpha_1}{2} R} \Big).
\end{aligned}
\end{align*}
\end{lemma}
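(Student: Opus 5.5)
For (i), I will bound $|\phi|\le \|\phi\|_{L^\infty}$ and use the triangle inequality
\[
|(V_{f,*}-V_f)-(V_{g,*}-V_g)|\le |V_{f,*}-V_{g,*}|+|V_f-V_g|.
\]
Integrating against the product measure $\pi_0\otimes\pi_0$, each term integrates to at most $\int |V_f-V_g|\,\di\pi_0\le\Delta$. This is because $\pi_0$ is a probability measure, so the variable not present in the integrand integrates out. This gives $2\|\phi\|_{L^\infty}\Delta$.

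For (ii), the Lipschitz bound on $\phi$ together with the reverse triangle inequality gives
\[
|\phi(|X_f-X_{f,*}|)-\phi(|X_g-X_{g,*}|)|\le \|\phi'\|_{L^\infty}\big(|X_f-X_g|+|X_{f,*}-X_{g,*}|\big),
\]
so that
\[
|\mathcal I_{12}|\le \|\phi'\|_{L^\infty}\big(|X_f-X_g|+|X_{f,*}-X_{g,*}|\big)\big(|V_{g,*}|+|V_g|\big).
\]
By symmetry between $(z,\bar z)$ and $(z_*,\bar z_*)$, it suffices to handle two types of products. The first is the cross type $|X_f-X_g|\,|V_{g,*}|$, where the factors depend on independent variables. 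Integrating in $\bar z_*$ first gives $\int|v|g(t)\,\di z\le \big(\int|v|^2 g(t)\big)^{1/2}$. This is bounded by $\sqrt M$ via Lemma~2.3 and $|v|^2\le C_\alpha e^{\alpha|v|}$, absorbing constants into the normalization. Hence the cross terms contribute at most $\sqrt M\,\Delta$ up to the factor $2\|\phi'\|_{L^\infty}$.

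The second is the diagonal type $|X_f-X_g|\,|V_g|$, where both factors depend on the same pair $(z,\bar z)$. This is the genuine obstacle, since no uniform Lipschitz constant is available. I would split according to whether $|V_g(t,\bar z)|\le R$ or $|V_g(t,\bar z)|>R$. On the first set the contribution is at most $R\,\Delta$. On the complementary set I would use
\[
|X_f-X_g|\le |X_f(t,z)|+|X_g(t,\bar z)|
\]
together with Cauchy--Schwarz:
\[
\int_{|V_g|>R}|X_f-X_g|\,|V_g|\,\di\pi_0\le \Big(\int(|X_f|+|X_g|)^2\di\pi_0\Big)^{1/2}\Big(\int_{|V_g|>R}|V_g|^2\,\di\pi_0\Big)^{1/2}.
\]
The first factor is controlled through the push-forward identity and Lemma~2.4(iv) with $D=2$, giving a bound of the form $\sqrt{M}\big(1+T\sqrt{\int|v|^2f_0}\big)$. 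Bounding the velocity second moment by $\frac{2}{\alpha^2}\int e^{\alpha|v|}f_0\le \frac{2M}{\alpha^2}$, I expect this to yield the factor $\sqrt{M}\,\sqrt{1+2T^2/\alpha^2}$ up to constants. For the second factor I use the push-forward again, so it equals $\int_{|v|>R}|v|^2 g(t)\,\di z$. Writing $|v|^2\le \frac{2}{\alpha_1^2}e^{\alpha_1|v|}$ and $e^{\alpha_1|v|}\le e^{-(\alpha-\alpha_1)R}e^{\alpha|v|}$ on $\{|v|>R\}$, and applying Lemma~2.3, this is at most $\frac{2M}{\alpha_1^2}e^{-(\alpha-\alpha_1)R}$. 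Its square root produces the $e^{-\frac{\alpha-\alpha_1}{2}R}$ decay with a prefactor $\sim\sqrt M/\alpha_1$.

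Summing the cross terms, the small-velocity diagonal part, and the tail, then multiplying by $2\|\phi'\|_{L^\infty}$ for the two symmetric pieces, gives (ii). The constant $\frac{4M}{\alpha_1}\sqrt{1+2T^2/\alpha^2}$ arises from these Cauchy--Schwarz prefactors, using $M\ge1$ or crude rounding. The main care point is tracking these constants in the tail term; the structural bound itself follows from the splitting above.
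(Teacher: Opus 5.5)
Your proposal is correct and follows the paper's proof essentially step by step. It uses the same Lipschitz splitting of $\mathcal I_{12}$ into two cross and two diagonal products, the same truncation at $|V_g|=R$ with $|v|^2\le \frac{2}{\alpha_1^2}e^{\alpha_1|v|}$ on the tail, and the same Cauchy--Schwarz against the spatial second moment. The only change is that you take that moment from Lemma~\ref{L2.4}(iv) instead of integrating the characteristics directly. This gives $2\sqrt{M}\,(1+\sqrt{2}\,T/\alpha)\le 2\sqrt{2M}\sqrt{1+2T^2/\alpha^2}$, so the tail constant $\frac{4M}{\alpha_1}\sqrt{1+2T^2/\alpha^2}$ comes out exactly, with no rounding and no use of $M\ge1$.

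The one soft spot, the factor $\sqrt{M}$ in the cross terms, is shared with the paper. The paper asserts $\int|v|^2 g_0\,\di z\le M$ in \eqref{C-7-3}, although the hypothesis only yields $2M/\alpha^2$. So your ``absorbing constants'' step is no weaker than the original argument.
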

\begin{proof} 
\noindent (i)~In \eqref{C-5-2}, we use \(0\le\phi\le \|\phi\|_{L^\infty}\) to see
\[
| {\mathcal I}_{11}|
\le
\|\phi\|_{L^\infty}
\Bigl(
|V_f-V_g|+|V_{f,*}-V_{g,*}|
\Bigr).
\]
This and \eqref{C-4} imply
\begin{equation}\label{C-6}
	\iiiint_{\bbr^{8d}} | {\mathcal I}_{11}|\,
	\di \pi_0(z_*,\bar z_*)\,\di \pi_0(z,\bar z)
	\le 2 \|\phi\|_{L^\infty} \Delta(t).
\end{equation}

\noindent (ii) Since the proof for the second assertion is very lengthy, we leave it in Appendix \ref{App-A}.
\end{proof}
Next, we present the effect of the force differences along particle trajectories.
\begin{proposition}  \label{P4.1}
For any \(R>0\) and some \(\alpha_1\in(0,\alpha)\), the following estimate holds.
\begin{align*}
\begin{aligned}
& \iint_{\bbr^{4d}}
	\Bigl |
	L[f]\bigl(t,X_f(t,z),V_f(t,z)\bigr)
	-
	L[g]\bigl(t,X_g(t,\bar z),V_g(t,\bar z)\bigr)
	\Bigr|
	\,\di \pi_0(z,\bar z) \\
& \hspace{1.5cm} \leq 2 \kappa \|\phi \|_{W^{1,\infty}} \Big[ (1 + \sqrt{M}) \Delta +   R\Delta +   \frac{4M}{\alpha_1} \sqrt{ 1 + \frac{2T^2}{\alpha^2}} e^{-\frac{\alpha - \alpha_1}{2} R} \Big ].
\end{aligned}
\end{align*}
\end{proposition}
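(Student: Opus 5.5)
The plan is to combine the representation \eqref{C-5-1}, the splitting \eqref{C-5-2}, and Lemma \ref{L4.1}. First I take absolute values in \eqref{C-5-1} and integrate against $\di\pi_0(z,\bar z)$. Since $(z_*,\bar z_*)$ is an independent copy with law $\pi_0$, the inner integral over $\di\pi_0(z_*,\bar z_*)$ is just the pushforward representation $f(t)=Z_f(t)_\#f_0$, $g(t)=Z_g(t)_\#g_0$. This uses the fact that the marginals of $\pi_0$ are $f_0\,\di z$ and $g_0\,\di\bar z$. Moving the modulus inside the $\pi_0(z_*,\bar z_*)$-integral then gives
\[
\iint_{\bbr^{4d}}\bigl|L[f](X_f,V_f)-L[g](X_g,V_g)\bigr|\,\di\pi_0(z,\bar z)
\le \kappa\iiiint_{\bbr^{8d}}\bigl(|\mathcal I_{11}|+|\mathcal I_{12}|\bigr)\,\di\pi_0(z_*,\bar z_*)\,\di\pi_0(z,\bar z).
\]

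Next I insert Lemma \ref{L4.1}(i) and (ii), which requires $R\ge1$. The sum is bounded by
\[
2\|\phi\|_{L^\infty}\Delta + 2\|\phi'\|_{L^\infty}\Big(\sqrt M\Delta+R\Delta+\tfrac{4M}{\alpha_1}\sqrt{1+\tfrac{2T^2}{\alpha^2}}\,e^{-\frac{\alpha-\alpha_1}{2}R}\Big).
\]
Bounding both $\|\phi\|_{L^\infty}$ and $\|\phi'\|_{L^\infty}$ by $\|\phi\|_{W^{1,\infty}}$ and collecting terms yields exactly the stated right-hand side, with the coefficient $(1+\sqrt M)$ in front of $\Delta$.

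The one point that needs care is the claim ``for any $R>0$'' when Lemma \ref{L4.1}(ii) is stated only for $R\ge1$. For $R\in(0,1)$ I would check whether the truncation argument in the appendix actually uses $R\ge1$. If it does, I would apply the lemma with $R=1$. The resulting bound involves $\Delta$ and $e^{-\frac{\alpha-\alpha_1}{2}}$, and for small $R$ the exponential factor $e^{-\frac{\alpha-\alpha_1}{2}R}$ is at most $1$ but comparable to it. The extra $\Delta$ coming from $R=1$ would be absorbed by enlarging the constant in front of $\Delta$. Alternatively, and more simply, I would restrict attention to $R\ge1$, since only large $R$ is used in Step B.

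I would also justify measurability and the application of Fubini to the quadruple integral. This follows from the moment bounds of Lemmas \ref{L2.3}–\ref{L2.4}, which ensure the integrands are integrable. Beyond these points the proof is bookkeeping; the real work sits in Lemma \ref{L4.1}(ii).
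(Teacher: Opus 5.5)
Your proposal is correct and follows the paper's proof essentially verbatim: take absolute values in \eqref{C-5-1}, split the integrand via \eqref{C-5-2}, insert Lemma \ref{L4.1}(i)--(ii), and bound $\|\phi\|_{L^\infty}$ and $\|\phi'\|_{L^\infty}$ by $\|\phi\|_{W^{1,\infty}}$. On your concern about $R\in(0,1)$: the appendix never actually uses $R\ge1$, because the bounds $e^{(\alpha-\alpha_1)|V_g|}\ge e^{(\alpha-\alpha_1)R}$ on $\{|V_g|>R\}$ and $|V_g|^2\le \frac{2}{\alpha_1^2}e^{\alpha_1|V_g|}$ hold for every $R>0$, so Lemma \ref{L4.1} applies directly and your $R=1$ fallback (which would not reproduce the stated constant anyway) is unnecessary.
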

\begin{proof} We use  \eqref{C-5-1} and Lemma \ref{L4.1} to see
\begin{align*}
\begin{aligned} 
&  \iint_{\bbr^{4d}} \Big| L[f](X_f,V_f)-L[g](X_g,V_g) \Big| \,\di \pi_0(z,\bar z)  \\
&\hspace{0.5cm} \leq
	\kappa
	\iiiint_{\bbr^{8d}}
	\Big |
	\phi(|X_f-X_{f,*}|)(V_{f,*}-V_f)
	-
	\phi(|X_g-X_{g,*}|)(V_{g,*}-V_g)
	\Big |
	\,\di \pi_0(z_*,\bar z_*) \,\di \pi_0(z,\bar z)  \\
& \hspace{0.5cm} \leq 2 \kappa \|\phi\|_{L^\infty}  \Delta +  2 \kappa \|\phi'\|_{L^\infty} \Big( \sqrt{M} \Delta +  R\Delta +  \frac{4M}{\alpha_1} \sqrt{ 1 + \frac{2T^2}{\alpha^2}}e^{-\frac{\alpha - \alpha_1}{2} R} \Big) \\
& \hspace{0.5cm} \leq 2 \kappa \|\phi \|_{W^{1,\infty}} \Big[ (1 + \sqrt{M}) \Delta +   R\Delta +  \frac{4M}{\alpha_1} \sqrt{ 1 + \frac{2T^2}{\alpha^2}} e^{-\frac{\alpha - \alpha_1}{2} R} \Big ].
\end{aligned}
\end{align*}	
\end{proof}	
\subsubsection{Derivation of differential inequality for $\Delta$} \label{sec:4.1.2}
In this part, we derive the differential inequality for $\Delta = \Delta(t)$. Since the map
\(r\mapsto |r|\) is not differentiable at $r = 0$, the following computation
can be justified by replacing \(|\xi|\) with \((|\xi|^2+\varepsilon^2)^{1/2}\),
deriving estimates uniformly in \(\varepsilon>0\), and then letting
\(\varepsilon\downarrow0\). 
\begin{proposition}  \label{NP4.1}
For every \(R > 0\) and $T \in (0, \infty)$, the functional $\Delta$ satisfies 
\begin{equation}\label{C-9}
	\frac{\di  \Delta(t)}{\di t}
	\le
	C_{T,M} \Big( (1+R)\Delta(t)
	+ e^{-\lambda R} \Big),
	\qquad
	\text{for a.e. }t\in[0,T].
\end{equation}
Here, $C_{T, M}$ and $\lambda$ are positive constants defined as follows:
\begin{equation} \label{C-9-0}
C_{T, M} :=  \max \Bigg \{  1 +  2 \kappa \|\phi \|_{W^{1,\infty}}  (1 + \sqrt{M}),~\frac{8 \kappa \|\phi \|_{W^{1,\infty}} M}{\alpha_1} \sqrt{ 1 + \frac{2T^2}{\alpha^2}} \Bigg \}, \quad \lambda :=  \frac{\alpha - \alpha_1}{2} > 0.   
\end{equation}
\end{proposition}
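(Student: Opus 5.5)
The plan is to differentiate $\Delta$ in time along the coupled characteristics \eqref{C-2} and bound the resulting terms. Here $\Delta$ is defined in \eqref{C-4} and integrated against the fixed initial coupling $\pi_0$. To avoid the nondifferentiability of $|\cdot|$ at the origin, I will work with the regularized functional
\[
\Delta_\varepsilon(t):=\iint_{\mathbb R^{4d}}\Bigl(\sqrt{|X_f-X_g|^2+\varepsilon^2}+\sqrt{|V_f-V_g|^2+\varepsilon^2}\Bigr)\,\di\pi_0(z,\bar z),
\]
which is $\mathcal C^1$ in $t$. Each integrand has time derivative bounded in absolute value by $|\dot X_f-\dot X_g|$, respectively $|\dot V_f-\dot V_g|$, since the gradient of $\xi\mapsto(|\xi|^2+\varepsilon^2)^{1/2}$ has norm at most $1$. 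To differentiate under the integral sign, or equivalently to write $\Delta_\varepsilon(t)-\Delta_\varepsilon(s)$ as a time integral of the integrand derivative, I need an integrable dominating function. This comes from the sublinear growth of $L[f]$ in $V$ together with the propagated moments in Lemma \ref{L2.3} and Lemma \ref{L2.4}, which are available because of \eqref{C-1}.

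For the position part, $\dot X_f-\dot X_g=V_f-V_g$. After integrating against $\pi_0$, this contributes at most $\iint|V_f-V_g|\,\di\pi_0\le\Delta(t)$. For the velocity part, $\dot V_f-\dot V_g=L[f](t,X_f,V_f)-L[g](t,X_g,V_g)$. Its $\pi_0$-integral is exactly the quantity controlled in Proposition \ref{P4.1}, so for every $R\ge1$ it is bounded by
\[
2\kappa\|\phi\|_{W^{1,\infty}}\Bigl[(1+\sqrt M)\Delta+R\Delta+\tfrac{4M}{\alpha_1}\sqrt{1+\tfrac{2T^2}{\alpha^2}}\,e^{-\lambda R}\Bigr],\qquad \lambda=\tfrac{\alpha-\alpha_1}{2}.
\]
Adding the two contributions, in integrated form, gives
\[
\Delta_\varepsilon(t)\le\Delta_\varepsilon(0)+\int_0^t\Bigl[\bigl(1+2\kappa\|\phi\|_{W^{1,\infty}}(1+\sqrt M)\bigr)\Delta+2\kappa\|\phi\|_{W^{1,\infty}}R\,\Delta+\tfrac{8\kappa\|\phi\|_{W^{1,\infty}}M}{\alpha_1}\sqrt{1+\tfrac{2T^2}{\alpha^2}}\,e^{-\lambda R}\Bigr]\di s .
\]
I then let $\varepsilon\downarrow0$ using $|\xi|\le(|\xi|^2+\varepsilon^2)^{1/2}\le|\xi|+\varepsilon$ and dominated convergence. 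This shows that $\Delta$ is absolutely continuous and satisfies the differential inequality almost everywhere.

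The last step is to absorb the constants into $C_{T,M}$ as defined in \eqref{C-9-0}. The coefficient of $\Delta$ is $1+2\kappa\|\phi\|_{W^{1,\infty}}(1+\sqrt M)\le C_{T,M}$. The coefficient of $R\Delta$ is $2\kappa\|\phi\|_{W^{1,\infty}}$, which is also at most $C_{T,M}$. The exponential term's constant is exactly the second entry of the maximum. This yields \eqref{C-9}. The statement covers all $R>0$, whereas Lemma \ref{L4.1} was stated for $R\ge1$. For $R\in(0,1)$, I plan to check whether the appendix argument actually uses $R\ge1$. If it does, I will fall back on the $R=1$ bound, which is dominated by $C_{T,M}(2\Delta+e^{-\lambda})$, and adjust constants accordingly.

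The main obstacle is the justification of differentiation under the integral, that is, the absolute continuity of $\Delta$. The velocity field is not globally Lipschitz, so I have to verify that $|L[f](t,X_f,V_f)|\le\kappa\|\phi\|_{L^\infty}(|V_f|+\int|v_*|f\,\di z_*)$ is $\pi_0$-integrable uniformly in $t\in[0,T]$. This follows from Lemma \ref{L2.2}(iii) and the bounded exponential moment. The genuinely hard estimate, the $\mathcal I_{12}$ term with the $|v|+|v_*|$ factor, is already handled in Lemma \ref{L4.1}(ii), which I take as given.
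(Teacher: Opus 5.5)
Your proposal is correct and is essentially the paper's own argument. You bound $\frac{\di}{\di t}\Delta$ by the $\pi_0$-integral of $|V_f-V_g|$ (at most $\Delta$) plus the force-difference integral controlled by Proposition~\ref{P4.1}, absorb the coefficients into the maximum defining $C_{T,M}$, and use the $\varepsilon$-regularization that the paper only mentions just before the statement; your dominating-function and integrated-form justification of absolute continuity is more careful than the paper's.

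Your worry about $R\in(0,1)$ is unnecessary: Proposition~\ref{P4.1} is already stated for all $R>0$, and the appendix never actually uses $R\ge 1$, since the tail bound $e^{(\alpha-\alpha_1)|V_g|}\ge e^{(\alpha-\alpha_1)R}$ on $\{|V_g|>R\}$ and the pointwise bound $r^2\le \frac{2}{\alpha_1^2}e^{\alpha_1 r}$ hold for every $R>0$. This matters, because the $R=1$ fallback would not reproduce the stated constant $C_{T,M}$.
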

\begin{proof}
\noindent For a.e. \(t\in[0,T]\), we have
\begin{align} 
\begin{aligned} \label{C-9-1}
 \frac{\di}{\di t}\Delta(t)
	&\le
	\iint_{\bbr^{4d}} \Big |V_f(t,z)-V_g(t,\bar z) \Big|\,\di \pi_0(z,\bar z)
	\\
	&+
	\iint_{\bbr^{4d}}
	\Bigl |
	L[f]\bigl(t,X_f(t,z),V_f(t,z)\bigr)
	-
	L[g]\bigl(t,X_g(t,\bar z),V_g(t,\bar z)\bigr)
	\Bigr|
	\,\di \pi_0(z,\bar z)
	\\
	&=: {\mathcal I}_{21}(t)+ {\mathcal I}_{22}(t).
\end{aligned}
\end{align}
Then, we use the definition of $\Delta$ and Proposition \ref{P4.1} to see
\begin{align}
\begin{aligned} \label{C-9-2}
& {\mathcal I}_{21}(t) \leq \Delta(t), \\
&  {\mathcal I}_{22}(t) \leq 2 \kappa \|\phi \|_{W^{1,\infty}} \Bigg[ (1 + \sqrt{M}) \Delta(t) +   R\Delta(t) +  \frac{4M}{\alpha_1} \sqrt{ 1 + \frac{2T^2}{\alpha^2}} e^{-\frac{\alpha - \alpha_1}{2} R} \Bigg].
\end{aligned}
\end{align}
Finally, we use \eqref{C-9-1} and \eqref{C-9-2} to get the desired estimate:
\begin{align*}
\begin{aligned}
 \frac{\di}{\di t}\Delta(t) &\leq \Delta(t) + 2 \kappa \|\phi \|_{W^{1,\infty}} \Bigg[ (1 + \sqrt{M}) \Delta(t) +   R\Delta(t) +  \frac{4M}{\alpha_1} \sqrt{ 1 + \frac{2T^2}{\alpha^2}} e^{-\frac{\alpha - \alpha_1}{2} R} \Bigg ] \\
 &= \Big( 1 +  2 \kappa \|\phi \|_{W^{1,\infty}}  (1 + \sqrt{M})            \Big) \Delta(t) +  2 \kappa \|\phi \|_{W^{1,\infty}} R \Delta(t)  \\
 &+  \frac{8 \kappa \|\phi \|_{W^{1,\infty}} M}{\alpha_1} \sqrt{ 1 + \frac{2T^2}{\alpha^2}} e^{-\frac{\alpha - \alpha_1}{2} R} \\
 & \leq  \Big( 1 +  2 \kappa \|\phi \|_{W^{1,\infty}}  (1 + \sqrt{M})            \Big) (1 + R) \Delta(t) +  \frac{8 \kappa \|\phi \|_{W^{1,\infty}} M}{\alpha_1} \sqrt{ 1 + \frac{2T^2}{\alpha^2}}  e^{-\frac{\alpha - \alpha_1}{2} R} \\
 & \leq \underbrace{\max \Big \{  \Big( 1 +  2 \kappa \|\phi \|_{W^{1,\infty}}  (1 + \sqrt{M}) \Big),~  \frac{8 \kappa \|\phi \|_{W^{1,\infty}} M}{\alpha_1} \sqrt{ 1 + \frac{2T^2}{\alpha^2}}        \Big \}}_{=: C_{T, M}} \\
 & \times \Big(  (1 + R) \Delta(t) +   e^{-\frac{\alpha - \alpha_1}{2} R} \Big).
\end{aligned}
\end{align*}
\end{proof}

\subsection{Proof of Theorem \ref{T3.1}} \label{sec:4.2}
In this subsection, we finish the proof of Theorem \ref{T3.1} for the last two steps. \newline

\noindent $\bullet$ \textbf{Step C (Conversion of \eqref{C-9} into an Osgood differential inequality)}:~We convert \eqref{C-9} into an Osgood type differential inequality. Since $R > 0$ is arbitrary constant, we  first replace $\lambda R$ by $R$ and change the constant $C_{T, M}$ accordingly to rewrite \eqref{C-9} as
\begin{equation}\label{C-9-3}
	\frac{\di}{\di t}\Delta(t)
	\le
	C_{T, M} \Big( (1+R)\Delta(t)
	+ e^{-R} \Big).
\end{equation}
In what follows, we denote the constant multiples of $C_{T, M}$ by the same constant $C_{T, M}$ as long as there is no confusion. Note that the differential inequality \eqref{C-9-3} holds for $R = 1$:
\begin{equation}\label{C-9-4}
	\frac{\di}{\di t}\Delta(t)
	\le
	 C_{T, M} \Big( \Delta(t)
	+ e^{-1} \Big).
\end{equation}
Then, we apply Gr\"onwall's lemma to \eqref{C-9-4} to get a bound for $\Delta$ in the time-interval $[0, T)$:
\begin{equation}\label{C-9-5}
	\sup_{0\le t\le T}\Delta(t)
	\le
	e^{C_{T,M} T} \Big( \Delta(0) + \frac{1}{e} \Big) - \frac{1}{e} \le e^{C_{T,M} T} \Big( \Delta(0) + \frac{1}{e} \Big).
\end{equation}
Now, we claim that 
\begin{equation} \label{C-9-5-11}
\sup_{0\le t\le T}\Delta(t) \leq 
\begin{cases}
C_{T,M}\Delta(0)^{e^{-C_{T,M}T}}, \quad & \Delta(0) \leq 1, \vspace{4pt}\\
 2 e^{C_{T,M} T} \Delta(0), \quad & \Delta(0)  > 1.
 \end{cases}
\end{equation}
{\it Proof of  \eqref{C-9-5-11}}: We consider the following two cases:
\[ \mbox{Either}~~ \Delta(0) \leq 1 \quad \mbox{or} \quad  \Delta(0) > 1. \]
\noindent $\diamond$~{\bf Case 1}: Suppose that 
\[ \Delta(0) \leq 1. \]
Next, we use a rough estimate \eqref{C-9-5} to take a sufficiently large positive constant \(K_{T,M}>0\) so that 
\begin{equation} \label{C-9-6}
a(t):=\frac{\Delta(t)}{K_{T,M}}\le e^{-1}, \quad \forall~t \in [0, T).
\end{equation}
We divide \eqref{C-9-3} by \(K_{T,M}\) to obtain
\begin{align}
\begin{aligned} \label{C-9-7}
	{\dot a}(t) &= \frac{{\dot \Delta}(t)}{K_{T,M}} \leq \frac{C_{T, M}}{K_{T, M}}  \Big(  (1+R)\Delta(t)
	+ e^{-R}\Big) \\
	&=
	C_{T,M} \Big( (1+R)a(t)
	+ K_{T,M}^{-1}e^{-R} \Big),
\end{aligned}
\end{align}
where we used \eqref{C-9-6}. Since \eqref{C-9-7} holds for every \(R > 0\), we can choose a time-dependent $R = R(t)$:
\begin{equation} \label{C-9-8}
R(t):=-\log a(t)\ge1, \quad \mbox{i.e.,} \quad e^{-R(t)}=a(t).
\end{equation}
\noindent If necessary, by enlarging \(K_{T,M}\), we  use \eqref{C-9-7} and \eqref{C-9-8} to obtain
\[
	a'(t)
	\le
	C_{T,M}a(t)(1-\log a(t)).
\]
Since \(0<a(t)\le e^{-1}\), we have
\[
1-\log a(t)\le -2\log a(t).
\]
Thus, we redefine $C_{T,M}$ as $2C_{T,M}$ as long as there is no confusion to get
\[
	a'(t)\le -C_{T,M}a(t)\log a(t),
\]
or equivalently,
\[
\frac{\di}{\di t}\log\bigl(-\log a(t)\bigr)\ge -C_{T,M}.
\]
We integrate the above relation over \([0,t]\) to get 
\[
-\log a(t)
\ge
e^{-C_{T,M}t}\bigl(-\log a(0)\bigr).
\]
Hence, we have
\begin{equation}\label{C-9-9}
	a(t)\le a(0)^{e^{-C_{T,M}t}},
	\qquad 0\le t\le T.
\end{equation}
It follow from \eqref{C-9-6} and \eqref{C-9-9} that 
\begin{align}
\begin{aligned} \label{C-9-10}
\Delta(t) &= K_{T,M} a(t) \leq  K_{T,M}  a(0)^{e^{-C_{T,M}t}}  =  K_{T,M}  \Big( \frac{\Delta(0)}{K_{T,M}} \Big)^{e^{-C_{T,M}t}} \\
	&=
	K_{T,M}^{1-e^{-C_{T,M}t}}
	\Delta(0)^{e^{-C_{T,M}t}}
	\le
	C_{T,M}\Delta(0)^{e^{-C_{T,M}T}},
	\qquad 0\le t\le T.
\end{aligned}
\end{align}
Note that the right-hand side of \eqref{C-9-10} cannot be controlled by a linear bound $\Delta(0)$, since 
\[
\Delta(0) \le \Delta(0)^{e^{-C_{T,M}T}}.
\]

\vspace{0.2cm}

\noindent $\diamond$~{\bf Case 2}: Suppose that 
\begin{equation} \label{C-29-1}
 \Delta(0) > 1.
\end{equation}
By \eqref{C-9-5} and \eqref{C-29-1}, we have
\begin{align}
\begin{aligned} \label{C-29-2}
	\sup_{0\le t\le T}\Delta(t) &\le e^{C_{T,M} T} \Big( \Delta(0) + \frac{1}{e} \Big) \leq 2 e^{C_{T,M} T} \Delta(0).
\end{aligned}
\end{align}

\vspace{0.2cm}

\noindent $\bullet$ \textbf{Step D (Derivation of weak stability)}:~Note that 
\[ \Delta(0) = \iint_{\bbr^{4d}} |z-\bar z|\,\di \pi_0(z,\bar z), \quad  \Lambda_{T,M}:=e^{-C_{T,M}T} \in (0, 1).\]
Then, the relation \eqref{C-9-5-11} can be rewritten as 
\begin{align}
\begin{aligned} \label{C-9-5-1}
& \sup_{0\le t\le T} \Delta_{\pi}[f,g](t)  \\
& \hspace{1cm} \leq  \max \Big\{ C_{T,M},~ 2 e^{C_{T,M} T} \Big \}\times
\begin{cases}
\displaystyle  \Big( \iint_{\bbr^{2d}} |z-\bar z|\,\di \pi_0(z,\bar z) \Big)^{\Lambda_{T, M}}, \quad & \Delta(0) \leq 1, \vspace{6pt}\\
\displaystyle  \iint_{\bbr^{2d}} |z-\bar z|\,\di \pi_0(z,\bar z), \quad & \Delta(0)  > 1.
 \end{cases}
\end{aligned}
\end{align}
Now, we combine \eqref{C-5} and \eqref{C-9-5-1} to see that for $t \in [0, T]$, 

\begin{align}
\begin{aligned} \label{C-10}
W_1(\mu_t, \nu_t ) &\le \Delta_{\pi}[f,g](t) \\
&\leq \max \Big\{ C_{T,M},~ 2 e^{C_{T,M} T} \Big \}\times
\begin{cases}
\displaystyle  \Big( \iint_{\bbr^{2d}} |z-\bar z|\,\di \pi_0(z,\bar z) \Big)^{\Lambda_{T, M}}, \quad & \Delta(0) \leq 1, \vspace{6pt}\\
\displaystyle   \iint_{\bbr^{2d}} |z-\bar z|\,\di \pi_0(z,\bar z), \quad & \Delta(0)  > 1.
 \end{cases}
\end{aligned}
\end{align}
Finally, we take the infimum over all \(\pi_0 \in\Pi(f_0,g_0)\) in the right-hand side of \eqref{C-10} to find 
\begin{align}
\begin{aligned} \label{C-11}
W_1(\mu_t, \nu_t ) &\le \Delta_{\pi}[f,g](t) \\
&\leq \max \Big\{ C_{T,M},~ 2 e^{C_{T,M} T} \Big \}\times
\begin{cases}
\displaystyle W_1(\mu_0, \nu_0)^{\Lambda_{T,M}}, \quad & W_1(\mu_0, \nu_0) \leq 1, \vspace{6pt}\\
\displaystyle W_1(\mu_0, \nu_0), \quad & W_1(\mu_0, \nu_0)  > 1.
 \end{cases}
\end{aligned}
\end{align}
Now, we define a continuous function $G_T$: for some $\Lambda_{T, M} \in (0, 1)$, 
\begin{equation} \label{C-11-1}
G_T(\zeta) :=  \max \Big\{ C_{T,M},~ 2 e^{C_{T,M} T} \Big \}\times \begin{cases}
\zeta^{\Lambda_{T,M}} \quad &\zeta \leq 1, \vspace{6pt}\\
\zeta, \quad & \zeta > 1.
\end{cases}
\end{equation}
Then, the estimate \eqref{C-11} yields a weak-stability in $W_1$-metric:
\begin{equation} \label{C-12}
W_1(\mu_t, \nu_t ) \leq G_T \Big (W_1(\mu_0, \nu_0) \Big ).
\end{equation}
This completes the proof of Theorem \ref{T3.1}. \newline
\begin{remark}\label{R4.1}
Below, we briefly wrap up several crucial ingredients employed in the proof.  The main difficulty in the above stability argument comes from the absence of
	compact velocity support. In the compactly supported case, all velocities remain
	uniformly bounded on finite time intervals, and the alignment force is effectively
	Lipschitz along the characteristics. One can then close a standard Dobrushin-type
	estimate. However, in the current phase-spatially extended setting, this argument breaks down
	because the force difference produces the term
	\begin{equation} \label{C-13}
	|V_g|\,|X_f-X_g|,
	\end{equation}
	where the factor \(|V_g|\) cannot be bounded uniformly. The key idea of the proof is to replace the missing uniform velocity bound by a
	velocity-truncation argument (see Case B.2 of Lemma \ref{L4.1} in Appendix \ref{App-A}). On the region \(\{|V_g|\le R\}\), the term \eqref{C-13} is
	controlled by \(R\Delta(t)\), while on the tail domain \(\{|V_g|>R\}\), the
	propagated exponential velocity moment gives an exponentially small error. This
	leads to the differential inequality with a control parameter $R > 0$:
	\[
	\frac{\di}{\di t}\Delta(t)
	\le
	C_{T,M} \Big( (1+R)\Delta(t) + e^{-\lambda R} \Big).
	\]
By choosing \(R\sim |\log\Delta(t)|\), one can derive an Osgood-type stability estimate or weak-stability estimate \eqref{C-12}, which is certainly weaker than the Lipschitz stability available under compact
	support, but it is sufficient to obtain finite-time continuous dependence in
	\(W_1\). This estimate is the key ingredient for the finite-time mean-field limit. Indeed,
	if the initial empirical measures \(\mu^N_{0} \) converge to \(f_0 \di x \di v\) in \(W_1\) and
	satisfy the same noncompact moment bound, then the empirical measures generated
	by the particle system satisfy
	\[
	\sup_{0\le t\le T}W_1(\mu^N_t, \mu_t)
	\le G_T \Big (W_1(\mu^N_0, \mu_0) \Big),
	\]
	where $\mu_t = f(t) \di z$ and $\mu_0 = f_0 \di z$.  Consequently, the convergence of the initial empirical measures implies the convergence of the empirical measure to the kinetic solution in any finite-time window. This also provides a new approach to the existence of solution to KCS model \eqref{A-2} in a phase-spatially extended setting.
\end{remark}

%
%
%
\section{Random mean-field limit}\label{sec:5}
\setcounter{equation}{0}
In this section, we study the rigorous derivation of `the finite-time random mean-field limit as the application of the weak stability in Theorem \ref{T3.1}.  \newline

For $T \in (0, \infty)$, let $\{ (x_i(t),v_i(t)) \}_{i=1}^N$ and $f = f(t, x,v)$ be global random processes to \eqref{A-1} with random initial data  $\{ (x_{i,0},v_{i,0}) \}_{i=1}^N$ and a global weak solution to \eqref{A-2} with deterministic datum $f_0$, respectively. We assume that the law of random process $z_{i,0} = (x_{i,0}, v_{i,0})$ is given by the same law $f_0$ for each $i \in [N]$.  Then, for such random process $\{ (x_i(t),v_i(t)) \}_{i=1}^N$,  we introduce the random empirical measure generated  from the random process $z_{i}(t) = (x_i(t), v_i(t))$:
\[
\mu^N_t =\frac1N\sum_{i=1}^N\delta_{z_i(t)}, \quad t \geq 0.
\]
Note that it is a path-wise measure-valued solution to \eqref{A-2} with random initial datum $\mu^N_{0}=\frac1N\sum_{i=1}^N\delta_{z_{i,0}}$.  In order to apply Theorem \ref{T3.1}, we need to consider a ``good event" which the assumption \eqref{B-5-1} holds. More precisely, for a fixed \(T>0\), we choose
\[
0<\alpha_0<\alpha, \quad \mbox{say} \quad \alpha_0= \frac{\alpha}{2},
\]
and we set 
\[
Y(z):=|x|^2+e^{\alpha_0|v|}, \quad 
M_0:=\int_{\mathbb R^{2d}}Y(z)  f_0(z) \di z <\infty, \quad M := 2M_0. 
\]
Note that 
\begin{equation} \label{D-0}
 \int_{\bbr^{2d}} Y(z)  \di \mu^N_0 = \frac{1}{N} \sum_{i=1}^{N} Y(z_{i,0}). 
 \end{equation}
For each \(N\), we define a good event consisting of sample points satisfying the condition in Theorem \ref{T3.1}:
\begin{equation} \label{D-1}
\mathcal G_N
:=
\left\{
\frac1N\sum_{i=1}^NY(z_{i,0}) \le M
\right\}.
\end{equation}
Therefore, on the good event ${\mathcal G}_N$, Theorem~\ref{T3.1} can be applied directly to compare the empirical measure \(\mu^N_t\) with the kinetic solution issued from a limiting initial law $f_0$ along the sample path. 
\subsection{Preparatory estimates} \label{sec:5.1}
In this subsection, we present probabilistic estimates for the ``good event ${\mathcal G}_N$" and its complement (bad event) ${\mathcal B}_N := {\mathcal G}_N^c$. These estimates will be crucially used in the proof of Theorem \ref{T3.2}. We set 
\[ \di \mu_t(x,v) := f(t,x,v) \di x \di v, \quad t > 0, \quad  \di \mu_0(x,v) := f_0(x,v) \di x \di v.\]
\begin{lemma} \label{L5.1}
Suppose that initial data $f_0\in (L^1 \cap L^{\infty}_+)(\mathbb{R}^{2d})$ and $Z_{0}= \{ z_{i,0} := (x_{i,0},v_{i,0}) \}$ satisfy the conditions \eqref{B-6}, and let  $f$ and \(\mu^N_t\)  be the weak solution to \eqref{A-2} and the empirical measure generated by  \eqref{A-1} with initial configuration $Z_0$, respectively. Then the good and bad events satisfy the following estimates:
\[  \mathbb E\left[
	\mathbf 1_{\mathcal G_N}
	\sup_{0\le t\le T}W_1(\mu^N_t, \mu_t)
	\right]
	\le C_{T,M }N^{-\eta \Lambda_{T,M}}, \quad \mathbb P(\mathcal B_N) \le C N^{-(p_0-1)}.
\]
Here, $p_0:=\min\left\{\frac q2,2\right\}.$
\end{lemma}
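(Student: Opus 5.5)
Both estimates have to be proved: the good-event bound and the bad-event bound.

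For the good-event bound, fix a sample point in $\mathcal G_N$. There the empirical initial datum $\mu^N_0$ satisfies $\int (|x|^2+e^{\alpha_0|v|})\di\mu^N_0\le M$ by \eqref{D-0}. The datum $f_0$ satisfies the same bound with $M_0\le M$, since $e^{\alpha_0|v|}\le e^{\alpha|v|}$ and $f_0$ has the stated moments. So the hypothesis \eqref{B-5-1} holds with $\alpha_0=\alpha/2$ in place of $\alpha$. Theorem \ref{T3.1} needs densities, but by Remark \ref{R3.1}(3) the characteristic-flow argument applies verbatim to the atomic measure $\mu^N_t$, which is a push-forward of $\mu^N_0$ along the CS flow, by Remark \ref{R2.1}(2). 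We therefore get, pathwise on $\mathcal G_N$,
\[
\sup_{0\le t\le T}W_1(\mu^N_t,\mu_t)\le G_T\bigl(W_1(\mu^N_0,\mu_0)\bigr).
\]
From the explicit form \eqref{C-11-1} we have $G_T(\zeta)\le C_{T,M}\max\{\zeta^{\Lambda_{T,M}},\zeta\}\le C_{T,M}(\zeta^{\Lambda_{T,M}}+\zeta)$. Taking expectations and using Jensen's inequality for the concave map $\zeta\mapsto\zeta^{\Lambda_{T,M}}$ gives
\[
\mathbb E\bigl[\mathbf 1_{\mathcal G_N}\textstyle\sup_t W_1\bigr]\le C_{T,M}\Bigl(\mathbb E[W_1(\mu^N_0,\mu_0)]^{\Lambda_{T,M}}+\mathbb E[W_1(\mu^N_0,\mu_0)]\Bigr).
\]
By \eqref{B-6}(iii) this is at most $C_{T,M}(N^{-\eta\Lambda_{T,M}}+N^{-\eta})\le C_{T,M}N^{-\eta\Lambda_{T,M}}$, because $\Lambda_{T,M}<1$.

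For the bad-event bound, note that $\mathcal B_N=\{\frac1N\sum_i(Y(z_{i,0})-M_0)>M_0\}$ is a large-deviation event for an i.i.d. average with mean $M_0$. Set $\xi_i:=Y(z_{i,0})-M_0$. These are i.i.d. and centered. We need $\mathbb E|\xi_i|^{p_0}<\infty$ with $p_0=\min\{q/2,2\}$:
\begin{itemize}
\item $|x|^{2p_0}\le 1+|x|^q$ is integrable against $f_0$ because $2p_0\le q$;
\item $e^{\alpha_0 p_0|v|}\le e^{\alpha|v|}$ because $\alpha_0p_0\le\alpha$ when $p_0\le2$.
\end{itemize}
Markov's inequality then gives $\mathbb P(\mathcal B_N)\le M_0^{-p_0}\mathbb E|\frac1N\sum\xi_i|^{p_0}$. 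Since $p_0\in(1,2]$, the von Bahr--Esseen inequality (or Rosenthal/Marcinkiewicz--Zygmund followed by subadditivity of $s\mapsto s^{p_0/2}$) gives $\mathbb E|\sum_{i=1}^N\xi_i|^{p_0}\le 2N\,\mathbb E|\xi_1|^{p_0}$. Hence $\mathbb P(\mathcal B_N)\le 2M_0^{-p_0}\mathbb E|\xi_1|^{p_0}\,N^{1-p_0}$, which is the stated bound.

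The step I expect to be the main obstacle is the first one, namely justifying Theorem \ref{T3.1} for the atomic measure $\mu^N$ and making sure the constants are uniform on $\mathcal G_N$. The constants $C_{T,M}$ and $\Lambda_{T,M}$ must depend only on $M,T,\alpha_0$ and not on the sample. This needs the propagated exponential velocity moment (Lemma \ref{L2.3}) for the particle system. I would obtain it directly from the $\mu^N$ flow, where $\frac{\di}{\di t}|v_i|\le\kappa\|\phi\|_{L^\infty}\frac1N\sum_j(|v_j|-|v_i|)$-type bounds show that $\frac1N\sum_ie^{\alpha_0|v_i|}$ is nonincreasing by convexity. The second moment in $x$ is propagated by Lemma \ref{L2.1}. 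Together these give the moment control the proof of Lemma \ref{L4.1}(ii) needs.
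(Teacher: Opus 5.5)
Your proposal is correct and follows the same plan as the paper's proof. For $\mathcal G_N$, you apply Theorem~\ref{T3.1} pathwise with $\alpha_0=\alpha/2$ and $M=2M_0$, take expectations, and use Jensen together with \eqref{B-6}(iii). For $\mathcal B_N$, you centre $Y(z_{i,0})$, check its $p_0$-th moment via $2p_0\le q$ and $p_0\alpha_0\le\alpha$, and combine von Bahr--Esseen with Markov.

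The one place you diverge is the Jensen step, and your version is the better one. The paper writes $\mathbb E[G_T(W_1(\mu^N_0,\mu_0))]\le G_T(\mathbb E[W_1(\mu^N_0,\mu_0)])$, relying on the asserted concavity of $G_T$. But $G_T$ in \eqref{C-11-1} is not concave. With $K=\max\{C_{T,M},2e^{C_{T,M}T}\}$, its slope jumps up at $\zeta=1$ from $K\Lambda_{T,M}$ to $K$. Concretely, $G_T(1)=K<K\bigl(2^{-\Lambda_{T,M}-1}+\frac34\bigr)=\frac12\bigl(G_T(\frac12)+G_T(\frac32)\bigr)$ for every $\Lambda_{T,M}\in(0,1)$. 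Your majorant $G_T(\zeta)\le K(\zeta^{\Lambda_{T,M}}+\zeta)$ is itself concave, and applying Jensen only to the power repairs the step at no cost to the rate $N^{-\eta\Lambda_{T,M}}$.

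You also make explicit something the paper takes for granted: that the proof of Theorem~\ref{T3.1} runs for the atomic solution $\mu^N_t$ with sample-independent constants. Two small corrections are needed there.
\begin{enumerate}
\item The bound on $\frac{\di}{\di t}|v_i|$ must keep the weights $\phi(|x_i-x_j|)$ rather than $\|\phi\|_{L^\infty}$, since the differences $|v_j|-|v_i|$ have no sign. With the weights kept, symmetrizing in $(i,j)$ and using the monotonicity of the subgradient of the convex map $v\mapsto e^{\alpha_0|v|}$ shows that $\frac1N\sum_i e^{\alpha_0|v_i|}$ is nonincreasing.
\item Lemma~\ref{L2.1}(ii) only controls first moments. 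The spatial second-moment bound needed in \eqref{C-8} should instead come from repeating \eqref{C-8-1}--\eqref{C-8-7} with the energy dissipation of Lemma~\ref{L2.1}(i).
\end{enumerate}
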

\begin{proof}
\noindent (i)~ Recall that on the good event \(\mathcal G_N\), we have
\begin{align*}
\begin{aligned}
& \int_{\mathbb R^{2d}}\bigl(|x|^2+e^{\alpha_0|v|}\bigr)\,\di \mu^{N}_0 (x,v)
=
\frac1N\sum_{i=1}^NY(Z_i^0)
\le 2M_0, \\
& \int_{\mathbb R^{2d}}\bigl(|x|^2+e^{\alpha_0|v|}\bigr) d\mu_0(x,v)
=
M_0
\le 2M_0.
\end{aligned}
\end{align*}
Hence, on \(\mathcal G_N\), both \(\mu^N_{0} \) and \(f_0\) satisfy the fully
noncompact moment assumption \eqref{B-5-1} in Theorem~\ref{T3.1}, with \(\alpha_0\) in place of
\(\alpha\), and with common moment bound $M:=2M_0.$ Then, it follows from Theorem \ref{T3.1} that 
\begin{equation}\label{D-3}
	\sup_{0\le t\le T}W_1(\mu^N_t, \mu_t)
	\le G_{T}\Big(W_1(\mu_{0}^N,~\mu_0)\Big ) \quad \mbox{on}\quad{\mathcal G}_N.
\end{equation}
Now, we multiply by \(\mathbf 1_{\mathcal G_N}\) to \eqref{D-3}, take an expectation and use $\mathbf 1_{\mathcal G_N}\le1$ to get
\[
\mathbb E\left[
\mathbf 1_{\mathcal G_N}
\sup_{0\le t\le T}W_1(\mu^N_t, \mu_t)
\right]
\le
\mathbb E\Big[
\mathbf 1_{\mathcal G_N} G_T \Big(W_1(\mu_{0}^{N}, \mu_0) \Big ) \Big] \leq \mathbb E \Big[ G_T \Big (W_1(\mu_{0}^{N}, \mu_0) \Big) \Big].
\]
Since the map $\zeta \mapsto G_T(\zeta)$ is concave, we use Jensen's inequality to get 
\begin{equation} \label{D-3-1}
\mathbb E\left[
\mathbf 1_{\mathcal G_N}
\sup_{0\le t\le T}W_1(\mu^N_t, \mu_t)
\right]
\le
\mathbb E\Big[ G_T \Big (W_1(\mu_{0}^{N}, \mu_0) \Big)
\Big] \leq G_T \Big( {\mathbb E} \Big [W_1(\mu_{0}^{N}, \mu_0) \Big] \Big ).
\end{equation}
Note that 
\[ N^{-\eta} < 1. \]
For the estimate of the right-hand side of \eqref{D-3-1}, we use  \eqref{C-11-1} and $\eqref{B-6}_3$ with $N^{-\eta} < 1$ to find 
\begin{align}
\begin{aligned} \label{D-4}
G_T \Big(\mathbb E \Big[ 
W_1(\mu^N_0, \mu_0) \Big ] \Big)  \leq  C \max \Big\{ C_{T,M},~ 2 e^{C_{T,M} T} \Big \} 
 N^{-\eta \Lambda_{T,M}}.
 \end{aligned}
\end{align}
Therefore, we combine \eqref{D-3-1} and \eqref{D-4} to obtain the desired estimate:
\[
\mathbb E\left[
\mathbf 1_{\mathcal G_N}
\sup_{0\le t\le T}W_1(\mu^N_t, \mu_t)
\right]  \leq C \max \Big\{ C_{T,M},~ 2 e^{C_{T,M} T} \Big \} 
 N^{-\eta \Lambda_{T,M}},
\]
where the constant \(C\) is independent of \(N\).  \newline

\noindent (ii)~Next, we show that the contribution of the bad event \(\mathcal B_N\) becomes negligible as $N \to \infty$:
\begin{equation} \label{D-4-1}
\mathbb P({\mathcal B}_N) = {\mathbb P} \left ( \left\{
\frac1N\sum_{i=1}^NY(z_{i,0}) > 2M_0
\right\} \right ).
\end{equation}
Since the $z_{i,0}$ are i.i.d, we only focus on $z_{1,0}$ for probabilistic estimates.
Now, we use \(q>2\) and \(\alpha_0<\alpha\) to see that there exists \(p_0\in(1,2]\) such that
\[
\mathbb E\bigl[Y(z_{1,0})^{p_0}\bigr]<\infty.
\]
One may take 
\[
\alpha_0=\alpha/2, \quad p_0:=\min\left\{\frac q2,2\right\}.
\]
Then, we have
\[ p_0> 1, \quad  2p_0\le q  \quad \mbox{and} \quad  p_0\alpha_0\le\alpha. \] 
Indeed, since \(p_0>1\), we use
\[
(a+b)^{p_0}\le 2^{p_0-1}(a^{p_0}+b^{p_0}),\qquad a,b\ge0,
\]
to see
\[
Y(z)^{p_0}
=
\left(|x|^2+e^{\alpha_0|v|}\right)^{p_0}
\le  2^{p_0-1} \left(|x|^{2p_0}+e^{p_0\alpha_0|v|}\right).
\]
 Hence, we have
\[
|x|^{2p_0}\le 1+|x|^q
\quad \mbox{and} \quad 
e^{p_0\alpha_0|v|}\le e^{\alpha|v|}.
\]
Consequently,
\[
Y(z)^{p_0}
\le 2^{p_0-1} \left(1+|x|^q+e^{\alpha|v|}\right).
\]
Therefore, one has 
\[
\mathbb E\bigl[Y(z_{1,0})^{p_0}\bigr]
=
\int_{\mathbb R^{2d}}Y(z)^{p_0} f_0(z) \di z
<\infty.
\]
We set
\[
\widetilde Y_i:=Y(z_{i,0})-M_0, \quad i \in [N].
\]
Then \((\widetilde Y_i)_{i=1}^N\) are independent centered random variables and $\mathbb E[|\widetilde Y_1|^{p_0}]<\infty.$ Since $\{ z_{i,0} \}$ are i.i.d., the shifted random variable $\{ \widetilde Y_i:=Y(z_{i,0})-M_0 \}$ is also i.i.d.. Moreover,
\[
\mathbb E[\widetilde Y_i]
=
\mathbb E[Y(z_{i,0})] -M_0
=
0.
\]
Since \(\mathbb E [Y(z_{1,0})^{p_0}]<\infty\), we also have
\begin{equation} \label{D-4-2}
\mathbb E[|\widetilde Y_1|^{p_0}] <\infty.
\end{equation}
Indeed, we have
\[
|\widetilde Y_1|^{p_0}
= \Big |Y(Z_1^0)-M_0 \Big |^{p_0}
\le 2^{p_0 - 1} \Big(Y(z_{1,0})^{p_0}+M_0^{p_0} \Big).
\]
We now use the von Bahr--Esseen inequality \cite{FP2017}. Since
\(1<p_0\le2\) and \(\widetilde Y_i\) are independent centered random variables,
\[
\mathbb E \left[ \left|\sum_{i=1}^N \widetilde Y_i\right|^{p_0} \right]
\le
C_{p_0}\sum_{i=1}^N\mathbb E \left[ |\widetilde Y_i|^{p_0} \right].
\]
Because the random variables are i.i.d., we have
\[
\sum_{i=1}^N\mathbb E[|\widetilde Y_i|^{p_0}]
=
N\mathbb E[|\widetilde Y_1|^{p_0}].
\]
Therefore, we have
\[
\mathbb E \left[ \left|
\frac1N\sum_{i=1}^N\widetilde Y_i
\right|^{p_0} \right]
=
\frac1{N^{p_0}}
\mathbb E \left[ \left|
\sum_{i=1}^N\widetilde Y_i
\right|^{p_0} \right]
\le
\frac{C_{p_0}}{N^{p_0}}
N\mathbb E[|\widetilde Y_1|^{p_0} ] = C_{p_0} N^{1-p_0} \mathbb E[|\widetilde Y_1|^{p_0} ].
\]
We use \eqref{D-4-2} and the above relation to find 
\begin{equation} \label{D-4-3}
\mathbb E \left[ \left|
\frac1N\sum_{i=1}^N\widetilde Y_i
\right|^{p_0} \right]
\le
C_{p_0} N^{-(p_0-1)}.
\end{equation}
Now, we use a generalized Markov's inequality and \eqref{D-4-3} to obtain
\begin{align}
\begin{aligned} \label{D-4-4}
\mathbb P(\mathcal B_N)
	&=
	\mathbb P\left(
	\frac1N\sum_{i=1}^NY(Z_i^0)>2M_0
	\right) \le
	\mathbb P\left(
	\left|
	\frac1N\sum_{i=1}^N\widetilde Y_i
	\right|
	>M_0
	\right) \\
	&\le 
	\frac{{\mathbb E} \left[ \left|
	\frac1N\sum_{i=1}^N\widetilde Y_i
	\right|^{p_0} \right]}{M_0^{p_0}} \leq C_{p_0, M_0} N^{-(p_0-1)}.
\end{aligned}
\end{align}
Note that since $p_0 > 1$, the right-hand side of \eqref{D-4-4} tends to zero, as $N \to \infty$. 
\end{proof}
Before we proceed with Step B in the proof of Theorem \ref{T3.2}, we provide the following moment estimates.
\begin{lemma} \label{L5.2}
Let  $f$ be a global weak solution to \eqref{A-2} with initial datum $f_0$ satisfying the following finite conditions:
\[ \int_{\bbr^{2d}} f_0(z) \di z = 1, \quad  \int_{\bbr^{2d}} (  |x|^2  + |v|^2)  f_0(z) \di z < \infty. \]
Then, we have the following estimates:
\begin{align*}
\begin{aligned}
& (i)~ \int_{\bbr^{2d}} |v|^2 f(t, z) \di z \leq \int_{\bbr^{2d}} |v|^2 f_0(z) \di z. \\
& (ii)~ \int_{\bbr^{2d}} |v| f(t, z) \di z \leq \Big( \int_{\bbr^{2d}} |v|^2 f_0(z) \di z \Big)^{\frac{1}{2}}.  \\
& (iii)~ \int_{\bbr^{2d}} |x|^2 f(t,z) \di z \leq 2 \Big(  \int_{\bbr^{2d}} |x|^2 f_0(z) \di z  + t^2   \int_{\bbr^{2d}} |v|^2 f_0(z) \di z       \Big).  \\
& (iv)~ \int_{\bbr^{2d}} |x| f(t,z) \di z \leq \sqrt{2}  \Big[  \Big( \int_{\bbr^{2d}} |x|^2 f_0(z) \di z \Big)^{\frac{1}{2}} + t  \Big(  \int_{\bbr^{2d}} |v|^2 f_0(z) \di z \Big)^{\frac{1}{2}} \Big].
\end{aligned}
\end{align*}
\end{lemma}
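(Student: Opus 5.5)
The plan is to work along characteristics, using the push-forward representation \eqref{B-2-0}, $f(t)=\Phi^t_\# f_0$. The whole argument rests on one observation: along each characteristic $(X(t),V(t))$ of \eqref{B-1}, the speed $|V(t)|$ is dominated by an average of the initial speeds. We then convert velocity bounds into spatial bounds via $\dot X = V$.

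For (i), differentiate $|V(t)|^2$ along \eqref{B-1}:
\[
\frac{\di}{\di t}|V|^2 = -2\kappa \int_{\bbr^{2d}} \phi(|X-x_*|)\,V\cdot(V-v_*)\,f(t,z_*)\di z_* .
\]
Integrating against $f_0(z)\di z$ and rewriting the inner integral through $f(t)$ gives a double integral over $f(t)\otimes f(t)$. Symmetrizing $(z,z_*)\leftrightarrow(z_*,z)$ exactly as in Lemma \ref{L2.1}(i) yields
\[
\frac{\di}{\di t}\int |v|^2 f(t,z)\di z = -\kappa \iint \phi(|x-x_*|)\,|v-v_*|^2 f(t,z)f(t,z_*)\di z\di z_* \le 0 .
\]
An alternative route is to cite Lemma \ref{L2.4}(iii) with $D=2$. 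That lemma also needs $\int |x|^2 f_0 \di z <\infty$, which is assumed here. Either way, (i) follows after integrating in time.

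Part (ii) is then Cauchy--Schwarz together with $\int f(t)\di z = \int f_0 \di z = 1$ (Lemma \ref{L2.2}(i)) and (i).

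For (iii), write $X(t)=x+\int_0^t V(s)\di s$. Then
\[
|X(t)|^2 \le 2|x|^2 + 2\Big(\int_0^t|V(s)|\di s\Big)^2 \le 2|x|^2 + 2t\int_0^t |V(s)|^2\di s ,
\]
by Cauchy--Schwarz in time. Integrate against $f_0$ and use Lemma \ref{L2.2}(iii) with $D=2$, namely $\int |V(s)|^2 f_0 \di z = \int |v|^2 f(s)\di z$, which is bounded by (i). This gives $2t\cdot t\int|v|^2 f_0\di z$, i.e. (iii).

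For (iv), apply Cauchy--Schwarz with unit mass, $\int|x|f(t)\di z\le(\int|x|^2f(t)\di z)^{1/2}$, and then use $\sqrt{a+b}\le\sqrt a+\sqrt b$ on (iii). This gives exactly the stated $\sqrt2[\,\cdot\,]$ form.

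The main obstacle is justifying the time differentiation in (i) and the Fubini/symmetrization for a weak solution with only second moments. The test function $|v|^2$ is not compactly supported. I would handle this as the paper does elsewhere: work at the level of characteristics via Lemma \ref{L2.2}, or cite Lemma \ref{L2.4}(iii). If that fails, I would truncate with a cutoff $\chi(|v|/n)|v|^2$, use the $L^\infty$ bound on $f$ and the finite moments for dominated convergence, and let $n\to\infty$.
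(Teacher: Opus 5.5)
Your proof is correct. Parts (i), (ii) and (iv) match the paper's, which simply cites Lemma \ref{L2.4}(iii) for (i), so your characteristic symmetrization is an optional extra whose justification issues you rightly flag.

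The genuine difference is in (iii). The paper argues in Eulerian form:
\begin{itemize}
\item it multiplies \eqref{A-2} by $|x|^2$ and integrates to get $\frac{\di}{\di t}\int |x|^2 f\,\di z = 2\int x\cdot v f\,\di z$;
\item it bounds the right side by Cauchy--Schwarz and (i);
\item it integrates the resulting differential inequality for $\bigl(\int|x|^2 f\,\di z\bigr)^{1/2}$.
\end{itemize}
This yields the sharper intermediate bound $\bigl[(\int|x|^2f_0\,\di z)^{1/2}+t(\int|v|^2f_0\,\di z)^{1/2}\bigr]^2$, which it then relaxes to the stated form.

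You instead work along characteristics, writing $X(t)=x+\int_0^tV(s)\,\di s$ and integrating the pointwise bound against $f_0$ via the push-forward identity. This is exactly the argument the paper itself uses in Appendix \ref{App-A} to prove \eqref{C-8}.

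Your route needs only two ingredients: the push-forward identity, extended to the weight $|x|^2$ by monotone convergence, and Tonelli for nonnegative integrands. It therefore avoids three things the paper's computation tacitly requires: integration by parts against the unbounded weight $|x|^2$, a priori finiteness of $\int|x|^2f(t)\,\di z$, and time-differentiation of its square root. The price is that using $(a+b)^2\le 2a^2+2b^2$ loses the factor $2$ up front. If you instead apply Minkowski's inequality in $L^2(f_0\,\di z)$ to $X(t)=x+\int_0^t V(s)\,\di s$, your route recovers the paper's sharper bound as well.
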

\begin{proof} By the conservation of total mass in Proposition \ref{P2.2}, we have
\[    \int_{\bbr^{2d}} f(t, z) \di z = \int_{\bbr^{2d}} f_0(z) \di z = 1, \quad t > 0.  \]
\noindent $\bullet$ (Derivation of (i) and (ii)): The first estimate follows from Lemma \ref{L2.4} (iii). The second estimate follows from the dissipative estimate of total energy in Lemma \ref{L2.4} (iii) using the Cauchy--Schwarz inequality to get 
\[
  \int_{\bbr^{2d}} |v| f(t, z) \di z \leq \Big( \int_{\bbr^{2d}} f(t,z) \di z \Big)^{\frac{1}{2}} \Big( \int_{\bbr^{2d}} |v|^2 f(t, z) \di z \Big)^{\frac{1}{2}} \leq  \Big( \int_{\bbr^{2d}} |v|^2 f_0(z) \di z \Big)^{\frac{1}{2}}.
\]
\noindent $\bullet$ (Derivation of (iii)):~We multiply $|x|^2$ to \eqref{A-2} to find 
\[
\partial_t (|x|^2 f)+ \nabla_x \cdot ( |x|^2 v f) + \nabla_v \cdot ( |x|^2 L[f]f) = 2 x \cdot v f.
\]
We integrate the above relation over $\bbr^{2d}$ using suitable decay at infinity in phase space to get 
\begin{equation}  \label{D-4-5}
\frac{\di }{\di t}\int_{\bbr^{2d}} |x|^2 f(t,z) \di z = 2 \int_{\bbr^{2d}} x \cdot v f(t,z) \di z.
\end{equation}
On the other hand, we use the Cauchy--Schwarz inequality and use Lemma \ref{L2.4} again to find 
\begin{align}
\begin{aligned} \label{D-4-6}
\int_{\bbr^{2d}} |x| |v| f(s,z) \di z &\leq  \Big(  \int_{\bbr^{2d}} |x|^2 f(s, z) \di z \Big)^{\frac{1}{2}} \Big(   \int_{\bbr^{2d}} |v|^2 f(s, z) \di z     \Big)^{\frac{1}{2}}  \\
&\leq \Big(  \int_{\bbr^{2d}} |x|^2 f(s, z) \di z \Big)^{\frac{1}{2}} \Big(   \int_{\bbr^{2d}} |v|^2 f_0(z) \di z \Big)^{\frac{1}{2}}.
\end{aligned}
\end{align}
We combine \eqref{D-4-5} and \eqref{D-4-6} to get 
\begin{align*}
\begin{aligned}
\frac{\di }{\di t}\int_{\bbr^{2d}} |x|^2 f(t,z) \di z \leq  2\Big(  \int_{\bbr^{2d}} |x|^2 f(t, z) \di z \Big)^{\frac{1}{2}} \Big(   \int_{\bbr^{2d}} |v|^2 f_0(z) \di z \Big)^{\frac{1}{2}}.
\end{aligned}
\end{align*}
This yields
\begin{equation} \label{D-4-7}
\frac{\di }{\di t} \Big (\int_{\bbr^{2d}} |x|^2 f(t,z) \di z \Big)^{\frac{1}{2}} \leq \Big(   \int_{\bbr^{2d}} |v|^2 f_0(z) \di z \Big)^{\frac{1}{2}}.
\end{equation}
We integrate \eqref{D-4-7} to get 
\[
 \Big (\int_{\bbr^{2d}} |x|^2 f(t,z) \di z \Big)^{\frac{1}{2}} \leq  \Big (\int_{\bbr^{2d}} |x|^2 f_0(z) \di z \Big)^{\frac{1}{2}} + t \Big(   \int_{\bbr^{2d}} |v|^2 f_0(z) \di z \Big)^{\frac{1}{2}}.
\]
This yields
\begin{align*}
\begin{aligned}
\int_{\bbr^{2d}} |x|^2 f(t,z) \di z &\leq \Big[     \Big (\int_{\bbr^{2d}} |x|^2 f_0(z) \di z \Big)^{\frac{1}{2}} + t \Big(   \int_{\bbr^{2d}} |v|^2 f_0(z) \di z \Big)^{\frac{1}{2}} \Big]^2 \\
&\leq 2 \Big[   \int_{\bbr^{2d}} |x|^2 f_0(z) \di z  + t^2   \int_{\bbr^{2d}} |v|^2 f_0(z) \di z         \Big ].
\end{aligned}
\end{align*}

\noindent $\bullet$ (Derivation of (iv)):~Again, we use the Cauchy--Schwarz inequality to get 
\begin{align*}
\begin{aligned}
\int_{\bbr^{2d}} |x| f(t, z) \di z &\leq \Big( \int_{\bbr^{2d}} |x|^2 f(t, z) \di z \Big)^{\frac{1}{2}} \Big( \int_{\bbr^{2d}}  f(t, z) \di z \Big)^{\frac{1}{2}} = \Big( \int_{\bbr^{2d}} |x|^2 f(t, z) \di z \Big)^{\frac{1}{2}} \\
& \leq \sqrt{2} \Big[  \int_{\bbr^{2d}} |x|^2 f_0(z) \di z  + t^2   \int_{\bbr^{2d}} |v|^2 f_0(z) \di z       \Big]^{\frac{1}{2}} \\
& \leq \sqrt{2}  \Big[  \Big( \int_{\bbr^{2d}} |x|^2 f_0(z) \di z \Big)^{\frac{1}{2}} + t  \Big(  \int_{\bbr^{2d}} |v|^2 f_0(z) \di z \Big)^{\frac{1}{2}} \Big],
\end{aligned}
\end{align*}
where we used $\sqrt{a + b} \leq \sqrt{a} + \sqrt{b},~\forall ~ a, b >0$. 
\end{proof}
With the preparations, we estimate the difference between empirical measure solution and measure-valued solution.
\begin{lemma} \label{L5.3}
For any $T \in (0, \infty)$, we assume that initial data $f_0$ and $Z_0$ satisfy \eqref{B-6}, and let  $f$ and \(\mu^N_t\)  be the weak solution to \eqref{A-2} and the empirical measure generated by  \eqref{A-1} with initial configuration $Z_0$ in the time-interval $[0, T)$, respectively. Then, there exists a random variable $\mathcal R_N$ such that 
\[
\sup_{0\le t \le T}W_1(\mu^N_t, \mu_t)
	\le
	\mathcal R_N  \quad \mbox{and} \quad 	\sup_{N\ge1}\mathbb E\bigl[\mathcal R_N^2\bigr]<\infty.
\]
\end{lemma}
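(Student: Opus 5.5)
The plan is to avoid any stability argument and bound $W_1(\mu^N_t,\mu_t)$ crudely by first moments. For $W_1$ we can use the trivial product coupling $\mu^N_t\otimes\mu_t$. With $|z-z_*|\le |z|+|z_*|$ this gives
\[
W_1(\mu^N_t,\mu_t)\le \int_{\mathbb R^{2d}}|z|\,\di\mu^N_t+\int_{\mathbb R^{2d}}|z|\,\di\mu_t
\le \frac1N\sum_{i=1}^N\bigl(|x_i(t)|+|v_i(t)|\bigr)+\int_{\mathbb R^{2d}}(|x|+|v|)f(t,z)\di z .
\]
We bound the two pieces separately and uniformly on $[0,T]$.

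For the particle part, Lemma \ref{L2.1}(ii) gives, for all $t\in[0,T]$,
\[
\frac1N\sum_{i=1}^N(|x_i(t)|+|v_i(t)|)\le (1+T)\Bigl[\frac1N\sum_{i=1}^N|x_{i,0}|+\Bigl(\frac1N\sum_{i=1}^N|v_{i,0}|^2\Bigr)^{1/2}\Bigr].
\]
For the kinetic part, Lemma \ref{L5.2}(ii),(iv) with $t\le T$ give a deterministic constant
\[
K_T:=\sqrt2\Bigl[\Bigl(\int|x|^2f_0\Bigr)^{1/2}+(1+T)\Bigl(\int|v|^2f_0\Bigr)^{1/2}\Bigr]+\Bigl(\int|v|^2f_0\Bigr)^{1/2}.
\]
This constant is finite by \eqref{B-6}(i), since $q>2$. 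We then define the random variable
\[
\mathcal R_N:=(1+T)\Bigl[\frac1N\sum_{i=1}^N|x_{i,0}|+\Bigl(\frac1N\sum_{i=1}^N|v_{i,0}|^2\Bigr)^{1/2}\Bigr]+K_T .
\]
It depends only on $Z_0$, and by construction $\sup_{0\le t\le T}W_1(\mu^N_t,\mu_t)\le\mathcal R_N$.

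For the second moment we use $(a+b+c)^2\le 3(a^2+b^2+c^2)$ and Jensen/Cauchy--Schwarz, $\bigl(\frac1N\sum|x_{i,0}|\bigr)^2\le\frac1N\sum|x_{i,0}|^2$. This yields
\[
\mathbb E[\mathcal R_N^2]\le 3(1+T)^2\,\mathbb E\Bigl[\frac1N\sum_{i=1}^N|x_{i,0}|^2+\frac1N\sum_{i=1}^N|v_{i,0}|^2\Bigr]+3K_T^2 .
\]
Since each $z_{i,0}$ has law $f_0$, the expectation equals $\int(|x|^2+|v|^2)f_0\di z$ by linearity. This bound does not depend on $N$, so $\sup_N\mathbb E[\mathcal R_N^2]<\infty$.

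No step is a serious obstacle. The only points to check are that Lemma \ref{L2.1}(ii) is applied pathwise; the particle system is a finite ODE with a globally Lipschitz-on-bounded-sets, linearly bounded field, so it is globally solvable. One also needs Lemma \ref{L5.2} to apply, which requires $f_0$ to have unit mass and finite second moments; both follow from \eqref{B-6}(i) since $f_0$ is a law.
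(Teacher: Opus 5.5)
Your proposal is correct and follows the paper's proof essentially step for step: you use the trivial-coupling first-moment bound for $W_1$, Lemma \ref{L2.1}(ii) for the particle part, Lemma \ref{L5.2} for the kinetic part, and the inequality $(a+b+c)^2\le 3(a^2+b^2+c^2)$ together with Jensen and identical distribution to get the uniform second moment. The differences are cosmetic. You keep the deterministic kinetic contribution as an explicit additive constant $K_T$, whereas the paper absorbs it into $C_T[1+\cdots]$. You also get finiteness of $\int|v|^2 f_0$ from the $q$-moment with $q>2$, whereas the paper uses the exponential velocity moment.
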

\begin{proof} We split its proof into two steps. \newline

\noindent $\bullet$~\textbf{Step A} (Identification of a candidate for ${\mathcal R}_N$):~For any probability measures \(\mu_1,\mu_2\in\mathcal P_1(\mathbb R^{2d})\),
we have
\begin{equation} \label{D-5-1}
W_1(\mu_1,\mu_2)
\le
\int_{\mathbb R^{2d}} |z| \di \mu_1(z)
+
\int_{\mathbb R^{2d}} |z| \di \mu_2(z).
\end{equation}
Now, we apply \eqref{D-5-1} with \( \mu_1=\mu^N_t \) and \(\mu_2= f(t, z) \di z\) to obtain
\begin{align}
\begin{aligned} \label{D-5-2}
& \sup_{0\le t \le T}W_1(\mu^N_t, \mu_t) \\
& \hspace{0.5cm} \le
	\sup_{0\le t \le T}
	\int_{\mathbb R^{2d}}(|x|+|v|) \di \mu^N_t +
	\sup_{0\le t \le T}
	\int_{\mathbb R^{2d}}(|x|+|v|) f(t,z) \di z =: {\mathcal I}_{31} + {\mathcal I}_{32}.
\end{aligned}
\end{align}
Below, we estimate the terms ${\mathcal I}_{3i},~i=1,2$ one by one. \newline

\noindent $\diamond$~Case 1 (Estimate of ${\mathcal I}_{31}$):~Note that 
\begin{equation} \label{D-5-3}
{\mathcal I}_{31} = \sup_{0\le t \le T}
	\int_{\mathbb R^{2d}}(|x|+|v|) \di \mu^N_t = \sup_{0\le t \le T}
\frac1N\sum_{i=1}^N(|x_i(t)|+|v_i(t)|).
\end{equation}
Then, we use Lemma \ref{L2.1} and \eqref{D-5-3} to get 
\begin{equation} \label{D-5-4}
{\mathcal I}_{31} \le
(1 + T)
\left[
\frac1N\sum_{i=1}^N|x_{i,0}| 
+
\left(\frac1N\sum_{i=1}^N |v_{i,0}|^2\right)^{1/2}
\right].
\end{equation}
\noindent $\diamond$~Case 2 (Estimate of ${\mathcal I}_{32}$):~We use Lemma \ref{L5.2} to get 
\begin{align}
\begin{aligned} \label{D-5-5}
{\mathcal I}_{32} &\leq  \sqrt{2} \Big( \int_{\bbr^{2d}} |x|^2 f_0(z) \di z \Big)^{\frac{1}{2}} +  (1 + \sqrt{2} t)  \Big(  \int_{\bbr^{2d}} |v|^2 f_0(z) \di z \Big)^{\frac{1}{2}} \\
&\leq \sqrt{2} (1 + T) \Bigg[  \Big( \int_{\bbr^{2d}} |x|^2 f_0(z) \di z \Big)^{\frac{1}{2}} +  \Big(  \int_{\bbr^{2d}} |v|^2 f_0(z) \di z \Big)^{\frac{1}{2}}  \Bigg].
\end{aligned}
\end{align}
In \eqref{D-5-2}, we combine estimates \eqref{D-5-4} and \eqref{D-5-5} to get 
\begin{equation} \label{D-5-6}
\sup_{0\le t\le T}W_1(\mu^N_t, \mu_t) \leq C_T \Big[  1 +   \frac1N\sum_{i=1}^N|x_{i,0}| + 
\left(\frac1N\sum_{i=1}^N |v_{i,0}|^2\right)^{1/2} \Big] =: {\mathcal R}_N.
\end{equation}
That is, we have
\begin{equation}\label{D-5-7}
	\sup_{0\le t\le T}W_1(\mu^N_t, \mu_t)
	\le
	\mathcal R_N.
\end{equation}

\vspace{0.2cm}

\noindent $\bullet$~\textbf{Step B} (Second moment estimate for ${\mathcal R}_N$):~Next, we verify that \(\mathcal R_N\) has a uniform second moment. Since
\[
\mathcal R_N
=
C_T
\left[
1+
\frac1N\sum_{i=1}^N|x_{i,0}|
+
\left(\frac1N\sum_{i=1}^N |v_{i,0}|^2\right)^{1/2}
\right],
\]
we use
\[
(a+b+c)^2\le 3(a^2+b^2+c^2),
\qquad \forall ~a, ~b, ~c\ge0,
\]
and linearity of expectation to get
\begin{equation} \label{D-5-8}
	\mathbb E[\mathcal R_N^2]
	\le
	3C_T^2
	\left[
	1+
	\mathbb E\left(\frac1N\sum_{i=1}^N |x_{i,0}|\right)^2
	+
	\mathbb E\left(\frac1N\sum_{i=1}^N |v_{i,0}|^2\right)
	\right].
\end{equation}
Next, we show that the second and third terms in \eqref{D-5-8} are finite. \newline

\noindent $\diamond$~Case 1: We claim that
\begin{equation} \label{D-5-8-1}
\mathbb E\left(\frac1N\sum_{i=1}^N |x_{i,0}|\right)^2 < \infty. 
\end{equation}
{\it (Derivation of \eqref{D-5-8-1})}:~By Jensen's inequality,
\[
\left(\frac1N\sum_{i=1}^N |x_{i,0}|\right)^2
\le
\frac1N\sum_{i=1}^N |x_{i,0}|^2.
\]
Taking expectation and using the fact that \( \{ z_{i,0} \} \) are identically distributed,
we obtain
\begin{equation} \label{D-5-9}
\mathbb E\left(\frac1N\sum_{i=1}^N |x_{i,0}|\right)^2
\le
\frac1N\sum_{i=1}^N\mathbb E|x_{i,0}|^2
=
\int_{\mathbb R^{2d}} |x|^2 f_0(z) \di z.
\end{equation}
Since \(q>2\), we have
\[
|x|^2\le 1+|x|^q.
\]
Hence
\begin{equation} \label{D-5-10}
\int_{\mathbb R^{2d}} |x|^2 f_0(z) \di z
\le
1+\int_{\mathbb R^{2d}} |x|^q f_0(z) \di z <\infty.
\end{equation}
We combine \eqref{D-5-9} and \eqref{D-5-10} to show the desired estimate \eqref{D-5-8-1}.  \newline

\noindent $\diamond$~Case 2: We claim that
\begin{equation} \label{D-5-10-1}
\mathbb E\left(\frac1N\sum_{i=1}^N |v_{i,0}|^2\right) < \infty.
\end{equation}
{\it (Derivation of \eqref{D-5-10-1})}: Note that 
\begin{equation} \label{D-5-11}
\mathbb E\left(\frac1N\sum_{i=1}^N |v_{i,0}|^2\right)
=
\frac1N\sum_{i=1}^N\mathbb E|v_{i,0}|^2
=
\int_{\mathbb R^{2d}} |v|^2 f_0(z) \di z.
\end{equation}
The exponential velocity moment implies the finiteness of the second velocity moment.
Indeed, for all \(r\ge0\),
\[
r^2\le C_\alpha e^{\alpha r}.
\]
Therefore, we have
\begin{equation} \label{D-5-12}
\int_{\mathbb R^{2d}} |v|^2 f_0(z) \di z
\le
C_\alpha
\int_{\mathbb R^{2d}} e^{\alpha |v|} f_0(z) \di z <\infty.
\end{equation}
We combine \eqref{D-5-11} and \eqref{D-5-12} to derive \eqref{D-5-10-1}.  Finally, we combine \eqref{D-5-8}, \eqref{D-5-8-1} and \eqref{D-5-10-1} to find the desired estimate:
\[
	\sup_{N\ge1}\mathbb E\bigl[\mathcal R_N^2\bigr]<\infty.
\]
\end{proof}

\subsection{Proof of Theorem \ref{T3.2}} \label{sec:5.2}
Now, we are ready to provide the proof of the second main result in several steps. Suppose that initial datum $f_0$, $\mu_0^N$, and random initial configuration $Z_0$ satisfy a set of well-prepared conditions \eqref{B-6}, and let  $f$ and \(\mu^N_t\)  be the weak solution to \eqref{A-2} and the empirical measure generated by  \eqref{A-1} with initial configuration $Z_0$. Recall that our goal is to derive an upper bound for \[\displaystyle \mathbb E\left[\sup_{0\le t \le T}W_1(\mu^N_t, \mu_t) \right]\] for a finite-time window which decays to zero as $N \to \infty$. More precisely, we will show that for every finite $T \in (0, \infty)$, there exist positive constants \(C_T\) and \(\eta_T\) such that
	\begin{equation}\label{D-6}
		\mathbb E\left[\sup_{0\le t\le T}W_1(\mu^N_t, \mu_t) \right]\le \frac{C_T}{N^{\eta_T}}.
	\end{equation}
For this,  note that 
\begin{equation}  \label{D-7}
 \mathbb E\left[
	\sup_{0\le t\le T}W_1(\mu^N_t, \mu_t)
	\right] =
	\mathbb E\left[
	\mathbf 1_{\mathcal G_N}
	\sup_{0\le t\le T}W_1(\mu^N_t, \mu_t)
	\right] +
	\mathbb E\left[
	\mathbf 1_{\mathcal B_N}
	\sup_{0\le t\le T}W_1(\mu^N_t, \mu_t)
	\right].
\end{equation}
The first term can be followed from Lemma \ref{L5.1}:
\begin{equation} \label{D-8}
\mathbb E\left[
	\mathbf 1_{\mathcal G_N}
	\sup_{0\le t\le T}W_1(\mu^N_t, \mu_t)
	\right]  \leq C_{T,M }N^{-\eta \Lambda_{T,M}}.
\end{equation}
For the second term in the right-hand side of \eqref{D-7}, we use Lemma \ref{L5.1}, Lemma \ref{L5.3} and the Cauchy--Schwarz inequality to get 
\begin{equation} \label{D-9}
 \mathbb E\left[
	\mathbf 1_{\mathcal B_N}
	\sup_{0\le t\le T}W_1(\mu^N_t, \mu_t)
	\right] \le
	\mathbb E\left[
	\mathbf 1_{\mathcal B_N}\mathcal R_N
	\right]
	\le
	\mathbb P(\mathcal B_N)^{1/2}
	\left(
	\mathbb E[\mathcal R_N^2]
	\right)^{1/2}
	\le
	C_T N^{-\frac{p_0-1}{2}}.
\end{equation}
In \eqref{D-7}, we use estimates \eqref{D-8} and \eqref{D-9} to get 
\begin{align*}
\mathbb E\left[
	\sup_{0\le t\le T}W_1(\mu^N_t, \mu_t)
	\right] \le
	C_TN^{-\eta \Lambda_{T,M_0}}
	+
	C_TN^{-\frac{p_0-1}{2}}.
\end{align*}
Therefore, we set 
\[
\eta_T
:=
\min\left\{
\eta \Lambda_{T,M_0},
\frac{p_0-1}{2}
\right\},
\]
to obtain
\[
\mathbb E\left[
\sup_{0\le t\le T}W_1(\mu^N_t, \mu_t)
\right]
\le
C_TN^{-\eta_T}.
\]
Since
\[
p_0=\min\left\{\frac q2,2\right\},
\]
the constant $\eta_T$ can be rewritten as 
\[
\eta_T
=
\min\left\{
\eta\Lambda_{T,M_0},~
\frac12\left(\min\left\{\frac q2,2\right\}-1\right)
\right\}.
\]
This completes the proof of Theorem \ref{T3.2}.
\begin{remark}\label{R5.1}
In the process of proof, we choose the constant $\eta_T$ as follows:
	\[
	\eta_T
	=
	\min\left\{
	\eta \Lambda_{T,M_0},
	\frac12\left(\min\left\{\frac q2,2\right\}-1\right)
	\right\},
	\]
	where
	\[
	M_0
	:=
	\int_{\mathbb R^{2d}}
	\left(|x|^2+e^{\frac\alpha2 |v|}\right) f_0(z) \di z,
	\quad
	\Lambda_{T,M_0}=e^{-C_{T,2M_0}T}.
	\]
	Equivalently,
	\[
	\eta_T
	=
	\begin{cases}
		\displaystyle
		\min\left\{\eta \Lambda_{T,M_0},\ \dfrac{q-2}{4}\right\},
		& 2<q<4,\\[8pt]
		\displaystyle
		\min\left\{\eta \Lambda_{T,M_0},\ \dfrac12\right\},
		& q\ge4.
	\end{cases}
	\]
	This exponent should be understood as a quantitative rate produced by the present
	argument, not as an optimal rate. The loss in the first term is due to the
	Osgood-type stability modulus in the phase-spatially extended setting, while the second
	term is produced by the good/bad event decomposition and the estimates on
	\(\mathcal B_N\).
\end{remark}
\section{Conclusion}\label{sec:6}
In this paper, we have established finite-time weak stability and random mean-field approximation of the
KCS model in the fully phase-spatially extended setting. Unlike the
classical phase-spatially confined setting, the absence of compact velocity support makes
the alignment force to be non-uniformly Lipschitz along particle trajectories. To
overcome this difficulty, we have introduced a velocity-truncation argument combined
with exponential velocity-tail estimates. These yield a finite-time Osgood-type weak
stability in \(W_1\)-distance for solutions with finite spatial
second moments and exponential velocity moments. As an application of this weak stability, we have also established random
mean-field approximation for empirical measures with non-compact initial
configurations. We also verified an i.i.d. sampling result, showing that random empirical measures generated from random initial data converge to the kinetic
solution on every finite time interval, with an explicit algebraic rate. These
results extend the classical compact-support mean-field theory to a fully
non-compact phase-space framework. Of course, there are several open questions. In particular, it would be interesting to extend
the present method to singular communication weights and to stochastic
nonlinear CS-type models with white noises in the whole space. We leave these interesting questions for future work. 
\section*{Conflict of interest statement}
The authors declare no conflicts of interest.
\section*{Data availability statement}
The data supporting the findings of this study are available from the corresponding author upon reasonable request.
\section*{Ethical statement}
The authors declare that this manuscript is original, has not been published before, and is not currently being considered for publication elsewhere. The study was conducted by the principles of academic integrity and ethical research practices. All sources and contributions from others have been properly acknowledged and cited. The authors confirm that there is no fabrication, falsification, plagiarism, or inappropriate manipulation of data in the manuscript.
\vspace{1cm}

\appendix

\section{Proof of  Lemma \ref{L4.1} } \label{App-A}
\setcounter{equation}{0}

\noindent (ii)~Since \(\phi\in W^{1,\infty}\), we have
\[
\bigl|
\phi(|X_f-X_{f,*}|)
-
\phi(|X_g-X_{g,*}|)
\bigr|
\le
\|\phi'\|_{L^\infty}
\Bigl(
|X_f-X_g|+|X_{f,*}-X_{g,*}|
\Bigr).
\]
This yields
\begin{align}
\begin{aligned} \label{C-7}
	| {\mathcal I}_{12}|
	&\le
	\|\phi'\|_{L^\infty}
	\Bigl(
	|X_f-X_g|+|X_{f,*}-X_{g,*}|
	\Bigr)
	\Bigl(
	|V_g|+|V_{g,*}|
	\Bigr) \\
	&= \|\phi'\|_{L^\infty} \Big( |X_f-X_g|  \cdot  |V_g|  +  |X_f-X_g|  \cdot |V_{g,*}|  \\
	& \hspace{0.4cm} +|X_{f,*}-X_{g,*}| \cdot |V_g| +  |X_{f,*}-X_{g,*}|  \cdot   |V_{g,*}|  \Big) \\
	&=:  \|\phi'\|_{L^\infty} \Big( {\mathcal I}_{121} +{\mathcal I}_{122} +{\mathcal I}_{123} +  {\mathcal I}_{124} \Big).
\end{aligned}
\end{align}
By the exchange symmetry $(z,\bar z)~~\Longleftrightarrow~~(z_*,\bar z_*)$, the estimates for $\iiiint {\mathcal I}_{121}$ and  $\iiiint {\mathcal I}_{124}$ are the same. Similarly, the estimates for $\iiiint {\mathcal I}_{122}$ and  $\iiiint {\mathcal I}_{123}$ are the same. \newline

\noindent $\bullet$ Case A (Estimates for $\iiiint {\mathcal I}_{122}$ and $\iiiint {\mathcal I}_{123}$): By direct calculation, we have
\begin{align}
\begin{aligned} \label{C-7-1}
&  \|\phi'\|_{L^\infty} \iiiint_{\bbr^{8d}} {\mathcal I}_{122} \,\di \pi_0(z,\bar z) \,\di \pi_0(z_*,\bar z_*)  \\
& \hspace{1cm}  =   \|\phi'\|_{L^\infty}  \iiiint_{\bbr^{8d}}  |X_f-X_g|  \cdot |V_{g,*}|  \,\di \pi_0(z,\bar z) \,\di \pi_0(z_*,\bar z_*)   \\
& \hspace{1cm} =  \|\phi'\|_{L^\infty}  \Big( \iint_{\bbr^{4d}}  |V_{g,*}| \di \pi_0(z_*,\bar z_*) \Big) \Big(   \iint_{\bbr^{4d}}  |X_f-X_g| \di \pi_0(z,\bar z) \Big).
\end{aligned}
\end{align}
For the first factor in the right-hand side of \eqref{C-7-1}, we use the Cauchy--Schwarz inequality and the fact that $g_0$ is the ${\bar z}_*$-marginal of the coupling measure $\pi$, we have
\begin{align}
\begin{aligned} \label{C-7-2}
& \iint_{\bbr^{4d}} |V_{g,*}(t,\bar z_*)|\,\di \pi_0(z_*,\bar z_*) \\
& \hspace{1cm} =
\int_{\bbr^{2d}} |V_{g,*}(t,\bar z_*)| g_0(\bar z_*) \di {\bar z}_*
\le
\left(
\int_{\bbr^{2d}} |V_g(t, \bar z_*)|^2 g_0(\bar z_*) \di {\bar z_*}
\right)^{1/2}.
\end{aligned}
\end{align}
It follows from Lemma \ref{L2.2} and Lemma \ref{L2.3} that 
\begin{equation} \label{C-7-3}
\int_{\mathbb R^{2d}} |V_g(t,\bar z_*)|^2 g_0(\bar z_*) \di {\bar z}_*
=
\int_{\mathbb R^{2d}} |v_*|^2 g(t,z_*)\,\di z_*
\le
\int_{\mathbb R^{2d}} |v_*|^2 g_0(z_*)\,\di z_*
\le M.
\end{equation}
On the other hand, the second factor satisfies 
\begin{equation} \label{C-7-4}
  \iint_{\bbr^{4d}}  |X_f-X_g| \di \pi_0(z,\bar z) \leq \Delta(t).
\end{equation}
In \eqref{C-7-1}, we combine all the estimates \eqref{C-7-2}, \eqref{C-7-3} and \eqref{C-7-4} to get 
\begin{equation} \label{C-7-5}
  \|\phi'\|_{L^\infty} \iiiint_{\mathbb R^{8d}} {\mathcal I}_{122} \,\di \pi_0(z,\bar z) \,\di \pi_0(z_*,\bar z_*)  \leq  \|\phi'\|_{L^\infty} \sqrt{M} \Delta.
\end{equation}
Similarly, we have
\begin{equation} \label{C-7-6}
 \|\phi'\|_{L^\infty} \iiiint_{\mathbb R^{8d}} {\mathcal I}_{123} \,\di \pi_0(z,\bar z) \,\di \pi_0(z_*,\bar z_*)  \leq  \|\phi'\|_{L^\infty} \sqrt{M} \Delta.
\end{equation}
\vspace{0.2cm}

\noindent $\bullet$ Case B (Estimates for $\iiiint {\mathcal I}_{121}$ and $\iiiint {\mathcal I}_{124}$):~For $R \geq 1$, note that 
\begin{align}
\begin{aligned} \label{C-7-9}
& \|\phi'\|_{L^\infty} \iiiint_{\mathbb R^{8d}} {\mathcal I}_{121} \,\di \pi_0(z,\bar z) \,\di \pi_0(z_*,\bar z_*) \\
&  \hspace{1cm} =    \|\phi'\|_{L^\infty}  \iiiint_{\mathbb R^{8d}}  |X_f-X_g|  \cdot |V_{g}|  \,\di \pi_0(z,\bar z) \,\di \pi_0(z_*,\bar z_*)  \\
&  \hspace{1cm} =   \|\phi'\|_{L^\infty}  \iint_{\mathbb R^{4d}}  |X_f-X_g|  \cdot |V_{g}|  \,\di \pi_0(z,\bar z) \\
&  \hspace{1cm} =   \|\phi'\|_{L^\infty}  \Big( \iint_{\{|V_g|\le R\}} |V_g|\,|X_f-X_g|\,\di \pi_0(z, \bar z)
	 \\
& \hspace{4cm}	 + 
	\iint_{\{|V_g|>R\}} |V_g|\,|X_f-X_g|\,\di \pi_0(z,\bar z) \Big) \\
& \hspace{1cm}  =: {\mathcal J}_{1211} +  {\mathcal J}_{1212}.
\end{aligned}
\end{align}
In the sequel, we estimate the terms ${\mathcal J}_{121i},~i=1,2$ as follows. \newline

\noindent $\diamond$~Case B.1 (Estimate of ${\mathcal J}_{1211} $):~We use the relation $|V_g|\le R$ over the integral domain to find 
\begin{equation}\label{C-7-10}
	{\mathcal J}_{1211}
	\le  \|\phi'\|_{L^\infty} 
	R\iint_{\mathbb R^{4d}} |X_f-X_g|\,\di \pi_0(z, \bar z)
	\le
	 \|\phi'\|_{L^\infty}  R\Delta.
\end{equation}
\noindent $\diamond$~Case B.2 (Estimate of ${\mathcal J}_{1212}$):~In this case, we use the exponential decay of $|V_g|$ in the integral domain. For this, we use
\[
|X_f-X_g|\le |X_f|+|X_g|
\]
to find 
\begin{align}
\begin{aligned} \label{C-7-11}
	{\mathcal J}_{1212}
	&\le  \|\phi'\|_{L^\infty} 
	\int_{\{|V_g|>R\}} |V_g|\,|X_f|\,\di \pi_0(z, \bar z)
	+  \|\phi'\|_{L^\infty} 
	\int_{\{|V_g|>R\}} |V_g|\,|X_g|\,\di \pi_0(z, \bar z) \\
	& =: {\mathcal J}_{12121} + {\mathcal J}_{12122}.
\end{aligned}
\end{align}
In what follows, we estimate the term ${\mathcal J}_{1212i},~i=1,2$ one by one. We use the Cauchy--Schwarz inequality to find 
\begin{align}\label{C-7-12}
	\begin{aligned}
	 {\mathcal J}_{12121}
		&\le  \|\phi'\|_{L^\infty} 
		\left(
		\int_{\{|V_g|>R\}} |V_g|^2\,\di \pi_0(z,\bar z)
		\right)^{1/2}
		\left(
		\int_{\bbr^{4d}} |X_f|^2\,\di \pi_0(z, \bar z)
		\right)^{1/2},
		\\
		 {\mathcal J}_{12122} 
		&\le  \|\phi'\|_{L^\infty} 
		\left(
		\int_{\{|V_g|>R\}} |V_g|^2\,\di \pi_0(z, \bar z)
		\right)^{1/2}
		\left(
		\int_{\bbr^{4d}} |X_g|^2\,\di \pi_0(z, \bar z)
		\right)^{1/2}.
	\end{aligned}
\end{align}
\noindent $\clubsuit$~Case B.2.1 (Estimate on the first factor in \eqref{C-7-12}):~First, note that for some \(\alpha_1\in(0,\alpha)\), we use Lemma \ref{L2.2} and Lemma \ref{L2.3} to find 
\begin{align}
\begin{aligned} \label{C-7-13}
M &\geq \int_{\bbr^{2d}} e^{\alpha |V_g(t,\bar z)|} g_0(\bar z) \di {\bar z} \geq \int_{\{|V_g|>R \}} e^{\alpha |V_g(t,\bar z)|} g_0(\bar z) \di {\bar z} \\
& =  \int_{\{|V_g|>R \}} e^{(\alpha - \alpha_1) |V_g(t,\bar z)|}  e^{\alpha_1 |V_g(t,\bar z)|}  g_0(\bar z) \di {\bar z} \\
& \geq   e^{(\alpha - \alpha_1) R} \int_{\{|V_g|>R \}} e^{\alpha_1 |V_g(t,\bar z)|}  g_0(\bar z) \di {\bar z},
\end{aligned}
\end{align}
i.e., we have
\begin{equation}\label{C-7-14}
\sup_{0\le t\le T} \int_{\{|V_g|>R \}} e^{\alpha_1 |V_g(t,\bar z)|}  g_0(\bar z) \di {\bar z} \leq M e^{-(\alpha - \alpha_1) R}.
\end{equation}
Then, we use \eqref{C-7-14} and the pointwise bound
\[ |V_g(t,\bar z)|^2\leq  \frac{2}{\alpha_1^2} e^{\alpha_1 |V_g(t,\bar z)|}
\]
to see
\begin{align}
\begin{aligned} \label{C-7-15}
& \int_{ \{ |V_g(t, \bar z)| \geq R \}}   |V_g(t,\bar z)|^2 g_0(\bar z) \di {\bar z} \\
& \hspace{1cm} \leq   \frac{2}{\alpha_1^2} \int_{ \{ |V_g(t, \bar z)| \geq R \}}  e^{\alpha_1 |V_g(t,\bar z)|  }    g_0(\bar z) \di {\bar z} \leq  \frac{2 M}{\alpha_1^2}  e^{-(\alpha - \alpha_1) R}.
 \end{aligned}
 \end{align}
 \noindent $\clubsuit$~Case B.2.2 (Estimate on the second factors in \eqref{C-7-12}):~Note that 
\begin{align}
\begin{aligned} \label{C-7-16}
& \int_{\bbr^{4d}} |X_f(t,z)|^2\,\di \pi_0(z, \bar z) = \int_{\bbr^{2d}} |X_f(t,z)|^2\ f_0(z) \di z, \\
& \int_{\bbr^{4d}} |X_g(t,\bar z)|^2\,\di \pi_0(z, \bar z) =  \int_{\bbr^{2d}} |X_g(t,\bar z)|^2\ g_0(\bar z) \di \bar z.
\end{aligned}
\end{align}
We claim that there exists a positive constant $\tilde{C}$ such that 
\begin{align}
\begin{aligned} \label{C-8}
& \sup_{0\le t\le T} \max \Bigg \{
	\int_{\bbr^{2d}} |X_f(t,z)|^2 f_0(z) \di z, 
	\int_{\bbr^{2d}} |X_g(t,\bar z)|^2\  g_0(\bar z) \di {\bar z}
	\Bigg \}  \\
& \hspace{2cm} \le 2M \Big(1 + \frac{2T^2 }{\alpha^2} \Big)=:\tilde{C}.
\end{aligned}
\end{align}
{\it Proof of \eqref{C-8}}:~By the
characteristic equation \eqref{C-2}, we have
\[
X_f(t,z)=x+\int_0^t V_f(s,z)\,\di s.
\]
This yields
\begin{equation}  \label{C-8-1}
|X_f(t,z)|^2 \leq \Big(|x| +\int_0^t |V_f(s,z)| \di s \Big)^2 \leq 2 |x|^2 +  2\Big| \int_0^t |V_f(s,z)|  \di s \Big|^2.
\end{equation}
To treat the second term in the right-hand side of \eqref{C-8-1}, we use Jensen's inequality to find
\begin{equation} \label{C-8-2}
\Big| \int_0^t |V_f(s,z)|  \di s \Big|^2 \leq t   \int_0^t |V_f(s,z)|^2 \di s.
\end{equation}
We apply \eqref{C-8-1} and \eqref{C-8-2} to get 
\begin{equation} \label{C-8-3}
|X_f(t,z)|^2
\le
2|x|^2+2t\int_0^t |V_f(s,z)|^2\,\di s.
\end{equation}
We integrate \eqref{C-8-3} with respect to \( f_0(z) \di z\) and use Fubini's theorem to obtain
\begin{equation} \label{C-8-4}
\int_{\bbr^{2d}} |X_f(t,z)|^2 f_0(z) \di z
\le
2\int_{\bbr^{2d}} |x|^2\  f_0(z) \di z
+
2t\int_0^t\int_{\bbr^{2d}} |V_f(s,z)|^2 f_0(z) \di z\di s.
\end{equation}
On the other hand, it follows from \eqref{C-1} that 
\[
\frac{\alpha^2 }{2} \int_{\mathbb R^{2d}} |v|^2 f_0(z)\di z \leq \int_{\mathbb R^{2d}}\bigl(|x|^2+e^{\alpha |v|}\bigr) f_0(z)\di z \leq M, 
\]
i.e.,
\begin{equation} \label{C-8-5}
\int_{\mathbb R^{2d}} |v|^2 f_0(z)\di z \leq \frac{2M}{\alpha^2}.
\end{equation}
By Lemma \ref{L2.2}, Lemma \ref{L2.3} and \eqref{C-8-5}, one has 
\begin{equation} \label{C-8-6}
\int_{\mathbb R^{2d}} |V_f(s,z)|^2\  f_0(z) \di z
=
\int_{\mathbb R^{2d}} |v|^2 f(s,z)\,\di z
\le
\int_{\mathbb R^{2d}} |v|^2 f_0(z) \,\di z \leq \frac{2M}{\alpha^2}.
\end{equation}
Now, we combine \eqref{C-8-4} and \eqref{C-8-6} to get 
\begin{align}
\begin{aligned} \label{C-8-7}
\int_{\mathbb R^{2d}} |X_f(t,z)|^2\ f_0(z) \di z &\leq  
2\int_{\mathbb R^{2d}} |x|^2\ f_0(z) \di z
+ \frac{4M t^2 }{\alpha^2} \leq 2M \Big(1 
+ \frac{2T^2 }{\alpha^2} \Big).
\end{aligned}
\end{align}
Similarly, we have
\begin{equation} \label{C-8-8}
\int_{\mathbb R^{2d}} |X_g(t,\bar z)|^2\  g_0(\bar z) \di {\bar z}
\le 2M \Big(1 + \frac{2T^2 }{\alpha^2} \Big).
\end{equation}
Then, we combine \eqref{C-8-7} and \eqref{C-8-8} to get the desired estimate \eqref{C-8}.  

Finally, we collect all the estimates \eqref{C-7-11}, \eqref{C-7-12}, \eqref{C-7-15} and \eqref{C-8} to get 
\begin{equation}\label{C-8-9}
{\mathcal J}_{1212}(t) \le \frac{4M  \|\phi'\|_{L^\infty} }{\alpha_1} \sqrt{ 1 + \frac{2T^2}{\alpha^2}} e^{-\frac{\alpha - \alpha_1}{2} R}.
\end{equation}
Therefore, we have
\begin{align}
\begin{aligned} \label{C-8-99}
 & \|\phi'\|_{L^\infty} \iiiint_{\mathbb R^{8d}} {\mathcal I}_{121} \,\di \pi_0(z,\bar z) \,\di \pi_0(z_*,\bar z_*) \\
 & \hspace{1cm} \leq  \|\phi'\|_{L^\infty}  R\Delta +  \frac{4M  \|\phi'\|_{L^\infty} }{\alpha_1} \sqrt{ 1 + \frac{2T^2}{\alpha^2}} e^{-\frac{\alpha - \alpha_1}{2} R}.
\end{aligned}
\end{align}
Similarly we have
\begin{align}
\begin{aligned} \label{C-8-10}
 & \|\phi'\|_{L^\infty} \iiiint_{\mathbb R^{8d}} {\mathcal I}_{124} \,\di \pi_0(z,\bar z) \,\di \pi_0(z_*,\bar z_*) \\
 & \hspace{1cm} \leq  \|\phi'\|_{L^\infty}  R\Delta +  \frac{4M  \|\phi'\|_{L^\infty} }{\alpha_1} \sqrt{ 1 + \frac{2T^2}{\alpha^2}} e^{-\frac{\alpha - \alpha_1}{2} R}.
\end{aligned}
\end{align}
In \eqref{C-7}, we combine \eqref{C-7-5}, \eqref{C-7-6}, \eqref{C-8-99} and \eqref{C-8-10} to get the desired estimate:
\[
| {\mathcal I}_{12}| \leq  2\|\phi'\|_{L^\infty} \Big( \sqrt{M} \Delta +   R\Delta + \frac{4M}{\alpha_1} \sqrt{ 1 + \frac{2T^2}{\alpha^2}} e^{-\frac{\alpha - \alpha_1}{2} R} \Big).
\]

\end{document}